\newcommand{\Z}{\mathbb{Z}}
\newcommand{\Q}{\mathbb{Q}}
\newcommand{\lcm}{\text{\rm lcm}}
\newtheoremstyle{dotless}{}{}{\itshape}{}{\bfseries}{}{ }{} 
\theoremstyle{}
\newtheorem{theorem}{Theorem}
\newtheorem{lemma}{Lemma}[section]
\newtheorem{prop}{Proposition}
\newtheorem{corollary}{Corollary}
\newtheorem{remark}{Remark}
\title{Classifying linear division sequences}
\author{Andrew Granville}
 \address{D{\'e}partment  de Math{\'e}matiques et Statistique,   Universit{\'e} de Montr{\'e}al, CP 6128 succ Centre-Ville, Montr{\'e}al, QC  H3C 3J7, Canada.}
   \email{andrew.granville@umontreal.ca}
\dedicatory{In memory of Richard K.~Guy, 1916-2020}
\thanks{Many thanks to Yann Bugeaud, Pietro Corvaja, Henri Darmon, Youness Lamzouri, Florian Luca, Alina Ostafe, Attila Peth\H{o}, Carl Pomerance, Hugh C. Williams and Umberto Zannier for useful email conversations, and particularly Carlo Pagano  for help  applying the \v Cebotarev density theorem. \indent
Richard Guy was an avid collector and popularizer of inspiring mathematical problems \cite{UPINT, UPIG, UPIC}. While collaborating on \cite{Guy}, an article memorializing the life and work of Richard Guy, I took the opportunity to re-read several of Guy's papers, and found myself   compelled  to revisit linear division sequences, leading to these results.  \newline  \indent 
The author is partially supported by grants from NSERC (Canada). }
\begin{document} 

\begin{abstract}
We classify  all linear division sequences in the integers, a problem going back to at least the 1930s. 
As a corollary we also classify those linear recurrence sequences in the integers for which $(x_m,x_n)=\pm x_{(m,n)}$. 
We also show that if two linear division sequences have a large common factor infinitely often then they are each divisible by a common linear division sequence on some arithmetic progression. Moreover our proofs also work for polynomials. The key to our proofs are Ritt's irreducibility theorem and the subspace theorem (of Schmidt and Schlickewei), in a direction developed by Bugeaud, Corvaja and Zannier.
\end{abstract}

\maketitle

\section{Introduction}  \label{sec: intro}

\subsection{Definitions and the development of a problem}             
A \emph{linear recurrence sequence}
$(u_n)_{n\geq 0}$ of integers (or rationals, or algebraic integers, or polynomials,...) satisfies a recurrence
  \begin{equation} \label{eq: RecRel}
 u_n=c_1u_{n-1}+\cdots + c_k u_{n-k}
  \end{equation}
 where   $c_1,\dots,c_k$  are integers with $c_k\ne 0$ and   $u_0,\dots,u_{k-1}$ are  integers.  
 The \emph{order} of $(u_n)_{n\geq 0}$   is the smallest possible $k$  in any recurrence  \eqref{eq: RecRel} for the $u_n$.
 
A \emph{division sequence} $(u_n)_{n\geq 0}$ of integers satisfies 
  \[
 u_m \text{ divides } u_n \text{ whenever } m \text{ divides } n.
 \]
We then call $(u_n)_{n\geq 0}$ a \emph{linear division sequence} if it is both a linear recurrence sequence and a division sequence.\footnote{This is sometimes called a \emph{linear divisibility sequence}.}
Examples include the Fibonacci numbers $(F_n)_{n\geq 0}$, the Mersenne numbers $2^n-1$, indeed all Lucas sequences, as well as polynomials of the form $n^d$, and their products.
One can analogously define linear division sequences in the algebraic integers,   in $\mathbb C[t]$ and even for polynomials in many variables. 

For every linear recurrence sequence   \eqref{eq: RecRel},    there is a   \emph{characteristic polynomial}
 \[
 f(x) := x^k-c_1x^{k-1}-\cdots -c_{k-1}x-c_k = \prod_{i=1}^m (x-\alpha_i)^{k_i}
 \]
where the $\alpha_i$ are  distinct, non-zero  (as $c_k\ne 0$) algebraic integers (called \emph{characteristic roots}) and the $k_i$ are positive integers for which $k_1+\cdots+k_m=k$. One can write  
  \begin{equation} \label{eq: Formula}
   u_n = \sum_{i=1}^m g_i(n) \alpha_i^n \text{ for all } n\geq 0  
 \end{equation}
 where the $g_i(t)\in \overline{\mathbb Q}[t]$  have degree $k_i-1$ (see e.g.~\cite{EPSW}).  We call $(u_n)_{n\geq 0}$ \emph{simple} if each $k_i=1$ and so each $g_i(t)$ is a constant.
  If we define $(u_n)_{n\geq 0}$ by \eqref{eq: Formula}
 then it satisfies the linear recurrence \eqref{eq: RecRel}  since knowing the $\alpha_i$'s and the multiplicities $k_i$ allow us to construct the characteristic polynomial and so determine the $c_i$'s.

 In 1936 Marshall Hall \cite{MaHa} partly classified all third-order linear division sequences  opening up the general classification question (though he did not feel that he was in a position to make a general conjecture). He remarked that the problem has its antecedents in the 1875 papers of Lucas (section VII of \cite{Luc}) as well as  work of D.H.~Lehmer \cite{Leh, Leh2},\footnote{The latter reference included  Lehmer's conjecture that the Mahler measure of a non-cyclotomic irreducible polynomial cannot get arbitrarily close to $1$.} Pierce \cite{Pier} and Poulet. The problem is mentioned in the book \cite{EPSW} as being ``very old'' (see also \cite{HCW}).   Further down we will discuss the complicated history of this question. Our main result is a resolution of this problem. To begin we develop some theory and examples of linear division sequences (which we denote LDS for convenience). The main difficulty is that LDS's may be defined differently in different congruence classes modulo some integer $M$: for example, $( 1_{(n,M)=1})_{n\geq 0}$ or any other periodic LDS.
 
 Since any product of linear division sequences (LDS's) is also a LDS, we will factor LDS's into  more basic LDS's, so that every LDS is more-or-less a product of   basic LDS's. There are four types of basic LDS's.

 \subsection*{Periodic LDS's} 
 Any periodic division sequence is a LDS. Indeed  $(u_n)_{n\geq 0}$  is a (purely) periodic LDS of period $M$  if and only if 
 $u_d$ divides $u_D$ whenever $d$ divides $D$, which divides $M$, and
 \begin{equation} \label{eq: PeriodicLDS}
u_n=u_a=\sigma_a u_d    \text{ whenever } n\equiv a \pmod M \text{ and } d=(a,M)
\end{equation}\
where $\sigma_d=1$ and otherwise $\sigma_a=-1$ or $1$.  

If there are characteristic roots $\alpha_i\ne \alpha_j$ such that $\alpha_i/ \alpha_j$ is a root of unity then $(u_n)_{n\geq 0}$ is \emph{degenerate}.\footnote{Some authors have restricted their attention to classifying non-degenerate LDS's but not all, so we investigate both.}  Much more important for our considerations is 
 the \emph{period} $M$ of the linear recurrence sequence $(u_n)_{n\geq 0}$ which is defined to be  the smallest positive integer such that whenever $\alpha_1^{e_1}\cdots \alpha_m^{e_m}$ (each $e_i\in \mathbb Z$) is a root of unity, it is an $M$th root of unity.\footnote{Beware: By this definition, every  linear recurrence sequence has a period, \emph{but} this does not necessarily make it periodic!}
   Thus if $(u_n)_{n\geq 0}$ is degenerate then $M>1$.\footnote{But not vice-versa, for example $8^n-(-4)^n-2^n+(-1)^n$ has period 2 but is non-degenerate.}

There are many ways in which a ``period'' $M$ can be relevant. For example if $(v_n)_{n\geq 0}$  is a LDS then so is
$(u_n)_{n\geq 0}$  where $u_{Mn}=v_n$ and otherwise $u_n=1$. More generally if $(v^{(d)}(n))_{n\geq 0}$  are LDS's for each $d$ dividing $M$
then 
 \begin{equation} \label{eq: BuildingLDS}
u_n := \prod_{d|(M,n)}  v^{(d)}_{n/d}(t) \text{ is also  a LDS}.
\end{equation}
This construction invalidates several   classifications that have been proposed in the literature (e.g.~in \cite{EV}).
We will see how periods impact on all the other types of LDS's:
 
 \subsection*{Power LDS's} We have already noted the examples $( n^d)_{n\geq 0}$ for each integer $d\geq 0$, but there are also   more complicated examples in which $u_n$ is more-or-less a power of $n$ depending on some
period $M\geq 1$: Select integers $e_d$ for each positive divisor $d$ of $M$ for which 
 \[
 0\leq e_d\leq e_D \text{ whenever } d \text{ divides } D, \text{ which divides } M.
 \]
 Then we have the \emph{power LDS}
 \begin{equation} \label{eq: PowerLDS}
    u_n = (n/d)^{e_d} \text{ where } (n,M)=d.
\end{equation}

 \subsection*{Exponential LDS's} An LDS like $( 2^n)_{n\geq 0}$ appears at first sight to be a trivial complication. But  exponents in such a LDS can assume different patterns, and exponential LDS's can be multiplied together. We formulate exponential LDS's as powers of a variable $t$, and then we can substitute an integer for $t$ to get an exponential LDS in $\mathbb Z$. We begin with $u_0=0$ and, for   $1\leq a\leq M$, we define the 
\emph{exponential LDS} in $\mathbb C[t]$  by  
\begin{equation} \label{eq: ExponentialLDS}
    u_{a+kM}(t)  = t^{kh_a} \text{ for all }  k\geq 0
\end{equation}
 where each $h_a\in \mathbb Z_{\geq 0}$ and if $1\leq a,b,r\leq M$ then
\[
h_a\leq rh_b \text{ whenever  } b\equiv r a \pmod M.
\]
We obtain an \emph{exponential LDS} in $\mathbb Z$ by taking
distinct primes $p_1,\dots,p_r$ and exponential LDS's  $(u_{1,n}(t))_{n\geq 0},\dots, (u_{r,n}(t))_{n\geq 0}$ in $\mathbb C[t]$, and then letting
 \begin{equation} \label{eq: ExponentialLDS2}
    u_n  =  u_{1,n}(p_1) \cdots  u_{r,n}(p_r) \text{ for every } n\geq 1.
\end{equation}
Exponential LDS's in $\mathbb Z$ are easily identified since there must exist a prime $p$ such that 
$v_p(u_n)\gg n$ for an infinite sequence of integers $n$ (where $v_p(m)$ denotes the exact power of $p$ dividing $m$), and this does not hold for any other type of LDS.

 \subsection*{Polynomially generated LDS's}  
 This is the most interesting class of LDS's, and includes   Lucas sequences (that is, $\frac{\alpha^n-\beta^n}{\alpha-\beta}$ where   $\alpha\ne \beta$ are both integers, or conjugate quadratic integers) like $F_n$ and $2^n-1$, and many others. The main idea is to construct a linear division sequence in $\mathbb C[t]$, homogenize it to obtain a LDS in $\mathbb C[x,y]^{\text{homogenous}}$, substitute in algebraic integers for $x$ and $y$, and then take the norm (or an appropriate subproduct of the norm) to obtain a LDS in $\mathbb Z$. For example $(\frac{t^n-1}{t-1})_{n\geq 0}$ is a 
 LDS in $\mathbb C[t]$, and it homogenizes to the  LDS $(\frac{x^n-y^n}{x-y})_{n\geq 0}$  in $\mathbb C[x,y]^{\text{homogenous}}$. We can substitute in $x=2,y=1$ to obtain the Mersenne numbers, or $x,y=\frac{1\pm \sqrt{5}}2$  for the Fibonacci numbers\footnote{A priori one might have expected to take the norm here for $\mathbb Q( \sqrt{5})/\mathbb Q$ but we need not since the value is already an integer, and so we restrict our product to a subproduct of the norm.}, or conjugate quadratics $\alpha$ and $\beta$ to obtain a Lucas sequence.
 
The simplest LDS's in $\mathbb C[t]$ take the form $(f(t^n)/f(t))_{n\geq 0}$ for an appropriate choice of $f(t)\in \mathbb C[t]$. We just saw that Lucas sequences come from the example
   $f(t)=t-1$, but one can also take $f(t)=t^2-1$ or $t^3-1$, or even $\lcm[t^2-1,t^3-1]=(t+1)(t^3-1)$ (the least common multiple as polynomials), and in general 
 \begin{equation} \label{eq: f(t)-defn}
f(t)=   \text{lcm} [(t^k-1)^{a_k}: k\geq 1]  
\end{equation}
where each $a_k\geq 0$ is an integer, with only finitely many of the $a_i$ non-zero. (This is a reformulation of a result of Koshkin \cite{Kosh}.).

A \emph{multiplicatively-dependent} linear recurrence is one in which all   pairs of  characteristic roots $\alpha_i\ne \alpha_j$ are multiplicatively dependent (that is there exist non-zero integers $r,s$ for which $\alpha_i^r=\alpha_j^s$).\footnote{In which case there exists a $\gamma$ such that every $\alpha_i=\zeta_i\gamma^{a_i}$ for some $a_i\in \mathbb Z$ and root of unity $\zeta_i$ (see Lemma \ref{lem: premultdep}).}  The LDS's $(f(t^n)/f(t))_{n\geq 0}$ given by \eqref{eq: f(t)-defn} are all multiplicatively-dependent LDS's in $\mathbb C[t]$, and there are more, for example one can take
$f_n(t)=(\zeta t)^n-1$ where $\zeta$ is a  root of unity for each $n\geq 0$, or even
\begin{equation} \label{eq: f(t)inM-defn}
f_n(t)=\underset{\substack{\ m\geq 1 \\ \,  0\leq j\leq M-1 }}\lcm  (  (\zeta_{M}^{j} t^m)^n-1)^{h_{m,j}}
\end{equation}
where $\zeta_M$ is a primitive $M$th root of unity, each $h_{m,j}\geq 0$ is an integer, with only finitely many non-zero.  
LDS's $(f_n(t))_{n\geq 0}$ in $\mathbb C[t]$ of period $M$ are classified in Theorem \ref{thm: genF(t)},
 normalized so that   $f_d(t)=1$ whenever $d$ divides $M$.

Homogenizing the $(f_n(t))_{n\geq 0}$ we obtain a \emph{multiplicatively-dependent LDS} $(F_n(x,y))_{n\geq 0}$ in  $\mathbb C[x,y]^{\text{homogenous}}$;
here we adjust our definition of  multiplicatively-dependent since now it is the ratios $\alpha_i/\alpha_j$ of the characteristic roots that are multiplicatively dependent (not the $\alpha_i$'s themselves). 

Next for any given $\alpha,\beta\in \overline{\mathbb Q^*}$ where $\alpha/\beta$ is not a root of unity, let $L$ be the splitting field extension for $\alpha$ and $\beta$ over
$\mathbb Q$, and let  $H$ be a minimally sized set of elements of $\text{Gal}(L/\mathbb Q)$ for which
\[
\prod_{\sigma\in H} F_n (\alpha,\beta)^\sigma \in \mathbb Z \text{ for all } n, \text{ and } \prod_{\sigma\in H} (\alpha,\beta)^\sigma \text{ is an ideal of } \mathbb Z
\]
(where $\theta^\sigma$ means applying the Galois element $\sigma$ to the algebraic number $\theta$).\footnote{$H$ is typically a set of representatives for 
$
\text{Gal}(L/\mathbb Q)\, /\, \text{Stab}_{\text{Gal}(L/\mathbb Q)} ( \tfrac{F_n(\alpha,\beta)}{F_1(\alpha,\beta)}: n\geq 1  ).
$
but not always, as we will see with Lehmer sequences below.}
 Then select $q$    to be the positive integer for which
\[
(q) = \prod_{\sigma\in H} (\alpha,\beta)^\sigma, \text{   let } v_n = \frac 1{ q^{\deg F_n}} \cdot  \prod_{\sigma\in H} F_n (\alpha,\beta)^\sigma ,
\]
and $(u_n)_{n\geq 0}$ is a \emph{polynomially generated LDS} in $\mathbb Z$ where $u_n=v_n/v_{(n,M)}$.

  We call $(u_n)_{n\geq 0}$  a  \emph{Lucas sequence of order} $r$ if
 $F_n(x,y)=x^n-y^n$ and $|H|=r$.

\begin{remark} For convenience, in the definitions of power LDS, 
 exponential  LDS  and polynomially generated LDS, we ``normalized'' the values at  each divisor $d$ of $M$, 
 forcing $u_d=1$, but left $u_d$ to be more flexible for a periodic LDS.   
\end{remark}
 
 \subsection{Classifying    LDS's and strong LDS's in $\mathbb Z$}
We begin by classifying non-degenerate linear division sequences in the integers.

 \begin{corollary} \label{thm: main6}
 If $(u_n)_{n\geq 0}$ is a non-degenerate linear division sequence of period $M$ in the integers with $u_1=1$  then $u_n=n^e \lambda^{n-1}v_n$ where $e\geq 0$ and $\lambda\ne 0$ are integers, and
 $(v_n)_{n\geq 0}$ is the product of  polynomially generated LDS's, constructed   from LDS's $(f_n(t)/f_1(t))_{n\geq 0}$ in $\mathbb C[t]$ with  $f_n(t)$ as in \eqref{eq: f(t)inM-defn}.
 \end{corollary}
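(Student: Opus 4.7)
The plan is to derive this corollary directly from the general classification theorem proved earlier in the paper, which factors any LDS in $\mathbb Z$ as a product, via the construction \eqref{eq: BuildingLDS}, of the four basic types: periodic, power, exponential, and polynomially generated LDS's (each with an associated period dividing $M$). My job is then to use the non-degeneracy hypothesis to collapse the first three factors into $1$, $n^e$, and $\lambda^{n-1}$ respectively, leaving $v_n$ as the polynomially generated factor.

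First I would analyze the characteristic roots contributed by each basic type, and show that any non-trivial $M$-structure in the first three types forces a violation of non-degeneracy. A (non-trivial) periodic LDS of period $M$ is a $\mathbb Z$-linear combination of $\zeta_M^{jn}$, so all its characteristic roots are $M$th roots of unity. A power LDS \eqref{eq: PowerLDS} with $e_d$ depending non-trivially on $d\mid M$ can be expanded, using the identity $1_{n\equiv a\pmod M}=\tfrac 1 M\sum_j \zeta_M^{j(n-a)}$, as a sum of terms $\zeta_M^{jn}\cdot n^{k}$, again introducing $M$th-root-of-unity characteristic roots. An exponential LDS \eqref{eq: ExponentialLDS2} with $h_a$ depending on $a\pmod M$ has characteristic roots of the form $\zeta_M^j\prod_i p_i^{h_{i,a}/M}$, whose pairwise ratios include non-trivial $M$th roots of unity whenever the $h_a$ are not constant in $a$. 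In each of these cases, these root-of-unity factors survive in the characteristic roots of the full $u_n$, forcing root-of-unity ratios between distinct characteristic roots and contradicting non-degeneracy.

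Combining these observations with the normalization $u_1=1$, I conclude that the periodic factor is identically $1$, the power factor reduces to $n^e$ for some integer $e\geq 0$, and the exponential factor reduces to $\lambda^{n-1}$ for some integer $\lambda\neq 0$ (the $n-1$ exponent being dictated by $u_1=1$ after the collapse). The remaining $v_n$ is then a product of polynomially generated LDS's built from $\mathbb C[t]$-LDS's of the form $(f_n(t)/f_1(t))_{n\geq 0}$ with $f_n(t)$ of the shape \eqref{eq: f(t)inM-defn} as classified by Theorem~\ref{thm: genF(t)}; this factor is permitted to retain a non-trivial $M$-structure because the $\zeta_M^j$ factors inside each $f_n(t)$ get absorbed by the Galois-norm construction in the definition of polynomially generated LDS in $\mathbb Z$, so no root-of-unity ratio among characteristic roots of $u_n$ is created.

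The main obstacle will be the collapse of the exponential factor to a \emph{single integer} base $\lambda$: a priori $\lambda=\prod_i p_i^{h_i/M}$ is only an algebraic number, and one must argue that the integrality of the $u_n$, together with non-degeneracy, forces each $h_{i,a}$ to be independent of $a$ and a multiple of $M$, so that $\lambda$ is a genuine non-zero integer. One must also track signs carefully throughout, since the periodic and polynomially generated constructions permit $\pm 1$ factors that need to be consolidated consistently with the statement (allowing negative $\lambda$).
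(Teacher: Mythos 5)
Your overall strategy---start from Theorem \ref{thm: main} and use non-degeneracy to kill the periodic, power and exponential structure---is the right instinct, but the central step of your second paragraph is asserted rather than proved, and it is precisely the hard point. You claim that any non-trivial $M$-dependence in the periodic, power or exponential factors contributes $M$th-root-of-unity characteristic roots that ``survive in the characteristic roots of the full $u_n$.'' Survival is not automatic: the polynomially generated factors in Theorem \ref{thm: main} themselves carry $\zeta_M$'s (that is the whole point of \eqref{eq: f(t)inM-defn}), so when you multiply the factors together the $\zeta_M$-twisted terms coming from a periodic or power factor can in principle cancel against, or merge with, $\zeta_M$-twisted terms already present in the polynomially generated part. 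A toy illustration of the phenomenon: $(-1)^n\cdot\bigl((-2)^n-(-1)^n\bigr)=2^n-1$, so a genuinely non-constant periodic factor times a linear recurrence can be non-degenerate with no root-of-unity ratios surviving. Relatedly, your heuristic that the $\zeta_M^j$'s inside the polynomially generated part are ``absorbed by the Galois-norm construction'' is not right either: in the paper's own example $8^n-(-4)^n-2^n+(-1)^n$ the roots $-4$ and $-1$ visibly retain the factor $\zeta_2=-1$; the sequence is non-degenerate because the particular ratios that occur happen not to be roots of unity, not because the $\zeta_M$'s vanish. So your argument, as written, does not distinguish the (forbidden) periodic/power/exponential $M$-structure from the (permitted) polynomially generated $M$-structure.

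The paper's proof takes a different and shorter route that sidesteps all of this. It applies Theorem \ref{thm: main} only on the single progression $n\equiv 1\pmod M$, where $d=(n,M)=1$, so the periodic factor is the constant $\kappa_1$, the power factor is $n^{e_1}$, and the exponential factor is $\lambda^{n-1}$ with $\lambda^M\in\mathbb Z$. Expanding the product gives $u_n=n^e\sum_i g_i\,\zeta_M^{e_in}\delta_i^n$ on that progression. Non-degeneracy is then used in a completely different way from yours: it guarantees that no two terms of the global representation \eqref{eq: Formula} can coalesce or cancel when restricted to an arithmetic progression, so the expression found on $n\equiv 1\pmod M$ must literally be the global expression \eqref{eq: Formula}, valid for all $n$; the normalization $u_1=1$ then fixes the constant. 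If you want to repair your argument you would need to prove a comparable ``no cancellation across factors'' statement, which is essentially equivalent to the paper's uniqueness-of-representation step; and you would still need to supply the integrality of $\lambda$ (which you correctly flag but do not resolve---it follows once the formula is known to hold for all $n$, via the classification of exponential LDS's).
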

 
 The $\zeta_M$'s in the definition of the $v_n$'s in Corollary \ref{thm: main6} seem to be necessary: for example
 $8^n-(-4)^n-2^n+(-1)^n$, and see the  discussion after the proof of Corollary \ref{thm: main6}.
  
  Corollary \ref{thm: main6}  follows from our main theorem:
 
 \begin{theorem} \label{thm: main}
 If $(u_n)_{n\geq 0}$ is a linear division sequence in the integers with period $M$ then $u_n$ is the product of a periodic sequence of rationals (of period $M$), a power LDS, an exponential  LDS  and a finite number of polynomially generated LDS's.
 \end{theorem}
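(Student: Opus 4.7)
The plan is to analyze the Binet-type expansion $u_n = \sum_{i=1}^m g_i(n)\alpha_i^n$ from \eqref{eq: Formula} and show that the division property forces each of the four advertised factors to appear in a controlled way. The overall strategy runs in three stages: first, reduce to a non-degenerate setting by restricting to residue classes modulo $M$; second, use Diophantine tools (the subspace theorem and Ritt's theorem) to show that the characteristic roots split into multiplicatively-dependent classes each generating a polynomially-generated LDS factor; third, peel off the power, exponential, and periodic pieces.

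For the first stage, I would partition the characteristic roots $\alpha_i$ into their multiplicatively-dependent equivalence classes $C_1,\dots,C_s$ and, within each $C_j$, use the structure lemma alluded to in the excerpt (on multiplicatively dependent tuples) to write every root as $\alpha_i = \zeta_i\gamma_j^{a_i}$, with $\zeta_i$ a root of unity whose order divides $M$ and $\gamma_j$ a fixed base. Restricting to $n\equiv a\pmod{M}$ turns each $\zeta_i^n$ into a constant, so $u_{a+kM}$ becomes a linear recurrence in $k$ whose characteristic roots are the $\gamma_j^{Ma_i}$; by the definition of $M$ this restricted sequence is non-degenerate, and all subsequent arguments happen one arithmetic progression at a time.

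For the second (and hardest) stage, I would apply the Bugeaud--Corvaja--Zannier approach. The hypothesis $u_m\mid u_n$ whenever $m\mid n$, combined with the Binet expansion, forces $u_n$ to share a very large common factor with partial subsums coming from each single class $C_j$. By the subspace theorem (of Schmidt and Schlickewei) applied to the resulting $S$-unit-type relations, such common factors can persist along arithmetic progressions only if the characteristic roots within $C_j$ fit together as the homogenization of an LDS $(f_n(t)/f_1(t))_{n\geq 0}$ in $\mathbb{C}[t]$. Ritt's irreducibility theorem then pins down the exact shape of $f_n(t)$, matching \eqref{eq: f(t)inM-defn}, and applying the Galois-theoretic norming construction from the excerpt produces one polynomially generated LDS factor from each such class.

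Finally, I would extract the remaining pieces. The polynomial coefficients $g_i(n)$ can only carry a single power factor $n^e$ with the divisor structure of \eqref{eq: PowerLDS}, forced by iterating $u_n\mid u_{pn}$ for primes $p\nmid M$ and tracking $v_p(u_n)$. Any class $C_j$ consisting of a single rational integer $\gamma_j$ contributes an exponential LDS of the form \eqref{eq: ExponentialLDS}--\eqref{eq: ExponentialLDS2}, with the compatibility conditions on the exponents forced by divisibility along each progression. What remains after dividing out all these factors is a bounded rational sequence which is periodic modulo $M$, yielding the periodic factor described in \eqref{eq: PeriodicLDS}. The main obstacle is the second stage: translating the divisibility condition into a family of Diophantine relations to which the subspace theorem applies uniformly in $n$, and extracting from its output the precise polynomial identity \eqref{eq: f(t)inM-defn} rather than merely a structural estimate. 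I expect this to require a careful class-by-class treatment, together with repeated use of the divisibility constraint to propagate local information about one $C_j$ into rigid global information about the entire LDS.
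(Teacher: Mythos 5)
There is a genuine gap at your second stage, which is where the real work of the theorem lives. You propose to use the subspace theorem (Bugeaud--Corvaja--Zannier) to force the roots in each multiplicative class to assemble into the homogenization of an LDS in $\mathbb C[t]$. But the subspace theorem only controls gcd's of $S$-unit expressions of the form $u^n-1$, $v^n-1$ for multiplicatively \emph{independent} $u,v$; it is the right tool once you already know the relevant polynomials are products of monomials and binomials, and it cannot by itself rule out the possibility that some $U_a(x,y_1,\dots,y_r)$ (the polynomial with $u_n=U_a(n,\gamma_1^n,\dots,\gamma_r^n)$ for $n\equiv a\pmod M$) has an irreducible \emph{multinomial} factor contributing to $u_n$. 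Eliminating multinomial factors is exactly the crux, and the paper does it in Proposition \ref{prop: Key result}: the Hadamard quotient theorem converts $u_n\mid u_{qn}$ into the polynomial divisibility \eqref{eq: Id3}, and then the Ritt--Gourin--MacColl theorem (Lemma \ref{lem: RGM}), which says $f(x,y_1^m,\dots,y_r^m)$ has only finitely many new factorizations, forces a degree contradiction unless $U_a$ is a monomial times binomials free of $x$. (An alternative route in the paper gets the same conclusion from the Vandermonde determinant bound of Proposition \ref{pr: coolio}.) Your plan omits the Hadamard quotient step entirely and misplaces Ritt's theorem as an afterthought ``pinning down the shape of $f_n(t)$'' when it is in fact the engine of the structural reduction.

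What the subspace-theorem route actually delivers is the paper's Theorem \ref{thm: main4}: if the gcd's $\gcd(u_n,\dots,u_{(k-1)n})$ are exponentially large often enough, then $u_n=v_nw_n$ with $v_n$ an LDS and $w_n$ merely a linear recurrence. That is a divisor statement with an uncontrolled cofactor, not a classification; iterating it does not recover the exact multiplicative decomposition of $u_n$ into periodic, power, exponential and polynomially generated pieces (Corollary \ref{cor: Key1} and the bookkeeping in Section 7 that turns $\kappa_a$, $n^{\epsilon_a}$ and $\lambda_a^{n-1}$ into the advertised factors --- your third stage, which is essentially correct). So your first and third stages track the paper reasonably well, but the second stage as written would not close: you need the Hadamard quotient theorem plus Ritt--Gourin--MacColl (or the determinant divisibility) in place of the subspace theorem.
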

 
 Theorem \ref{thm: main} would be  ``if and only if'', if we could be more specific about the periodic sequence of rationals (so as to guarantee each $(u_n)_{n\geq 0}$ is a division sequence in $\mathbb Z$) but the necessary conditions appear to be complicated.  Certainly the product is a LDS  in $\mathbb Z$ if  the periodic sequence is any periodic LDS, but there are can be several other cases besides.  If we don't allow an exponential LDS factor (which  is arguably the more natural case) then we can provide an ``if and only if'' statement:
   
  \begin{corollary} \label{thm: main2}
  $(u_n)_{n\geq 0}$ is a linear division sequence in the integers  with  \newline
 \centerline{ $\limsup_{n\to \infty:\ u_n\ne 0} v_p(u_n)/n =0$ for every prime $p$ } \newline
 if and only if
    $u_n$ is the product of a periodic LDS, a power LDS,   and a finite number of polynomially generated LDS's.
 \end{corollary}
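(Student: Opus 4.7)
The plan is to derive Corollary \ref{thm: main2} directly from Theorem \ref{thm: main}, exploiting the characterization noted just after \eqref{eq: ExponentialLDS2}: among the four basic types of LDS only an exponential LDS factor can produce a prime $p$ with $v_p(u_n) \gg n$ infinitely often. Thus the growth hypothesis $\limsup_{n:\, u_n\ne 0} v_p(u_n)/n = 0$ for every prime $p$ is essentially equivalent to the exponential factor being trivial.

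For the ``only if'' direction, I apply Theorem \ref{thm: main} to write $\un = P_n \cdot Q_n \cdot E_n \cdot R_n$ where $P$ is periodic of rationals, $Q$ is a power LDS, $E$ is an exponential LDS, and $R$ is a product of polynomially generated LDS's. Suppose for contradiction that $E$ is non-trivial: then in the notation of \eqref{eq: ExponentialLDS}--\eqref{eq: ExponentialLDS2} some prime $p_i$ and residue $a \pmod M$ satisfy $h_a > 0$, giving $v_{p_i}(E_n) = k h_a$ whenever $n = a + kM$. Since (by the ``if'' direction, below) the remaining factors contribute only $O(\log n)$ to $v_{p_i}(u_n)$, this forces $\limsup v_{p_i}(u_n)/n \geq h_a/M > 0$, contradicting the hypothesis. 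Hence $E_n \equiv 1$ and $u_n = P_n Q_n R_n$. To upgrade the rational periodic factor to a periodic LDS, I note that $P_n = u_n/(Q_n R_n)$ is a ratio of integer-valued division sequences that is periodic of period $M$ and takes only finitely many rational values; combined with the divisibility properties of $u$, $Q$, and $R$, a direct check against \eqref{eq: PeriodicLDS} shows that $P$ is in fact an integer periodic LDS.

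For the ``if'' direction, suppose $\un$ factors as in the statement. Since $v_p$ is additive on products, it suffices to verify $v_p(u_n) = o(n)$ for each of the three factor types and every prime $p$. A periodic LDS takes only finitely many values, so $v_p$ is bounded. A power LDS satisfies $v_p((n/d)^{e_d}) \leq e_d\, v_p(n) = O(\log n)$. For a polynomially generated LDS the bound $v_p(u_n) = O_p(\log n)$ is the core $p$-adic estimate: for Lucas-type factors $\alpha^n - \beta^n$ it follows from classical lifting-the-exponent, and in general from the $p$-adic subspace theorem in the Bugeaud--Corvaja--Zannier formulation that underlies the paper's main machinery.

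The main obstacle is the $O_p(\log n)$ bound on $v_p$ of polynomially generated LDS's; this is where the deep input sits, drawing on the subspace-theorem techniques emphasized in the abstract. Everything else is bookkeeping within the decomposition supplied by Theorem \ref{thm: main}.
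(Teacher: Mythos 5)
Your skeleton matches the paper's: apply Theorem \ref{thm: main}, use the $v_p$ growth hypothesis to kill the exponential factor, then upgrade the remaining periodic sequence of \emph{rationals} to a periodic LDS. But the last step is precisely where the content of this corollary lives, and you dismiss it with ``a direct check against \eqref{eq: PeriodicLDS} shows that $P$ is in fact an integer periodic LDS.'' This is a genuine gap. A priori the $\kappa_a$ produced by Theorem \ref{thm: main} are only rational: to show $\kappa_a\in\mathbb Z$ you must rule out that a prime $p$ in the denominator of $\kappa_a$ is absorbed, for \emph{every} $n\equiv a\pmod M$, by the power factor or by the polynomially generated factors. The paper does this via Lemma \ref{PolyGenDivByp}(c): a prime $p$ divides a polynomially generated LDS $u_{j,n}$ only when $n$ is divisible by one of finitely many integers $n_1,\dots,n_k>1$ (the rank-of-apparition structure), so one can choose $n\equiv a\pmod M$ avoiding all of them and conclude integrality. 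The same device, with a choice of $R$ coprime to $pL$ in the relevant progression, is then needed to prove $\kappa_a$ divides $\kappa_d$ and vice versa for $d=(a,M)$, which is what makes $(\kappa_a)$ a periodic LDS rather than merely a periodic sequence of integers. Note that $u_m\mid u_n$ for $m\mid n$ does not transfer to the $\kappa$'s automatically, since the quotient could be redistributed among the other factors; the prime-avoidance argument is what prevents this. The paper itself flags (right after Theorem \ref{thm: main}) that pinning down the periodic rational factor is the delicate point, so it cannot be waved away as bookkeeping.

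Separately, you misplace the ``deep input.'' The bound $v_p(u_{j,n})=O_p(\log n)$ for polynomially generated LDS's is Lemma \ref{PolyGenDivByp}(a), proved by an elementary lifting-the-exponent argument (Proposition \ref{prop: P-div}, the order of $\gamma$ modulo $P^e$); the subspace theorem plays no role in this corollary. Your ``if'' direction is otherwise fine: products of LDS's are LDS's, and additivity of $v_p$ together with the bounds for each factor type gives the $\limsup$ condition.
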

 
 In section \ref{sec: FindFactors} we will explain how to determine the ``LDS factors'' from Theorem \ref{thm: main} 
 and Corollary \ref{thm: main2} of a given LDS in $\mathbb Z$.
 
 A \emph{strong linear division sequence in the integers}  is a linear recurrence sequence $(u_n)_{n\geq 0}$ for which
 \[
 (u_m,u_n)=|u_{(m,n)}|  \text{ for all integers } m,n\geq 1.\footnote{Some authors allow only $(u_m,u_n)=u_{(m,n)}$ so that the $u_r$'s are all $\geq 0$. However this definition then excludes desirable examples like $u_n=\frac 1{2i}((2+i)^n-(2-i)^n)$.}
 \]
 Well-known examples include $F_n, 2^n-1, n$ and powers of these. In our next result we will classify all strong LDS's, showing that there are little more than the obvious generalizations of these examples.

\begin{theorem} \label{thm: main3}
 $(u_n)_{n\geq 0}$ is a strong linear division sequence in the integers of period $M$ with $u_1=1$ if and only if $\kappa_n$  is a strong periodic LDS of period $M$ with $\kappa_n=\pm \kappa_d$ where $d=(n,M)$ for $1\leq n\leq M$, and for some integer $r\geq 1$ one of the following holds: 
\begin{itemize} 
\item $u_n=\kappa_d$;   
\item $u_n = \kappa_d (n/d)^r$  and   each $\kappa_{p^e}$ is a power of $p$,   and $\kappa_d = \prod_{p^e\| d} \kappa_{p^e}$ for each $d|M$; 
\item  $u_n=\kappa_d   (w_n/w_d)^r$, $w_n$  a Lucas  sequence or a Lehmer sequence (with $r$ even).\footnote{Lehmer sequences will be described in section \ref{sec: conjugates}.}
\end{itemize}
Moreover if $D$ is the smallest integer for which a given prime power $p^e$ divides $u_D$, then
 $p^e$ divides $u_n$ if and only if $D$ divides $n$.  
\end{theorem}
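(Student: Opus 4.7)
The plan is to establish the moreover statement first, then verify the ``if'' direction case by case, and finally invoke Theorem \ref{thm: main} to derive the ``only if'' direction.

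For the moreover statement, fix a prime power $p^e$ and let $D$ be minimal with $p^e \mid u_D$. If $p^e \mid u_n$ then $p^e \mid (u_n, u_D) = |u_{(n,D)}|$, so minimality forces $D = (n,D)$ and hence $D \mid n$; the converse is immediate from $u_D \mid u_n$. This uses only strong divisibility, not the main classification, so we may invoke it freely throughout the rest of the argument.

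For the ``if'' direction, each of the three listed forms satisfies $(u_m, u_n) = |u_{(m,n)}|$ by direct computation. The periodic case is immediate. For $u_n = \kappa_d (n/d)^r$, the multiplicativity $\kappa_d = \prod_{p^e \| d} \kappa_{p^e}$ together with the assumption that each $\kappa_{p^e}$ is a power of $p$ permits a clean prime-by-prime check, reducing to the elementary strong divisibility of $n^r$. For $u_n = \kappa_d (w_n/w_d)^r$ we invoke the classical strong divisibility of Lucas and Lehmer sequences, noting that raising to a power $r$ preserves strong divisibility.

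For the ``only if'' direction, apply Theorem \ref{thm: main} to write $u_n$ as a product of periodic, power, exponential, and polynomially generated LDS factors. The exponential factor is ruled out first: if non-trivial, there is a prime $p$, residue $a \pmod M$, and $h \geq 1$ with $v_p(u_{a+kM}) \geq kh$ for all large $k$; but $(a+kM, a+(k+1)M) \mid M$, so strong divisibility would force $v_p(u_m) \geq kh$ for some fixed $m \leq M$, which is absurd. Restricting to $n \mid M$ then exhibits the periodic factor $\kappa_n$ as a strong periodic LDS of period $M$, and the sign structure $\kappa_n = \pm \kappa_d$ on the remaining residue classes follows from the general form \eqref{eq: PeriodicLDS} combined with strong divisibility applied within each coset modulo $M$.

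The main obstacle is showing that the remaining non-periodic factors collapse into one of the three prescribed forms. The key rigidity is that strong divisibility at coprime indices demands $(u_m, u_n) = 1$ whenever $(m,n) = 1$: this immediately forces each $\kappa_{p^e}$ in a power-LDS factor to be a power of $p$ (by testing $m = p^e$ against $n = q$ for primes $q \nmid pM$) and, more delicately, rules out both a power factor and a polynomially generated factor contributing nontrivially, or two distinct polynomially generated factors appearing, since a shared prime between $u_m$ and $u_n$ coming from different factors would contradict $(u_m, u_n) = 1$. After reducing to a single polynomially generated factor in the form \eqref{eq: f(t)inM-defn}, imposing strong divisibility forces $f_n(t)$ to degenerate to a single cyclotomic term, producing $(t-1)^r$ and hence Lucas sequences under homogenization and Galois averaging, or the two-term Lehmer variant (with $r$ even) when a nontrivial Galois involution is required to land in $\mathbb Z$.
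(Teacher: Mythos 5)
Your overall architecture matches the paper's: prove the \emph{moreover} clause first from strong divisibility alone, rule out the exponential factor by noting that consecutive terms in a progression mod $M$ have gcd equal to some $u_d$ with $d\mid M$, and then analyze the remaining factors from Theorem \ref{thm: main}/Corollary \ref{thm: main2}. Those parts are fine. But the heart of the theorem is precisely the step you compress into one sentence --- that the non-periodic, non-power part collapses to a \emph{single} power of a \emph{single} Lucas or Lehmer sequence --- and there the proposal has genuine gaps. First, your claim that ``a shared prime between $u_m$ and $u_n$ coming from different factors would contradict $(u_m,u_n)=1$'' does not by itself rule out two multiplicatively independent polynomially generated factors: you must actually \emph{produce} a prime ideal $P$ and coprime-enough indices at which both factors are divisible by $P$ while $u_{(m,n)}$ is not. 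The paper does this via the \v Cebotarev density theorem (Proposition \ref{prop; cebapple}): for multiplicatively independent $\alpha,\beta$ one finds $P$ with $\operatorname{ord}_P(\alpha)=a$ and $\operatorname{ord}_P(\beta)=b$ mutually non-dividing, so $P$ divides $u_a$ and $u_b$ but cannot divide $u_{(a,b)}$. Nothing in your argument supplies such a $P$, and without it the exclusion simply does not follow. Second, excluding the simultaneous presence of a power factor and a polynomially generated factor ($r,s\geq 1$ both nonzero) is not a coprimality triviality either: the paper's argument picks a large prime $p$, uses $p\mid u_p$ from the power factor and the law of apparition $p\mid \frac{\alpha^{e}-\beta^{e}}{\alpha^D-\beta^D}$ with $e=p-(\frac dp)$ from the Lucas/Lehmer factor, and derives $p\mid (u_p,u_{p\pm1})=u_1=1$. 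You would need this (or an equivalent) explicitly.

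Two further points. The reduction of a single multiplicatively dependent polynomially generated factor to a pure power of $\frac{\gamma^n-1}{\gamma-1}$ requires ruling out lcm's and products like $[t^{an}-1,t^{bn}-1]$ and $(\gamma^n-1)(\gamma^{bn}-1)$ (the paper uses $t^{ab}-1\mid (u_a,u_b)=u_1$ and Proposition \ref{prop: P-div}), and then the dichotomy ``Lucas, or Lehmer with $r$ even'' is the content of Lemma \ref{lem: Lehmer}, whose proof rests on Hilbert 90 and the Conway--Jones theorem on vanishing sums of roots of unity; asserting it as ``classical'' or as an automatic consequence of Galois averaging skips a substantial argument. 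Finally, your parenthetical justification that each $\kappa_{p^e}$ is a power of $p$ by testing $m=p^e$ against primes $q\nmid pM$ only shows that $\kappa_{p^e}$ is supported on primes dividing $pM$; the paper instead uses the \emph{moreover} clause together with $u_{p^f}=\kappa_{(p^f,M)}(p^f/(p^f,M))^r$ for large $f$ to force the rank of apparition of every prime divisor of $\kappa_M$ to be a power of that same prime. Your high-level plan is the right one, but as written the proof is incomplete at exactly the points where the real work lies.
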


 Our proofs of these theorems do not require that the linear recurrence $(u_n)_{n\geq 0}$ is   a   division sequence in the hypotheses, but rather only  that $u_n$ divides $u_{jn}$ (or $(u_{in},u_{jn})=u_{(i,j)n}$)  for all $i,j$ up to some (explicit) bound that depends only on $k$,   for   many large $n$ in each arithmetic progression mod $M$. We can interpret this as saying   that if   $\gcd(u_n,\dots,u_{rn})=u_n$ often enough then there is an   ``algebraic reason'' for this (that is, that $(u_n)_{n\geq 0}$ is a LDS).
 One might guess that even if these gcds are only pretty large then there must also be an
 ``algebraic reason'' for it, as we now exhibit:

\begin{theorem} \label{thm: main4}
Suppose that $(u_n)_{n\geq 0}$ is a linear recurrence sequence in the integers with period $M$ of order $k$ with $u_0=0$. Fix $\epsilon>0$. Suppose that $\gcd(u_n,\dots,u_{(k-1)n})>e^{\epsilon n}$ for 
$\geq \epsilon \frac xM$ integers $n\leq x$ with $n\equiv a \pmod M$ once $x$ is sufficiently large.
Then there exists a 
LDS, $(v_n)_{n\geq 0}$, and a linear recurrence sequence  $(w_n)_{n\geq 0}$, both in $\mathbb Z$, such that 
\[
 u_n=v_nw_n \text{ for all sufficiently large } n\equiv a \pmod M.
 \]
 \end{theorem}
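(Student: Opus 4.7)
\textbf{Proof plan for Theorem \ref{thm: main4}.} The strategy is to apply the subspace theorem, in the style of Bugeaud--Corvaja--Zannier, to force an exponentially large common divisor of the subsequence $(u_n, u_{2n},\dots, u_{(k-1)n})$ to come from a genuine algebraic LDS factor of $(u_n)$ on the arithmetic progression $n \equiv a \pmod M$.

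First, I would expand $u_n = \sum_{i=1}^m g_i(n)\alpha_i^n$ using \eqref{eq: Formula}, and for each $n$ in the exceptional set
\[
\mathcal E = \{\, n \leq x : n \equiv a \pmod M,\ \gcd(u_n, u_{2n},\dots, u_{(k-1)n}) > e^{\epsilon n}\,\},
\]
denote the gcd by $d_n$. Then for each $j = 1,2,\dots,k-1$ we have
\[
\sum_{i=1}^m g_i(jn)\,\alpha_i^{jn} \equiv 0 \pmod{d_n},
\]
so we have $k-1$ simultaneous polynomial--exponential expressions, each very small in absolute value compared with $d_n$. This is exactly the setup in which the Corvaja--Zannier gcd machinery, based on the Schmidt--Schlickewei subspace theorem, applies.

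Second, I would invoke the Bugeaud--Corvaja--Zannier style gcd bound: absent a non-trivial multiplicative/algebraic relation among the characteristic data, the gcd of such polynomial--exponential values is at most $e^{o(n)}$ outside a density-zero set of $n$. Since $|\mathcal E \cap [1,x]| \gg x/M$, this forces a non-trivial algebraic factorization. Concretely, one can exhibit a proper non-empty subset $S\subsetneq\{1,\dots,m\}$ and polynomials $\tilde g_i(t)\in\overline{\mathbb Q}[t]$ such that
\[
v_n := \sum_{i\in S}\tilde g_i(n)\,\alpha_i^n
\]
is a linear recurrence sequence in $\mathbb Z$ that divides $u_n$ for all sufficiently large $n$ with $n\equiv a\pmod M$. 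The period-$M$ hypothesis is used here to isolate a single congruence class on which the same subset $S$ works.

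Third, I would promote $v_n$ to an honest LDS on the progression: because $v_n$ inherits the divisibility $v_n\mid u_{jn}$ and a Ritt-irreducibility decomposition (as used already in the classification leading to Theorem~\ref{thm: main}) splits the characteristic polynomial of $(v_n)$ into the four basic types (periodic, power, exponential, polynomially generated), each a LDS; their product is therefore a LDS on $n\equiv a\pmod M$. The quotient $w_n = u_n/v_n$ is then a linear recurrence sequence in $\mathbb Z$ by the classical fact that the ratio of two linear recurrences, when integer-valued, is again a linear recurrence (obtained by expressing $w_n$ in the polynomial--exponential basis coming from the complementary characteristic roots $\{\alpha_i : i\notin S\}$).

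The main obstacle will be Step~2: upgrading a \emph{positive density} of exponentially large gcds into a \emph{universal} algebraic factorization valid for all sufficiently large $n\equiv a\pmod M$ (not merely on $\mathcal E$). This requires a careful application of the subspace theorem to bound the number of $(n,d_n)$ producing the $k-1$ required congruences without an underlying identity, and then a pigeonhole/Vandermonde argument — exploiting that there are only finitely many choices of subset $S$ and degree data — to show that a single algebraic factorization must account for the positive density of coincidences on the entire progression.
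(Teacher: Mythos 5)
Your high-level strategy (subspace theorem via Corvaja--Zannier, finiteness of algebraic configurations, Hadamard quotient for the cofactor) matches the paper's in spirit, but the proposal has a genuine gap at its centre: you never explain how to pass from the hypothesis that $\gcd(u_n,\dots,u_{(k-1)n})$ is large to a situation where the Corvaja--Zannier bound \eqref{eq: CZ-ineq} actually applies. That bound concerns $N((u-1,v-1))$ for \emph{two} multiplicatively independent $S$-units; it says nothing directly about the common divisor of $k-1$ polynomial--exponential values. The paper's bridge is the generalized Vandermonde machinery of Sections \ref{sec: Vandermonde} and \ref{sec: 1st consequences}: since $u_0=0$, Proposition \ref{pr: coolio} shows the gcd divides $\prod_j(n\alpha_j^{n-1})^{\binom{k_j}{2}}\prod_{i<j}\big((\alpha_i^n-\alpha_j^n)/(\alpha_i-\alpha_j)\big)^{k_ik_j}$, and Lemma \ref{lem: 1stPartition} together with Proposition \ref{prop: Primepowers} shows that a prime power $P^{v}$ dividing the gcd with $P\nmid Rn$ forces a partition of $\{1,\dots,k\}$ into classes on which the $\alpha_i^n$ are congruent mod $P^{e}$ (with $e\geq v/k!^2$) and the coefficient subsums $g_{I_j}$ vanish. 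Only then does one obtain two quantities of the form $(\alpha_i/\alpha_j)^n-1$ sharing a large factor, to which Proposition \ref{prop: CZ} applies. This reduction is exactly why the theorem needs control of $k-1$ multiples rather than just $(u_n,u_{2n})$ (see the remark opening Section \ref{sec: Future}), and it is the step your plan treats as an off-the-shelf invocation.

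A second, related error is the claimed shape of the divisor: the LDS factor is not a subsum $\sum_{i\in S}\tilde g_i(n)\alpha_i^n$ supported on a subset of the original characteristic roots. It is a polynomially generated LDS built from $(\gamma^n-1)^*$ and its Galois conjugates, where $\gamma$ generates the multiplicative dependencies among the ratios $\alpha_i/\alpha_j$ within the parts of the partition (Corollary \ref{Cor: multfep2}); its characteristic roots are powers of $\gamma$, which in general are not among the $\alpha_i$. For instance $u_n=(2^n-1)(5^n+7^n)$ has characteristic roots $10,14,5,7$, while the divisor $2^n-1$ has roots $2,1$, so no choice of $S$ and $\tilde g_i$ can represent it. Finally, the obstacle you single out --- upgrading positive density to a universal factorization --- is in fact the easy part of the paper's argument: the density hypothesis is used only to produce a single $n\in\mathcal E$ with $\sum_{p\mid n,\ p>y}1/p<\epsilon$, so that the contribution of primes dividing $Rn$ is negligible; from that one $n$ the relations $\alpha_i/\alpha_j=\zeta_i\gamma^{e_i}$ are exact algebraic identities, the congruence $u_{rn}\equiv 0\pmod{\gamma^n-1}$ holds for all $r\geq 1$, and Corollary \ref{cor: Un=0} converts this into divisibility of $u_m$ by $\gamma^m-1$ on an entire progression.
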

 
 We can  also prove such a theorem when taking the gcd of two different LDS's:
 
  \begin{theorem} \label{thm: main5}
 If $(u_n)_{n\geq 0}$ and $(v_n)_{n\geq 0}$ are linear division sequences in $\mathbb Z$ for which
$\gcd(u_m,v_n)>e^{\epsilon (m+n)}$ for infinitely many pairs $m,n$, then there exists a 
 LDS $(w_n)_{n\geq 0}$ in $\mathbb Z$ and $q\in \mathbb Z_{>0}$ such that $w_n$ divides $(u_{qn},v_{qn})$ for all $n\geq 1$.
  \end{theorem}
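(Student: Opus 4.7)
My plan is to combine the structural decomposition from Theorem~\ref{thm: main} with a subspace-theorem argument in the style of Bugeaud--Corvaja--Zannier. First, apply Theorem~\ref{thm: main} to factor each of $u_n$ and $v_n$ as a periodic sequence times a power LDS times an exponential LDS times finitely many polynomially generated LDS's. Periodic factors are bounded and power LDS's grow only polynomially, so they contribute at most $n^{O(1)}$ to any gcd; these may be stripped off at the cost of shrinking $\epsilon$ by an arbitrarily small amount, while preserving the hypothesis $\gcd(u_m,v_n)>e^{\epsilon(m+n)}$ infinitely often.

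The exponential parts are handled directly. For each prime $p$ appearing in both exponential decompositions \eqref{eq: ExponentialLDS}, the $p$-part of $(u_{qn},v_{qn})$ is itself an exponential LDS obtained as the pointwise minimum of the two exponent patterns, once $q$ aligns their periods; these common $p$-parts become part of the $w_n$ being constructed. A prime appearing exponentially in only one of $u,v$ can contribute at most $O(\log n)$ to the gcd and is absorbed into the error. After this reduction we may assume both $u_n$ and $v_n$ are products of polynomially generated LDS's satisfying $\gcd(u_m,v_n)>e^{\epsilon'(m+n)}$ on an infinite set of pairs.

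By pigeonhole over the finitely many factors, a single pair $(U_m,V_n)$ of polynomially generated LDS's satisfies this bound. Writing $U_m=\prod_{\sigma\in H}F_m(\alpha^\sigma,\beta^\sigma)/q_1^{\deg F_m}$ with $F_m$ of the form \eqref{eq: f(t)inM-defn}, and $V_n$ analogously with data $(\gamma,\delta,G_n)$, both values expand as $\mathbb Z$-linear combinations of monomials in $\alpha,\beta,\gamma,\delta$ whose exponents are linear in $m,n$. The subspace theorem, in the form developed by Bugeaud--Corvaja--Zannier to bound $\gcd(a^m-1,b^n-1)$ for multiplicatively independent $a,b$, converts the exponential lower bound on $\gcd(U_m,V_n)$ into a multiplicative relation: there exist positive integers $r,s$ and a root of unity $\zeta$ with $(\alpha/\beta)^r=\zeta(\gamma/\delta)^s$, together with analogous identifications among the other generators in \eqref{eq: f(t)inM-defn}. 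Taking $q$ to be a common multiple of $r,s$, the orders of the roots of unity that appear, and both periods, the polynomially generated LDS $w_n$ built from $(\alpha/\beta)^r=\zeta(\gamma/\delta)^s$ via the recipe of Section~\ref{sec: intro} divides both $U_{qn}$ and $V_{qn}$, hence $(u_{qn},v_{qn})$.

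The main obstacle is the last paragraph, because the subspace theorem is usually formulated with a single index whereas here $m$ and $n$ vary independently. I would address this by first arguing that among the witnessing pairs $(m_k,n_k)$ the ratios $m_k/n_k$ must cluster at finitely many rationals: otherwise the dominant terms of $U_{m_k}$ and $V_{n_k}$ have heights of incompatible orders and a height-comparison argument contradicts the claimed size of their gcd. Passing to a subsequence on which $m/n$ is a fixed rational reduces the problem to a one-parameter gcd, where the standard Bugeaud--Corvaja--Zannier bounds apply and produce the multiplicative dependence. Once that dependence is in hand, the explicit construction of $w_n$ and the verification that it divides $(u_{qn},v_{qn})$ are a formal consequence of the division property and the structure of polynomially generated LDS's recalled in Section~\ref{sec: intro}.
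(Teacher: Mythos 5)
Your overall architecture matches the paper's: decompose $u_n$ and $v_n$ via Theorem~\ref{thm: main}, strip the periodic/power/exponential parts (the exponential-versus-polynomially-generated gcd being small by Lemma~\ref{PolyGenDivByp}), pigeonhole down to a single pair of polynomially generated factors, reduce to factors of the form $(\gamma^m-1)^*$ and $(\eta^n-1)^*$, extract a multiplicative dependence, and assemble $w_n$ and $q$ from it. That is exactly the paper's route.

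The genuine gap is in your last paragraph. Your proposed fix for the two-index problem --- arguing that the witnessing ratios $m_k/n_k$ ``cluster at finitely many rationals'' and then passing to a subsequence with $m/n$ a \emph{fixed rational} --- does not work. The height comparison only shows that $m/n$ lies in a fixed compact interval $[c_1,c_2]$ (since $e^{\epsilon(m+n)}\leq\gcd(U_m,V_n)\leq\min(|U_m|,|V_n|)$ bounds each of $m/n$ and $n/m$); it gives no reason for the ratios to take only finitely many values, nor to be eventually equal to any single rational, so the reduction to a one-parameter gcd is unavailable. The correct resolution, and the one the paper uses, is that no reduction is needed: the Corvaja--Zannier inequality \eqref{eq: CZ-ineq} is already a statement about two arbitrary multiplicatively independent $S$-units $u,v$, not about a single exponent. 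Taking $u=\gamma^m$ and $v=\eta^n$ (with $S$ containing the prime ideals dividing the numerators and denominators of $\gamma$ and $\eta$) gives $|N((\gamma^m-1,\eta^n-1))|\ll_{S,\delta}\max\{H(\gamma)^m,H(\eta)^n\}^{\delta}$ whenever $\gamma^m$ and $\eta^n$ are multiplicatively independent, which for small $\delta$ contradicts the lower bound $e^{\epsilon''(m+n)}$ directly, for each large witnessing pair. Hence $\gamma^m$ and $\eta^n$, and therefore $\gamma$ and $\eta$, are multiplicatively dependent, and the construction of $\tau$, $q$ and $w_n$ proceeds as you describe. With that substitution your argument closes; as written, the clustering claim is an unjustified (and in general false) step on which the conclusion rests.
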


  In both of these theorems the LDS factor can be taken to be  either an exponential or a polynomially generated LDS, and so grows exponentially fast.
   The proofs of Theorems \ref{thm: main4} and \ref{thm: main5}  are a relatively simple consequence a (deep)   application by Corvaja and Zannier \cite{CZ} of the Schmidt-Schlickiwei subspace theorem.
  In section \ref{sec: Future} we conclude the paper by discussing various developments, particularly the paper of Levin \cite{Lev},   exploring  further questions about large factors of linear recurrence sequences.

 Hall \cite{MaHa} and many subsequent authors focused on  determining all of the LDS's in $\mathbb Z$ of low order with $u_0=0$. In section \ref{sec: Loworder} we use our results to establish a method that can determine all LDS's in $\mathbb Z$  with $u_0=0$ with order up to any given bound

   We  give two different though inter-related proofs of Theorem \ref{thm: main}, each depending on  the Ritt-Gourin-MacColl theory of multinomial factorizations \cite{Rit, Gou, MacC}, and the Hadamard quotient theorem \cite{vdP, Rum}. We will see that each approach has its benefits for different applications.  Corollaries \ref{thm: main6} and \ref{thm: main2} and Theorem \ref{thm: main3} are essentially corollaries involving mostly complicated elementary number theory, but also the \v Cebotarev density theorem for the proof of  Theorem  \ref{thm: main3}

  An analogous result to Theorem \ref{thm: main} can be proved in $\mathbb C[t_1,\dots,t_m]$ with obvious modifications (since the application of the Ritt-Gourin-MacColl theory will work analogously). Analogous results to Theorems 
  \ref{thm: main4} and  \ref{thm: main5}  can be proved in $\mathbb C[t_1,\dots,t_m]$ by replacing Corvaja-Zannier \cite{CZ} by the results of Ostafe \cite{Osta}.

 \subsection*{Simplifying notation}  Every fractional ideal $I$ of $K$ may be written as $N/D$ where $N$ and $D$ are coprime integral ideals, by the unique factorization theorem for ideals. Even if $I$ is principal, one cannot necessarily determine $N$ and $D$ that are both coprime and principal (that is, the analogy of ``reduced fractions'' is complicated in number fields).  We let $I^*:=N$ be the numerator. Polynomially generated LDS's were defined as the generator of the ideal which is the product over a set of conjugates of  
 \[
 \frac{ (F_n(\alpha,\beta))}{(\alpha,\beta)^{\deg F_n}} = (f_n(\gamma))^* \text{ where } F_n(x,y)=y^{\deg F_n} f_n(x/y)
 \text{ and } \gamma=\alpha/\beta,
 \]
 which gives a different perspective on polynomially generated LDS's.
 We   say that $f_n(\gamma)$ divides $u_n$ if $(f_n(\gamma))^*$ divides $u_n$. We also define 
 $N(I):=\text{Norm}_{K/\mathbb Q}(I^*)$.
 
 For each subset $I\subset \{ 1,\dots, k\}$ let $g_I(t):=\sum_{i\in I} g_i(t)$ where the $g_i(t)$ are as defined in \eqref{eq: Formula}. Let  $ \mathcal I^*$ be the set of partitions
$ \mathcal I =I_1\cup \dots \cup I_h$ of $ \{ 1,\dots,k\}$ (with $\alpha_j\in I_j$ for $1\leq j\leq h$)
for which  each $g_{I_j}(t)=0$ and 
the elements of $ \{ \alpha_i/\alpha_j:\ i\in I_j, 1\leq j\leq h\}$ are pairwise multiplicatively dependent.\footnote{$u$ and $v$ are multiplicatively dependent if there exist non-zero integers $r$ and $s$ for which $u^r=v^s$.}
 
 We will see that for every   $\gamma$ which gives rise to a polynomially generated LDS for which  $f_n(\gamma)\ne 1$ divides $u_n$, there exists a partition  $ \mathcal I\in \mathcal I^*$ such that 
 $( \alpha_i/\alpha_j)^n=(\gamma^n)^{e_i}$ for some $e_i\in \mathbb Z$ for all $i\in I_j, 1\leq j\leq h$.


\section{History and first remarks on LDS's}

The first example of a linear division sequence in $\mathbb Z$ is the Fibonacci numbers, defined by the linear recurrence $F_{n}=F_{n-1}+F_{n-2}$ for all $n\geq 2$
with $F_0=0$ and $F_1=1$. One   also has the Mersenne numbers 
$u_n=2^n-1$, or $u_n=\frac{\alpha^n-\beta^n}{\alpha-\beta}$ for two distinct integers $\alpha,\beta$, or even for two conjugate quadratic algebraic integers $\alpha,\beta$ (that is, any \emph{Lucas sequence}).  These are all polynomially generated LDS's.
 There are other   examples like 
$u_n=a^n, u_n=n^d$ and appropriate periodic sequences,   as well as any product of already known linear division sequences.   

In 1875 \'Edouard Lucas \cite{Luc} in the first volume of the first mathematics journal to be printed in North America, wrote extensively (in French) about the local and global properties of linear recurrence sequences, mentioning the divisibility of Fibonacci numbers by one another. Such divisibility properties arose in several subsequent works, and in
1936 Marshall Hall \cite{MaHa} asked specifically about LDS's and partly classified  third-order ones (that is, $k=3$ in \eqref{eq: RecRel}). Since then it was until very recently implicitly supposed that all LDS's are a product of the simplest LDS's mentioned in the previous paragraph. Some support of this supposition was given by  B\'ezivin,   Peth\H{o} and  van der Poorten \cite{vP} who showed that any LDS \emph{divides} the product of certain LDS's of order 2, times  $cn^d$ for some integers $c\ne 0$ and $d\geq 0$, but this is not  precise enough to answer many obvious questions (like classifying all  LDS's of order 3).

To discuss LDS's we need to make, without loss of generality, some simplifying assumptions. First, we may assume that 
\[
u_1=1
\]
 since if  $(u_n)_{n\geq 0}$ is a LDS then so is $(u_n/u_1)_{n\geq 0}$
(and $u_1\ne 0$ else $u_n=0$ for all $n$).

Either $(u_n)_{n\geq 0}$ is a periodic LDS, or 
\[
u_0=0.
\]
For if  $u_0\ne 0$ then each  $u_n$ divides $u_0$, a finite set of possibilities, so there are only finitely many possibilities for 
$(u_{n-1},\dots, u_{n-k})$.  Therefore $u_{n-i}=u_{m-i}$ for $1\leq i\leq k$ for some $n>m\geq 0$, so that
$u_{n+i}=u_{m+i}$ for all $i\geq 0$ by induction using \eqref{eq: RecRel}; that is, $(u_n)_{n\geq 0}$ is periodic.  
We will prove our classification of periodic LDS's (as well as power LDS's and  exponential LDS's) at the end of this section.

We now highlight  why it has   been challenging to even guess at what the classification should be,    through several examples.   Several authors seem to have assumed that linear division sequences will be the product of 
Lucas sequences and power LDS's. If so then all periodic LDS's would be the   product
of LDS's of order two but this is easily contradicted by examples like $u_n=7$ if $3|n$ and $u_n=1$ otherwise, given by
$u_n=u_{n-3}$ for all $n\geq 3$ so that
$u_n=3+2\omega^n+2\overline{\omega}^n$ where $\omega$ is a primitive cube root of unity. 

So perhaps LDS's might all be products of Lucas sequences, periodic LDS's and power LDS's?
The Lucas sequences might not all be defined over $\mathbb Z$ so would be a product of conjugate Lucas sequences.
However this  does not  give all the possibilities as shown by an example found by  
 Peter Bala \cite{Bal}, namely $(L_nF_{3n})_{n\geq 0}$ where $L_n$ is the  sequence that  begins $2,1,3,4,\dots$ with $L_{n}=L_{n-1}+L_{n-2}$ for all $n\geq 2$, so that
\[
L_nF_{3n}=\lcm[ F_{2n}, F_{3n}] = U_n(\alpha,\beta) \text{ with } \alpha,\beta=\frac{-1\pm \sqrt{5}}2
\]
and 
 \[
 U_n(x,y)=\frac{f(x^n,y^n)}{f(x,y)} \text{ with } f(x,y)=(x+y)(x^3-y^3),
 \]
   a linear division sequence in $\mathbb Z[x,y]$. At first sight it is not obvious that  $f(x^m,y^m)$ divides $f(x^n,y^n)$ whenever $m$ divides $n$, but becomes obvious when we note that
\[
f(x,y)=  \lcm[ x^2-y^2, x^3-y^3] \text{ in } \mathbb Z[x,y].
\]
This is the main idea that was missing in previous attempts to classify LDS's, that one can take lcms of LDS's in $\mathbb C[t]$ to obtain another LDS in $\mathbb C[t]$, and then homogenize and substitute algebraic integers to obtain a LDS in $\mathbb Z$. In the next section we will classify LDS's in $\mathbb C[t]$, which will allow us to find all polynomially generated LDS's (as given in the introduction).
When taking the product over conjugate algebraic integers, there are several issues to take care of:

\subsection{Conjugates and gcd's, not norms} \label{sec: conjugates}
Substituting algebraic integers $x=\alpha, y=\beta$ into the LDS $u_n(x,y)=\frac{x^n-y^n}{x-y}$ in $\mathbb Z[x,y]$ and taking  
$\text{Norm}_{K/\mathbb Q}u_n(\alpha,\beta)$ where $K=\mathbb Q(\alpha,\beta)$ we  obtain a LDS  in $\mathbb Z$ but not necessarily the most ``basic'' LDS that could come out of this construction. For example with  $\alpha,\beta=\frac{-1\pm \sqrt{5}}2$,   the norm of  $u_n(\alpha,\beta)$ is $F_n^2$, not the simpler $F_n$.  

Let $L$ be the splitting field extension for $\alpha$ and $\beta$ over $\mathbb Q$ and $G:=\text{Gal}(L/\mathbb Q)$.
Let  $H$ be a minimally sized set of elements of $\text{Gal}(L/\mathbb Q)$ for which
$\prod_{\sigma\in H} u_n (\alpha,\beta)^\sigma \in \mathbb Z \text{ for all } n, \text{ and } \prod_{\sigma\in H} (\alpha,\beta)^\sigma \text{ is an ideal of } \mathbb Z$, and then we divide the first quantity be the second to obtain a LDS in $\Z$. 

The following family of examples, due to Lehmer \cite{Leh, HCW}, is surprising so worth mentioning:
 For      squarefree  integers $r, s=r-4d$ let  $\alpha,\beta=\frac{\sqrt{r}\pm \sqrt{s}}2$, which are two roots of an irreducible quartic polynomial in $\mathbb Z[x]$   (the other roots are $-\alpha$ and $-\beta$).  Now $u_n(\alpha,\beta)\in \mathbb Z$ if $n$ is odd, $\in \sqrt{r}\cdot \mathbb Z$ if $n$ is even, so we let $H=\{ \i, \rho\} $ where $\rho(\sqrt{r})= -\sqrt{r}$, and therefore every $u_n(\alpha,\beta)\cdot u_n(\alpha,\beta)^\rho\in \mathbb Z$. But $u_n(\alpha,\beta)^\rho =(-1)^{n-1} u_n(\alpha,\beta)$ and so $u_n(\alpha,\beta)^2$ is a Lucas sequence of order two. This has characteristic roots
 $\alpha^2,\beta^2 = \frac{r-2d\pm \sqrt{rs}}2$, the roots of an irreducible quadratic polynomial in $\mathbb Z[x]$, and  $\alpha\beta=d\in \mathbb Z$.  We call $u_n(\alpha,\beta)^2$ a \emph{Lehmer sequence}, which is evidently a (non-decomposable) LDS of order 3 (and a Lucas sequence of order 2). We will take another perspective on Lehmer sequences at the end of this section.

\medskip
   When $\alpha,\beta\in \mathbb Z$ we may assume they are coprime by dividing out by their gcd. However in a number field their gcd, the ideal $(\alpha,\beta)$, might be non-principal, in which case $u_n(\alpha,\beta)$ would be unavoidably divisible by $(\alpha,\beta)^{n-1}$. This explains why we took $q\in \mathbb Z_{>0}$ for which 
   $(q)=\prod_{\sigma\in H}  (\alpha,\beta)^\sigma$ and take the LDS
   $q^{-\deg u_n} \prod_{\sigma\in H} u_n(\alpha,\beta)^\sigma$ in $\mathbb Z$.
   
Any Lucas sequence of order 2 takes the form
\[
q^{-(n-1)} \frac{\tau_1^n-\nu_1^n}{\tau_1-\nu_1} \cdot \frac{\tau_2^n-\nu_2^n}{\tau_2-\nu_2} =q^{-(n-1)} \frac{ \alpha^n+\delta^n-\beta^n-\gamma^n} { \alpha+\delta-\beta-\gamma} ,
\]
where $(q)=(\tau_1,\nu_1)(\tau_2,\nu_2)$ with $\alpha=\tau_1\tau_2, \beta=\tau_1\nu_2, \gamma=\nu_1\tau_2$ and $\delta=\nu_1\nu_2$, so that $\alpha\delta=\beta\gamma$.   Guy and Williams \cite{GW}  considered examples where  $\alpha\delta=\beta\gamma=m\in \mathbb Z$ so that 
\[
(x-\alpha)(x-\delta)=x^2-\rho x+m,
(x-\beta)(x-\gamma)=x^2-\overline{\rho} x+m \text{ with } (x-\rho)(x-\overline{\rho})\in \mathbb Z[x].
\]
In this case one obtains an alternative (though equivalent) factorization
  \begin{equation} \label{eq: HW}
  u_n:=\frac{ \alpha^n+\delta^n-\beta^n-\gamma^n} { \alpha+\delta-\beta-\gamma}  =  \alpha^{-(n-1)}  \bigg( \frac{\alpha^n -\beta^n}{\alpha -\beta}   \bigg)\bigg( \frac{\alpha^n -\gamma^n}{\alpha -\gamma} \bigg) .
 \end{equation}
One can develop these examples starting from the last two displayed equations. If $\alpha,\beta,\gamma$ and $\delta$ are all irreducible then $u_n$ cannot be expressed without dividing through by some quantity like $\alpha^{n-1}$. For example, we have the famous example
\[
2\cdot 3 = (1+\sqrt{-5}) (1-\sqrt{-5})
\]
so that $u_n=\frac 13(2^n+3^n-(1+\sqrt{-5})^n- (1-\sqrt{-5})^n)$ is a LDS starting $0,1,7,21$ with
$u_{n+4}=7u_{n+3}-22u_{n+2}+42u_{n+1}-36u_n$ for $n\geq 4$. In terms of a Lucas sequence of order two we have
\[
u_n=3^{-(n-1)}  \bigg( \frac{3^n -(1+\sqrt{-5})^n}{2-\sqrt{-5}}   \bigg)\bigg( \frac{3^n -(1-\sqrt{-5})^n}{2+\sqrt{-5}}    \bigg) .
\]

If $\alpha,\beta=\frac{\sqrt{r}\pm \sqrt{s}}2$ where $(r,s)=1$ then
$\alpha\sqrt{r},\beta\sqrt{r}=\frac{r\pm \sqrt{D}}2$ where $D=rs$, so that $u_n(\alpha\sqrt{r},\beta\sqrt{r})= \sqrt{r}^{n-1} u_n(\alpha,\beta)$ is a Lucas sequence in $\Z$ (where $u_n(\alpha,\beta)$ is a Lehmer sequence). Although the $u_n(\alpha\sqrt{r},\beta\sqrt{r}), n\geq 1$ are stable under the action of $G$, the gcd $(\alpha\sqrt{r},\beta\sqrt{r})=\sqrt{r}$ is not, so $H$ can be taken to be  the identity together with any other $\sigma\in G$, obtaining the 
 Lehmer sequence $u_n(\alpha,\beta)^2 =u_n(\alpha\sqrt{r},\beta\sqrt{r})^2/r^{n-1}$.

\subsection{gcd's and lcm's of LDS's in $\mathbb Z$}
If $(u_n)_{n\geq 0}$ and $(v_n)_{n\geq 0}$ are division sequences in $\mathbb Z$ then so are 
$(\gcd(u_n,v_n))_{n\geq 0}$ and $(\lcm[u_n,v_n])_{n\geq 0}$. They can be linear recurrence sequences, like
$\gcd(F_{2n}, F_{3n})=F_n$ and $\lcm[F_{2n}, F_{3n}]=L_nF_{3n}$ but  not necessarily so, as we will show  with the example   $\gcd(2^n-1,F_n)$ in Theorem \ref{thm: NotALinRec}.  We now show that the gcd and lcm questions are   equivalent:
\medskip

\noindent \textbf{The Hadamard quotient theorem}   (van der Poorten and Rumely, \cite{Rum, vdP})
\emph{ If $(a_n)_{n\geq 0}$ and  $(b_n)_{n\geq 0}$ are linear recurrence sequences in the integers and
$a_n=b_nc_n$  for all $n\geq 0$ where each $c_n\in \mathbb Z$ then $(c_n)_{n\geq 0}$ also satisfies a linear recurrence.}\footnote{Corvaja and Zannier \cite{CZ2} proved the much stronger result that if $b_n$ divides $a_n$ for infinitely many $n$ then there exists a polynomial $f(x)$ and a linear recurrence $(c_n)_{n\geq 0}$ in $\mathbb Z$ for which $f(n)a_n=b_nc_n$  for all $n$ in a suitable arithmetic progression. The example $a_n=2^n-2, b_n=n+2^n+(-2)^n$ shows that the $f$ can be necessary when taking $n$ to be odd. This also follows from the recent work of Levin \cite{Lev} which we discuss   in section \ref{sec: Future}.}
\medskip

 Taking $a_n=u_nv_n$ with $b_n= (u_n,v_n)$ or $[u_n,v_n]$ we deduce that 
 $(u_n,v_n)$ satisfies a linear recurrence if and only if $[u_n,v_n]$ does.

\subsection{Classifying periodic LDS's}
Suppose that $(u_n)_{n\geq 0}$ is a LDS in $\mathbb Z$ of period $M$. Evidently $u_d$ divides $u_D$ whenever $d$ divides $D$ divides $M$.  If $(a,M)=d$ then there exist integers $A$ and $B$ such that $a=Ad$ and $aB\equiv d \pmod M$, so that $u_d$ divides $u_{Ad}=u_a$, which divides $u_{aB}=u_d$ (by periodicity), and so $|u_a|=|u_d|$.
We therefore obtain the classification given in \eqref{eq:  PeriodicLDS}.

\subsection{Classifying power LDS's}
Here  $u_n=n^{e_a}$ where the $e_a$ have period $M$.
Let $p$ be a large prime $\equiv 1 \pmod M$. If $d$ divides $a$  and $(a,p)=1$ then
 $u_{dp}$  divides $u_{ap}$, and so 
$e_d=v_p(u_{dp})\leq v_p(u_{ap})=e_a$ (where we define $v_p(p^vr/s)=v$ when $p\nmid rs$ and $v\in \mathbb Z$).
Moreover $e_d\geq 0$ as $u_{dp}$ is an integer. Taking $a=D|M$ we deduce that 
$0\leq e_d\leq e_D$ whenever $d|D|M$. Now suppose $(a,M)=d$ so as in the last paragraph
$u_{dp}$ divides $u_{ap}$ which divides $u_{aBp}$, so that 
$e_d\leq e_a\leq e_d$ as $aBp\equiv d\cdot 1\equiv d \pmod M$, so that $e_a=e_d$.
 Therefore we have 
\[
u_n = n^{e_d}   \text{ where } d=(n,M), 
\]
which is almost the classification in \eqref{eq: PowerLDS}; there we adjust the constant outside by dividing through by $d^{e_d}$. This is a LDS since $n^{e_d}$ divides $(np)^{e_D}$ where $D=(dp,M)$ which divides $dp$, and so
 $(n/d)^{e_d}=(np/dp)^{e_d}$ divides $(np/D)^{e_d}$ which divides $(np/D)^{e_D}$ as $e_d\leq e_D$.

\subsection{Classifying exponential LDS's}
Here  $u_n=m_a^{k}$ where $n=a+kM$ for $1\leq a\leq M$ and $k\geq 0$.
We will study divisibility for each prime $p$, so if $v_p(m_a)=h_{p,a}$ then
$u_{n,p}=p^{kh_{p,a}}$ for $n=a+kM$ (so that $u_n=\prod_p u_{n,p}$) must be a division sequence.

Now if $b\equiv ra \pmod M$ with $1\leq a,b,r\leq M$ then $u_{a+kM,p}$ divides $u_{ra+rkM,p}=u_{b+\ell M}$ where $\ell=rk+ [\frac{ra}m]$, and so
\[
 kh_{p,a} = v_p(  u_{a+kM}) \leq  v_p( u_{b+\ell M} ) =\ell h_{p,b}  
\]
and so, dividing through by $k$, we get $h_{p,a} \leq rh_{p,b} +O(1/k)$.  Letting $ k\to \infty$
we obtain  $h_{p,a} \leq rh_{p,b}$ as these are all integers. We therefore deduce that $u_n$ is given by \eqref{eq: ExponentialLDS2}.

 \section{The period of a linear recurrence sequence} \label{sec: Period}
In \eqref{eq: Formula} we write $g_i(t)=\sum_{j=0}^{k_i-1} g_{i,j} t^j $ where the  $g_{i,j}$ are algebraic numbers.  Given the $\alpha_i$ one can determine the $g_{i,j}$ from $u_0,\dots,u_{k-1}$, and vice-versa.  
 
 \subsection{A multiplicative basis for the roots of the characteristic polynomial}
 In an example like $1^n-2^n-3^n+6^n$ we see that the $n$th powers, $1^n, 2^n, 3^n$ and $6^n$, are all multiplicatively dependent on $2^n$ and $3^n$. In general we can  find a multiplicative basis, though we need to be careful about ``torsion'' (for example, for $1+5^n+(-5)^n$ we have the ``basis'' $5^n$ and $(-1)^n$, the latter being ``torsion'').  One can write each
   \[
  \alpha_i = \zeta_M^{e_{i,0}} \gamma_1^{e_{i,1}}\cdots \gamma_r^{e_{i,r}}
 \]
where the $\gamma_i$ are multiplicatively independent with each $e_{i,j} \in \mathbb Z$, and the torsion multipliers are all powers of $\zeta_M$, a primitive $M$th root of unity, for some $M\geq 1$.  To determine the $\gamma_i$'s and the $e_{i,j}$'s we can take logarithms of all of the multiplicative dependencies between the $\alpha_i$'s and use linear algebra to find a basis.  This  argument  implies that each $\gamma_i$ can be written as an $M$th root of unity  times some product of $\alpha_j$'s.
 
 Therefore  
we can rewrite  \eqref{eq: Formula} as $u_n=U_a(n,\gamma_1^n,\dots,\gamma_r^n)$ for  every $n\equiv a \pmod M$ where
  \begin{equation} \label{eq: Polyn}
U_a(x,y_1,\dots,y_r)= \sum_{i=1}^m g_i(x)  \zeta_M^{e_{i,0}a} y_1^{e_{i,1}}\cdots y_r^{e_{i,r}},
 \end{equation}
and then $u_n$ is determined by this fixed polynomial for all integers $n$ in the congruence class $a\pmod M$. This explains why the period, $M$, is a key element in our considerations.  We  simplify by  combining the terms in \eqref{eq: Polyn} for which the $(e_{i,1},\dots,e_{i,r})$ exponent vectors are identical (though we don't change the notation, so this same sum now has a modified meaning).

The exponents $e_{i,j}$ are integers but some might be negative. Therefore we can write
\[
U_a(x,y_1,\dots,y_r)=x^{e_0} y_1^{e_1}\cdots y_r^{e_r} \, P_a(x,y_1,\dots,y_r)
\]
where     $e_j=\min_i e_{i,j}$ for all $j\geq 1$ with $x^{e_0}$ the largest power of $x$ that divides each $g_i(x)$, so that $P_a$ is a polynomial, with a constant term in each variable.

The set of linear recurrences over $\mathbb Z$ evidently form a commutative ring, and include any periodic sequence of integers since 
\[
1_{n\equiv a \pmod M} = \frac 1M \sum_{\zeta: \zeta^M=1}  \zeta^{-a} \cdot \zeta^n .
\]
Each individual $(U_a(n,\gamma_1^n,\dots,\gamma_r^n))_{n\geq 0}$ ($1\leq a \leq M$)  is a linear recurrence sequence, as they each take the form
 \eqref{eq: Formula}. Moreover we can recover $(u_n)_{n\geq 0}$ from them all by taking
 \[
u_n = \sum_{a=1}^{M}  \,  1_{n\equiv a \pmod M}  U_a(n,\gamma_1^n,\dots,\gamma_r^n).
 \]
Thus we can study the structure of each $U_a(x,y_1,\dots,y_r)$ separately and still recover the whole.

\subsection{Products of linear recurrence sequences}
 If $(u_n)_{n\geq 0}$ and  $(v_n)_{n\geq 0}$ are linear recurrence sequences then write $u_n$ as in \eqref{eq: Formula} and similarly
$v_n= \sum_{j=1}^\ell h_j(n) \beta_j^n$.  Now let the $\gamma_i$ be a multiplicative basis for the non-torsion part of the $\alpha_i$'s and $\beta_j$'s, so we can write $u_n=U_a(n,\gamma_1^n,\dots,\gamma_r^n)$ and $v_n=V_a(n,\gamma_1^n,\dots,\gamma_r^n)$ for all $n\equiv a \pmod M$ for a suitable value of $M$. We then see that if $w_n=u_nv_n$ for all $n\geq 0$ then $(w_n)_{n\geq 0}$ is also a linear recurrence sequence (as we have a representation as in \eqref{eq: Formula} where the roots of its characteristic polynomial are a subset of the $\alpha_i\beta_j$-values).  Moreover we can define
$w_n=W_a(n,\gamma_1^n,\dots,\gamma_r^n)$ where
 \begin{equation} \label{eq: Id}
W_a(x,y_1,\dots,y_r)=U_a(x,y_1,\dots,y_r)V_a(x,y_1,\dots,y_r).
 \end{equation}

The Hadamard quotient theorem states that if $(w_n)_{n\geq 0}$ and  $(u_n)_{n\geq 0}$ are linear recurrence sequences for which
$w_n=u_nv_n$  for all $n\geq 0$ where each $v_n\in \mathbb Z$ then $(v_n)_{n\geq 0}$ is also a linear recurrence sequence. We   therefore deduce that  \eqref{eq: Id} holds.

\section{Polynomial LDS's}   \label{sec: NeedPolyLDS}

We are interested in simple LDS's in $\mathbb C[t]$ (or equivalently in $\mathbb C[x,y]^{\text{homogenous}}$)   in which all of the roots of the characteristic polynomial are multiplicatively dependent on $t$. These are the product of an exponential LDS (which takes account of powers of $t$ in each term) and
a LDS $(u_n(t))_{n\geq 0}$  with $u_0(t)=0$ and $u_n(0)\ne 0$ for all $n\geq 1$ of  period $M$, which can be written as
 \begin{equation} \label{eq: GenPolyform}
u_n(t) =\kappa_a(t) f_a(t^n) \text{ whenever } n\equiv a \pmod M \text{ with } n\geq 1
 \end{equation}
where each $f_a(t)\in \mathbb C[t]$ with $f_a(0)\ne 0$ and $\kappa_a(t)\in \mathbb C(t)$ . In this section we will determine all possibilities, which we will use in determining all polynomially generated LDS's in $\mathbb Z$.

\subsection{Polynomial LDS's of period 1} These must be multiples of
$u_n(t) = f(t^n)/f(t)$ for all $n\geq 1$ with $u_0(t)=0$, where $f(t^m)$  divides $f(t^n)$ in $\mathbb C[t]$ whenever $m$ divides $n$.
We have already seen the examples  $f(t)=t-1$ and    $f(t)= (t+1)(t^3-1)= \lcm[ t^2-1, t^3-1]$ but there are many more:

 \begin{prop} \label{prop: 1 variable} Let $f(x)\in \mathbb C[x]$ with $f(0)\ne 0$. Then
 $f(x^m)$  divides $f(x^n)$ in $\mathbb C[x]$ whenever $m$ divides $n$ if and only if $f(x)$ takes the form
 \[
   \underset{\, m\geq 1}{\lcm}\, (x^{m}-1)^{h_m}
 \]
 for some  $h_m\in \mathbb Z_{\geq 0}$, only finitely many  of which are non-zero.
  \end{prop}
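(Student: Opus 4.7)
My plan splits along the two implications.

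For the easy ("if") direction, I would first verify that the substitution ring endomorphism $\phi_a: \mathbb{C}[x] \to \mathbb{C}[x]$, $g(x) \mapsto g(x^a)$, commutes with $\gcd$ and $\lcm$. This follows because $\phi_a$ both preserves and reflects divisibility: the roots of $\phi_a(g)$ are exactly the $a$-th roots of the (nonzero) roots of $g$, each with the same multiplicity, and this root-multiplicity description translates divisibility of $\phi_a(P)$ by $\phi_a(Q)$ back to divisibility of $P$ by $Q$. Given this, if $f(x)=\lcm_m (x^m-1)^{h_m}$ and $a\mid b$, then $f(x^a)=\lcm_m (x^{am}-1)^{h_m}$ and $f(x^b)=\lcm_m (x^{bm}-1)^{h_m}$; since $(x^{am}-1)\mid(x^{bm}-1)$ termwise, divisibility of lcm's gives $f(x^a)\mid f(x^b)$.

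For the ("only if") direction, I would first pin down the roots of $f$. Taking $m=1$ in the hypothesis gives $f(x)\mid f(x^n)$ for every $n\geq 1$; if $\alpha$ is a root of $f$, this forces $\alpha^n$ to be a root of $f$ for all $n\geq 1$. Since $f$ has only finitely many roots, the orbit $\{\alpha,\alpha^2,\dots\}$ must cycle, so $\alpha$ is a root of unity (and $f(0)\ne 0$ rules out $\alpha=0$). Thus I may write $f(x)=c\prod_d \Phi_d(x)^{e_d}$ with finitely many nonzero $e_d$.

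Next I would extract the key inequality on the exponents. For any root $\alpha$ of $f$ of multiplicity $\mu(\alpha)$, the multiplicity of $\alpha$ as a root of $f(x^n)$ equals $\mu(\alpha^n)$, because $\alpha$ is a simple root of $x^n-\alpha^n$. The relation $f\mid f(x^n)$ therefore gives $\mu(\alpha)\leq \mu(\alpha^n)$ for every $n\geq 1$. Applied to $\zeta$ a primitive $d$-th root of unity with $\gcd(n,d)=1$, iteration in both directions (replacing $n$ by an inverse modulo $d$) forces $\mu(\zeta)=\mu(\zeta^n)$, so $\mu$ depends only on $d$ — justifying the notation $e_d$. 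Applied with $n$ chosen so that $\zeta^n$ becomes a primitive $d'$-th root of unity for any chosen $d'\mid d$, it yields
\[
d'\mid d \ \Longrightarrow\ e_d \leq e_{d'}.
\]

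Finally I would reconstruct the lcm representation. Setting $h_m:=e_m$, the identity $x^m-1=\prod_{d\mid m}\Phi_d(x)$ gives
\[
\underset{m\geq 1}{\lcm}\,(x^m-1)^{h_m}=\prod_{d}\Phi_d(x)^{\max_{m:\,d\mid m} e_m},
\]
and the monotonicity $d\mid m\Rightarrow e_m\leq e_d$ ensures the maximum is attained at $m=d$, so the right-hand side equals $\prod_d \Phi_d(x)^{e_d}=f(x)$ (up to the constant $c$, which I would normalize away using $f(0)\ne 0$). The main technical obstacle is the commutation of $\phi_a$ with lcm in the easy direction; once that is cleanly established via the root-multiplicity calculation, everything else is elementary cyclotomic bookkeeping.
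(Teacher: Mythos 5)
Your proof is correct and follows essentially the same route as the paper's: roots must be roots of unity by the finite-orbit argument, the multiplicity is constant on primitive $d$-th roots of unity and monotone under divisibility of the order, and the stated $\lcm$ form is then reassembled from the cyclotomic factorization $x^m-1=\prod_{d\mid m}\Phi_d(x)$. You are a bit more explicit than the paper --- you actually prove the easy ``if'' direction, which the paper only asserts, and you derive $\mu(\alpha)\leq\mu(\alpha^n)$ from $\alpha$ being a simple root of $x^n-\alpha^n$ where the paper says ``taking derivatives'' --- but the substance is identical.
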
   

Here  the lcm is in $\mathbb C[x]$. Any such sequence $(f(x^n))_{n\geq 0}$ is a linear division sequence.

 \begin{proof} Since $f(x)$ divides $f(x^k)$ we see that if $f(\alpha)=0$ then $f(\alpha^k)=0$ for all $k\geq 1$ and so we must have
 $\alpha^j=\alpha^i$ for some $j>i$, else $f(x)$ would have infinitely many distinct roots. Therefore   $\alpha$ is a root of unity, as $\alpha\ne 0$ by the hypothesis.  If $\alpha$ is a primitive $m$th root of unity then $f(\alpha^k)=0$ for every $k,1\leq k\leq m$, and so all the $m$th roots of unity are roots of $f(x)$; that is, $x^m-1$ divides $f(x)$. Taking derivatives one finds  that the multiplicity is the same for each primitive $m$th root of unity, and at least as large otherwise.  Therefore there exists an integer $a_m$ such that $\phi_m(x)^{a_m}\| f(x)$ (where 
  $\phi_m(x)$ is  the $m$th cyclotomic polynomial) and  $a_n\geq a_m$ whenever $n$ divides $m$, so that 
 $f(x)=c  \prod_{m\leq M} \phi_m(x)^{a_m}$. This is a  result of Sergiy  Koshkin \cite{Kosh}   and our result follows
by taking each $b_n=a_n$, and dividing out by $(x-1)^{a_1}$.
  \end{proof}

\subsection{Polynomial LDS's of period M}  

We begin by showing that we can divide through by $t^d-1$ when $(n,M)=d$ in the simplest example;

  \begin{lemma} \label{lem: changePLDS}
  For any   $M\in \mathbb Z_{>1}$ the sequence $u_n(t)=\frac{t^n-1}{t^{(n,M)}-1}$ is a LDS in $\mathbb Z[t]$.  
    \end{lemma}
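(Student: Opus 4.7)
The plan is to verify the three defining properties: that $u_n(t) \in \mathbb{Z}[t]$, that $(u_n(t))_{n \geq 0}$ is a linear recurrence sequence, and that $u_m(t) \mid u_n(t)$ whenever $m \mid n$. The main tool will be the cyclotomic factorization $t^k-1 = \prod_{j\mid k} \Phi_j(t)$, which reduces divisibility to a combinatorial condition.

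First I would rewrite
\[
u_n(t) = \frac{t^n-1}{t^{(n,M)}-1} = \prod_{\substack{j\mid n \\ j\,\nmid\, (n,M)}} \Phi_j(t).
\]
Since $j\mid (n,M)$ is equivalent to $j\mid n$ and $j\mid M$, the index set simplifies to $\{j : j\mid n,\ j\nmid M\}$. Thus
\[
u_n(t) = \prod_{\substack{j\mid n \\ j\,\nmid\, M}} \Phi_j(t),
\]
which is manifestly a polynomial in $\mathbb{Z}[t]$, proving (i). For the division property (iii), suppose $m\mid n$ and let $\Phi_j$ be any cyclotomic factor of $u_m(t)$, so $j\mid m$ and $j\nmid M$. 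Then $j\mid n$ and $j\nmid M$, so $\Phi_j$ appears in $u_n(t)$; since distinct cyclotomic polynomials are coprime, we conclude $u_m(t) \mid u_n(t)$.

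For the linear recurrence statement (ii), I would split off the periodic factor. The sequence $(t^n-1)_{n\geq 0}$ is a linear recurrence in $n$ with characteristic roots $t$ and $1$. The factor $1/(t^{(n,M)}-1)$ depends on $n$ only through $(n,M)$ and hence is purely periodic in $n$ with period $M$; any $M$-periodic sequence is a linear recurrence with characteristic roots among the $M$-th roots of unity (as already noted in the paper, $1_{n\equiv a\pmod M}=\frac{1}{M}\sum_{\zeta^M=1}\zeta^{-a}\zeta^n$). Applying the general fact that the product of two linear recurrence sequences is a linear recurrence sequence (equation \eqref{eq: Id} in the preceding section), we conclude that $(u_n(t))_{n\geq 0}$ is a linear recurrence with characteristic roots contained in $\{t\zeta, \zeta : \zeta^M=1\}$.

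There is no real obstacle here: everything reduces to the elementary identity $\{j\mid n : j\mid (n,M)\} = \{j\mid n : j\mid M\}$, together with the product-of-recurrences principle. The only care needed is checking the $n=0$ edge case, where the convention $u_0(t)=0$ is consistent with the formula (since $t^0-1 = 0$), so the stated recurrence holds for all $n\geq 0$.
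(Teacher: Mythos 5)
Your proof is correct, and it takes a cleaner, more direct route than the paper's. Both arguments rest on the cyclotomic factorization $t^k-1=\prod_{j\mid k}\phi_j(t)$, but the organization is genuinely different. The paper does not write down a closed form for $u_n(t)$; instead it proves only the divisibility step $u_n\mid u_{np}$ for a prime $p$, splitting into two cases according to whether $p$ divides $M/d$ with $d=(n,M)$: in the first case $(np,M)=d$ and the divisibility is immediate from $t^n-1\mid t^{np}-1$, while in the second case it reduces to showing $(t^{dp}-1)(t^{dN}-1)$ divides $(t^d-1)(t^{Ndp}-1)$ by tracking which cyclotomic factors occur on each side. Your key observation --- that $\{j: j\mid n,\ j\nmid (n,M)\}=\{j: j\mid n,\ j\nmid M\}$, so that $u_n(t)=\prod_{j\mid n,\ j\nmid M}\phi_j(t)$ with all multiplicities equal to one --- collapses both the polynomiality and the divisibility into a containment of index sets, avoiding the case analysis entirely. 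You also explicitly verify the linear-recurrence property (via the product of the recurrence $(t^n-1)_{n\ge 0}$ with an $M$-periodic sequence, giving characteristic roots among $\{t\zeta,\zeta:\zeta^M=1\}$ and hence a recurrence such as $u_{n+2M}=(t^M+1)u_{n+M}-t^Mu_n$ with coefficients in $\mathbb{Z}[t]$), a point the paper leaves implicit. The trade-off is minor: the paper's prime-step argument is the template it reuses in the more general setting of Lemma \ref{lem: variant1} and Theorem \ref{thm: genF(t)}, where a clean closed-form index set is less readily available, whereas your description is the sharper statement for this particular sequence.
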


  \begin{proof}  Write $t^n-1=\prod_{m|n} \phi_m(t)$  where $\phi_m(t)$ is the $m$th cyclotomic polynomial., which has no repeated factors, and let $u_n=(t^n-1)/(t^{(n,M)}-1)$.  We need to show that $u_n$ divides $u_{np}$ for any prime $p$ and integer $n$.
  Let $d=(n,M)$. If $p\nmid M/d$ then $(np,M)=d$ and so $(t^d-1)u_n=t^n-1$ which divides $t^{np}-1=(t^d-1)u_{np}$ and the result follows.
  If $p|M/d$ write $n=Nd$ so that $(M/d,N)=1$ and therefore $p\nmid N$.  
  Any factor $\phi_m(t)$ of either $t^{dp}-1$ or $t^{dN}-1$ must divide $t^{Ndp}-1$. If $\phi_m(t)$ divides both $t^{dp}-1$ and $t^{dN}-1$ then
  $m$ divides $(dN,dp)=d(N,p)=d$ and so $\phi_m(t)$ also divides  $t^{d}-1$. Either way $(t^{dp}-1)(t^{dN}-1)$ divides $(t^{d}-1)(t^{Ndp}-1)$ and so
  $u_n$ divides $u_{np}$.
    \end{proof}

     \begin{lemma} \label{lem: variant1} Let $f(x)\in \mathbb C[x]$ be monic with $f(0)\ne 0$. Then
 $f(x)$  divides $f(x^k)$ in $\mathbb C[x]$ for all integers $k\equiv 1 \pmod M$ 
 if and only if  $f(x)$ takes the form 
  \begin{equation} \label{eq: U(t)-form}
 \underset{\substack{\ m\geq 1 \\ \,  0\leq j\leq M-1 }}\lcm   (\zeta_{M}^j x^m-1)^{h_{m,j}}
 \end{equation}
 for some  $h_{m,j}\in \mathbb Z_{\geq 0}$, only finitely many  of which are non-zero.
 \end{lemma}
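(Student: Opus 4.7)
The plan is to mirror Proposition \ref{prop: 1 variable}, keeping track of the extra ``twist'' by $\zeta_M$. For the ``if'' direction, substitution $x\mapsto x^k$ commutes with taking lcm in $\mathbb C[x]$ (both sides have the same multiplicity at each point), so it suffices to check that each factor $(\zeta_M^j x^m-1)^{h_{m,j}}$ of $f$ divides its image $(\zeta_M^j x^{mk}-1)^{h_{m,j}}$ in $f(x^k)$. This reduces to a one-line root calculation: if $\alpha^m=\zeta_M^{-j}$ then $\alpha^{mk}=\zeta_M^{-jk}=\zeta_M^{-j}$ by $jk\equiv j\pmod M$; since the image factor has simple roots, multiplicities carry over.

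For the ``only if'' direction, I would first adapt the finite-orbit argument of Proposition \ref{prop: 1 variable} to show that every root $\alpha$ of $f$ is a root of unity: all $\alpha^{1+Ms}$ with $s\geq 0$ are roots of $f$, so finiteness of the root set forces $\alpha^{M(s_1-s_2)}=1$ for some $s_1\neq s_2$. Now fix a root $\alpha$ of order $N$ and multiplicity $r$, and set $d:=\gcd(M,N)$, $m_0:=N/d$. Since $\alpha^{m_0}$ has order dividing $d$ (and $d\mid M$), one can write $\alpha^{m_0}=\zeta_M^{-j_\alpha}$ for a unique $j_\alpha\in\{0,\dots,M-1\}$. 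The key structural claim is that the forward orbit of $\alpha$ under $x\mapsto x^k$, $k\equiv 1\pmod M$, is exactly the zero set of $\zeta_M^{j_\alpha} x^{m_0}-1$: indeed $\{k\bmod N:k\equiv 1\pmod M\}=\{1+dt\bmod N\}$ because $M\mathbb Z+N\mathbb Z=d\mathbb Z$, so the orbit equals $\alpha\cdot\langle\alpha^d\rangle$, which is $\alpha$ times the group of $m_0$-th roots of unity, that is, $\{\beta:\beta^{m_0}=\alpha^{m_0}\}$. The divisibility hypothesis then forces $f$ to have multiplicity $\geq r$ at every point of this orbit, so $F_\alpha:=(\zeta_M^{j_\alpha} x^{m_0}-1)^r$ divides $f$.

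Setting $g:=\underset{\alpha}{\lcm}\,F_\alpha$ over all roots $\alpha$ of $f$, the direction $g\mid f$ is immediate from the divisibilities $F_\alpha\mid f$, while $f\mid g$ holds because each root $\beta$ of $f$ appears in $F_\beta$ with multiplicity exactly $r_\beta$. Hence $f=g$ has the claimed lcm form with $h_{m,j}:=\max\{r_\alpha:m_{0,\alpha}=m,\ j_\alpha=j\}$ (and zero if no such $\alpha$). I expect the main obstacle to be the orbit identification: the set of $k\in\mathbb Z$ with $k\equiv 1\pmod M$ is only a multiplicative semigroup modulo $N$, not a group, so it is not a priori obvious that the forward orbit of $\alpha$ already fills a full coset of a subgroup; verifying this via the Bezout-type identity $M\mathbb Z+N\mathbb Z=d\mathbb Z$ is the crux of the argument.
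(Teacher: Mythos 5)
Your proof is correct and follows essentially the same route as the paper: every root is forced to be a root of unity, the forward orbit of a root of order $N$ under $x\mapsto x^k$, $k\equiv 1\pmod M$, is identified (via $M\mathbb Z+N\mathbb Z=d\mathbb Z$, which is exactly the paper's observation that the residues $1+iM \pmod r$ for $0\leq i<r/m$ are distinct) with the zero set of $\zeta_M^{j}x^{N/d}-1$, and the factors are assembled by an lcm. Your explicit bookkeeping of multiplicities via $h_{m,j}=\max r_\alpha$ is a clean substitute for the paper's appeal to ``handling powers of factors as in Proposition \ref{prop: 1 variable}'', and correctly accommodates the fact that multiplicities need not be constant along an orbit.
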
    
 
 Since $\zeta_{M}^j x^m-1$ divides $\zeta_{M}^{kj} x^{km}-1$, it is convenient to include the terms
 $(\zeta_{M}^j x^m-1)^{h_{km,J}}$ where $J\equiv kj \pmod M$ into the lcm  if $h_{km,J}\ne 0$ (and we will do so henceforth).  Once we do this we have $h_{m,j}\geq h_{km,kj}$ for all integers $k\geq 1$.

\begin{proof} We proceed much like in the proof of Proposition \ref{prop: 1 variable}.
 Suppose $f(\alpha)=0$ with $\alpha\ne 0$ then  $f(\alpha^k)=0$ for all  $k\equiv 1 \pmod M$ so   $\alpha$ is a  root of unity, say a
 primitive $r$th root of unity. Now $\alpha^{1+iM}$ is a root of $f(x)$ for all integers $i$, and the $1+iM \pmod r$ are distinct 
 for $0\leq i< r/m$ where   $m=(r,M)$. Therefore
 \[
 \prod_{i=0}^{r/m-1} (x-\alpha^{1+iM}) = x^{r/m} - \alpha^{r/m} \text{ is a factor of } f(x),
 \]
 and $\alpha^{r/m}$ is a primitive $m$th root of unity, and so an $M$th root of unity.
 Handling powers of factors as in the proof of Proposition \ref{prop: 1 variable}
  the result follows in one direction; and in the other since $(\zeta_M^{h_i})^k=\zeta_M^{h_i}$.
 \end{proof}  
 
 We now  classify simple LDS's in $\mathbb C[t]$:

 \begin{theorem} \label{thm: genF(t)}  
 $(u_n(t))_{n\geq 0}$ in $\mathbb C[t]$     is a simple LDS in $\mathbb C[t]$
of period $M$ for which all of the roots of the characteristic polynomial are multiplicatively dependent on $t$  if and only if it is the product of a periodic LDS, an exponential LDS (in powers of $t$) and some LDS $(v_n(t))_{n\geq 0}$, each in $\mathbb C[t]$ with period dividing $M$, where
  \begin{equation}  \label{eq: vnform}
v_n(t) =   \underset{\substack{\ m\geq 1 \\ \,  0\leq j\leq M/d-1 }}\lcm  
\bigg( \frac{(\zeta_{M}^{j} t^m)^n-1}  {(\zeta_{M}^{j} t^m)^d-1}    \bigg)^{h_{d,m,j}}
 \end{equation}
for all $n$ with $(n,M)=d$, where the $h_{d,m,j}\in \mathbb Z_{\geq 0}$ with only finitely many non-zero,
satisfying the inequalities
\[
h_{D,m,j}\geq h_{d,km,kj \text{ mod } {M/d}} \text{ whenever } d \text{ divides } D, \text{ and } k\geq 1.
\]
 \end{theorem}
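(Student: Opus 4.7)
The plan is to strip off periodic and exponential LDS factors using the elementary arguments of Section 2, apply Lemma \ref{lem: variant1} to the resulting sequence one residue class modulo $M$ at a time, and finally impose the stated inequalities on the exponents $h_{d,m,j}$ by comparing across residue classes.

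\textbf{Step 1 (periodic and exponential factors).} From the representation $u_n(t) = \kappa_a(t) f_a(t^n)$ for $n \equiv a \pmod M$ in \eqref{eq: GenPolyform}, the constants $\kappa_a(0)$ form a periodic sequence in $a$ satisfying the divisibility conditions \eqref{eq: PeriodicLDS} inherited from the LDS property, giving a periodic LDS factor. Similarly the $t$-adic valuations $v_t(u_n)$ fit the constraints derived at the end of Section 2 in the classification of exponential LDS's, yielding an exponential LDS factor in powers of $t$. After dividing out both and further absorbing the values $u_d(t)$ for $d \mid M$ into the periodic part, one reduces to the case $v_d(t) = 1$ for every $d \mid M$ and $v_n(0) \ne 0$ for $n \geq 1$.

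\textbf{Step 2 (residue-class analysis via Lemma \ref{lem: variant1}).} Fix $d \mid M$ and $a$ with $(a, M) = d$. For every $k \geq 0$ we have $a \mid a(1 + kM/d)$ and $a(1 + kM/d) \equiv a \pmod M$, so writing $v_n(t) = g_a(t^n)$ on this residue class, the LDS property becomes $g_a(s) \mid g_a(s^{1 + kM/d})$ with $s = t^a$. This is the hypothesis of Lemma \ref{lem: variant1} with $M/d$ replacing $M$, so $g_a(s)$ is an lcm of factors $(\zeta_{M/d}^j s^m - 1)^{h_{d, m, j}}$ for $0 \leq j < M/d$. Substituting $s = t^a$, writing $\zeta_{M/d} = \zeta_M^d$, and using the normalization $v_d = 1$ yields the displayed shape \eqref{eq: vnform}.

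\textbf{Step 3 (cross-class inequalities and converse).} The inequalities $h_{D,m,j} \geq h_{d, km, kj \bmod M/d}$ for $d \mid D \mid M$ and $k \geq 1$ arise from tracking a given irreducible factor of $v_n$ under the LDS property as $n$ varies across residue classes: the exponent $k$ measures how the index $(m, j)$ in one class transports to the index $(km, kj \bmod M/d)$ in another, since $(\zeta_M^j t^m)^k = \zeta_M^{jk} t^{mk}$ and $jk$ must be reduced modulo $M/d$ to lie in the indexing range. Conversely, every $v_n(t)$ of the form \eqref{eq: vnform} satisfying these inequalities is an LDS, since each inner ratio $\frac{(\zeta_M^j t^m)^n - 1}{(\zeta_M^j t^m)^d - 1}$ is an LDS by Lemma \ref{lem: changePLDS} applied after the substitution $u \leftarrow \zeta_M^j t^m$, and LDS's are closed under lcm and product.

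\textbf{Main obstacle.} The most error-prone part is the bookkeeping in Step 3. For a fixed $(m, j)$ and varying $k$, one must identify exactly which factor of each $v_n$ arises from the specified $(m, j)$-factor, and the identification depends on how the $M$-th root-of-unity index $j$ interacts with multiplication by $k$ modulo $M/d$; getting the convention consistent across all divisors $d \mid D \mid M$ simultaneously is what ensures a single tensor $(h_{d,m,j})$ describes every $v_n$ uniformly. Once the re-indexing is correctly set up, both directions of the equivalence follow routinely from Lemmas \ref{lem: variant1} and \ref{lem: changePLDS}, with Step 1 being a direct application of the Section 2 classifications.
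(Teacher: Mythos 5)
Your Step 2 contains the real gap. When you apply Lemma \ref{lem: variant1} to the restriction of the sequence to a single residue class $a \pmod M$, the exponents you obtain a priori depend on $a$ itself, yet you immediately label them $h_{d,m,j}$ with $d=(a,M)$. Nothing in your argument shows that two residues $a\ne b$ with $(a,M)=(b,M)=d$ produce the \emph{same} exponent tensor; the classes $a$ and $b$ are only linked by divisibilities such as $u_B \mid u_{A}$ for $B\equiv b$, $A\equiv a \pmod M$ with $B\mid A$, and exploiting these is a genuine step. The paper does it by choosing $A_1\equiv A_2\equiv a \pmod M$, both divisible by $B\equiv b \pmod M$, with $A_1/B$, $A_2/B$ and an auxiliary modulus $H$ pairwise coprime, so that $u_B(t)$ divides $\gcd(u_{A_1}(t),u_{A_2}(t))$ and this gcd collapses to the ``level $B$'' factors; sandwiching in both directions gives $h_{a,m,j}=h_{b,m,j}=h_{d,m,j}$. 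Without this (or an equivalent) argument the single tensor $(h_{d,m,j})$ indexed by divisors of $M$ is not justified.

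The same omission affects your Step 1: you assert that the prefactors give a periodic LDS, but what must be proved is that the polynomial $\kappa_a(t)$ equals a constant times $\kappa_{(a,M)}(t)$ — periodicity of the sequence $a\mapsto\kappa_a$ is trivial, whereas the relation $\kappa_a=\sigma_a\kappa_d$ of \eqref{eq: PeriodicLDS} at the level of polynomials in $t$ is not, and the paper again needs the coprime-gcd device (with $B_1\equiv B_2\equiv b$, $(B_1,B_2)=d$) to prove $\kappa_d(t)\mid\kappa_a(t)$ and conversely. Your Steps 2 (for a fixed class) and 3 (the inequalities $h_{pd,m,j}\ge h_{d,m,j}$ from $v_n\mid v_{pn}$, and the converse via Lemma \ref{lem: changePLDS} and closure under lcm and products) do match the paper's route, so the proposal is the right skeleton; it is missing the uniformity-in-$a$ argument that is the technical heart of the paper's proof.
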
    
 
 Here  $v_n(t)$ is the lcm of simpler LDS's given by Lemma \ref{lem: changePLDS}: That is, sequences
 $( \frac{y^n-1}  {y^d-1} )^{h_d}$ where $y=\zeta_{M}^{j} t^m$ and $h_d=h_{d,m,j}$ with $h_D\geq h_d$ whenever $d|D|M$.
 
 \begin{proof}
 Let $d=(a,M)$.
 We apply Lemma \ref{lem: variant1} with $f(x)=f_a(x^a)$ and $M$ replaced by $M/d$  for $f_a(t)$ as in 
 \eqref{eq: GenPolyform}, and writing $\zeta_{M/d}=\zeta_M^a$ we obtain
  that 
  \[
u_n(t) =\kappa_a(t)    \underset{\substack{\ m\geq 1 \\ \,  0\leq j\leq M/d-1 }}\lcm  
\bigg( \frac{(\zeta_{M}^{j} t^m)^n-1}  {(\zeta_{M}^{j} t^m)^d-1}    \bigg)^{h_{a,m,j}}
 \]
  whenever  $n\equiv a \pmod M$ as $\zeta_{M/d}^j=\zeta_M^{aj}=\zeta_M^{nj}$, with the definition of $\kappa_a(t)$ adjusted from  \eqref{eq: GenPolyform} (by multiplying through by the denominator here).
  Written like this, $u_d(t)=\kappa_d(t)$ is a polynomial.  
  Moreover $\kappa_d(t)$ divides $\kappa_D(t)$ whenever $d|D|M$.
  
  Let $H=M\cdot \prod (m: \text{ some } h_{*,m,*}\ne 0\}$.
  Select integers $a,b$ with $(a,M)=(b,M)=d$.
  Take any $B\equiv b \pmod M$ and 
  select   integers $A_1\equiv A_2 \equiv a \pmod M$ divisible by $B$
  with $A_1/B, A_2/B$ and $H$ pairwise coprime.
 Therefore
 \[
 ((\zeta_{M}^{j} t^m)^{A_1}-1,(\zeta_{M}^{j'} t^{m'})^{A_2}-1)=
  ((\zeta_{M}^{j} t^m)^{B}-1,(\zeta_{M}^{j'} t^{m'})^{B}-1),
 \]
 and so  $u_B(t)$ divides $(u_{A_1}(t), u_{A_2}(t))$ which divides 
  \[
 \kappa_a(t)    \underset{\substack{\ m\geq 1 \\ \,  0\leq j\leq M/d-1 }}\lcm  
\bigg( \frac{(\zeta_{M}^{j} t^m)^B-1}  {(\zeta_{M}^{j} t^m)^d-1}    \bigg)^{h_{a,m,j}}.
 \]
Therefore each $h_{b,m,j}\leq h_{a,m,j}$ and, by the analogous argument, 
$h_{a,m,j}\leq h_{b,m,j}$, so that each $h_{a,m,j}= h_{d,m,j}$ selecting $b=d$.
Moreover if $B=d$ then 
$u_d(t)=\kappa_d(t)$  divides $(u_{A_1}(t), u_{A_2}(t))=\kappa_a(t)$.

Now take $a=d$ above and select $B_1\equiv B_2\equiv b \pmod D$ with 
$(B_1,B_2)=d$ and $B_1/d, B_2/d$ and $H$ pairwise coprime. Therefore $\kappa_b(t)$ divides $u_b(t)$, which divides 
$(u_{B_1}(t), u_{B_2}(t))$ which, by the last displayed equation, divides $\kappa_d(t)$ since we have
$((\zeta_{M}^{j} t^m)^{B_1}-1,(\zeta_{M}^{j'} t^{m'})^{B_2}-1)=
  ((\zeta_{M}^{j} t^m)^{d}-1,(\zeta_{M}^{j'} t^{m'})^{d}-1)$. Now taking $b=a$ we have 
 $\kappa_d(t)$  divides $\kappa_a(t)$ and vice-versa, so that $\kappa_a(t) = \kappa_d(t)$ times a constant.
 Therefore   the $(\kappa_n(t))_{n\geq 0}$ form a periodic LDS in $\mathbb C[t]$, so we can write 
 \[
 u_n(t) =\kappa_a(t)  v_n(t)
 \]
 when $n\equiv a \pmod M$ with $(n,M)=d$ and $v_n(t)$ as in \eqref{eq: vnform}.

 If $pd$ divides $M$ with $p$ prime then $v_n(t)$ divides $v_{pn}(t)$ whenever $n\equiv d \pmod M$ so, by comparing terms,
 \[
 h_{pd,m,j} \geq \max \{  h_{d,m,j} ,  h_{d,pm,pj} \} = h_{d,m,j}.
 \]
The result follows.
   \end{proof}

We will deduce the full classification of such LDS's in $\mathbb C[t]$ in Corollary \ref{cor: genF(t)}.
\smallskip

One might hope that the LDS's   in \eqref{eq: vnform} might all be of the form \eqref{eq: BuildingLDS}, and therefore be constructed as a product of simpler LDS's. However this is not true as we see from the example with 
$u_n(t)=\frac{t^{3n}-1}{t^n-1}$ if $5\nmid n$, and $u_n(t)=\lcm[ \frac{t^{2n}-1}{t^n-1},\frac{t^{3n}-1}{t^n-1}]=\frac{(t^n+1)(t^{3n}-1)}{t^n-1}$ if $5|n$, since $t^n+1$ is not an LDS. One could however rewrite \eqref{eq: vnform} as the lcm of slightly simpler LDS's.

\section{Vandermonde type determinants}    \label{sec: Vandermonde}

We now introduce a way to study the structure of a linear division sequences.

\begin{prop} \label{pr: coolio} 
If $(x_n)_{n\geq 0}$ is a  recurrence sequence of algebraic integers or polynomials satisfying  \eqref{eq: RecRel} then
\[
\gcd(x_0,x_n,\dots,x_{(k-1)n})  \text{ divides } \gcd(x_0,\dots,x_{k-1})  \prod_{j=1}^m   (n\alpha_j^{n-1}) ^{\binom{k_j}2}\cdot      \prod_{1\leq i<j\leq m} \bigg( \frac{\alpha_i^n-\alpha_j^n}{\alpha_i-\alpha_j} \bigg)^{k_ik_j}.
 \]
\end{prop}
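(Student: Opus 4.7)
The plan is to view both tuples $(x_0,x_n,\dots,x_{(k-1)n})$ and $(x_0,x_1,\dots,x_{k-1})$ as linear images of a common coefficient vector, and then to compare them via Cramer's rule. Writing $g_i(t)=\sum_{j=0}^{k_i-1}g_{i,j}t^j$ as in \eqref{eq: Formula} gives $x_\ell=\sum_{i,j}g_{i,j}\ell^j\alpha_i^\ell$ and $x_{\ell n}=\sum_{i,j}g_{i,j}(\ell n)^j\alpha_i^{\ell n}$. Let $\vec g$ denote the length-$k$ vector of the $g_{i,j}$'s, and introduce the $k\times k$ ``confluent Vandermonde'' matrices $V_1[\ell,(i,j)]:=\ell^j\alpha_i^\ell$ and $V_n[\ell,(i,j)]:=(\ell n)^j\alpha_i^{\ell n}$, with row index $\ell\in\{0,\dots,k-1\}$ and column index $(i,j)$ running over pairs with $1\le i\le m$, $0\le j\le k_i-1$. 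Then $(x_0,x_1,\dots,x_{k-1})^T=V_1\vec g$ and $(x_0,x_n,\dots,x_{(k-1)n})^T=V_n\vec g$. Since the $\alpha_i$ are distinct and nonzero (the latter because $c_k\ne 0$), $V_1$ is invertible, so $M:=V_nV_1^{-1}$ satisfies $(x_0,x_n,\dots,x_{(k-1)n})^T=M\,(x_0,\dots,x_{k-1})^T$.

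The crucial observation is that $M$ has entries in the base ring (the integers, or $\mathbb C[t]$ in the polynomial case). Indeed, for each fixed $i$, choosing the initial conditions $(x_0,\dots,x_{k-1})$ equal to the $i$-th standard basis vector produces, via the recurrence \eqref{eq: RecRel}---whose coefficients $c_1,\dots,c_k$ lie in the base ring---a sequence all of whose terms lie in the base ring; in particular $x_0,x_n,\dots,x_{(k-1)n}$ do, and this tuple is by definition the $i$-th column of $M$. Hence $M$, and therefore $\op{adj}(M)$, have entries in the base ring. Applying Cramer's rule to $\vec x_1=M^{-1}\vec x_n$, namely $\det(M)\cdot\vec x_1=\op{adj}(M)\cdot\vec x_n$, one concludes that $\det(M)\cdot x_j$ lies in the ideal $(x_0,x_n,\dots,x_{(k-1)n})$ for every $j=0,\dots,k-1$. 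Therefore $\gcd(x_0,x_n,\dots,x_{(k-1)n})$ divides $\det(M)\cdot\gcd(x_0,\dots,x_{k-1})$, which is the claimed divisibility.

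It remains to identify $\det(M)=\det(V_n)/\det(V_1)$ with the explicit product in the statement. Pulling the factor $n^j$ out of column $(i,j)$ of $V_n$ extracts $n^{\sum_i\binom{k_i}{2}}$, leaving a matrix with entries $\ell^j\beta_i^\ell$ where $\beta_i:=\alpha_i^n$. A standard argument---combining the vanishing at $\beta_i=\beta_j$ (order $k_ik_j$, by the usual confluent Vandermonde reasoning), the vanishing at $\beta_i=0$ (order $\binom{k_i}{2}$, because the $k_i$ columns indexed by $i$ can absorb at best the rows $\ell=0,1,\dots,k_i-1$, contributing $\beta_i^{0+1+\dots+(k_i-1)}$), and the total degree count $\binom{k}{2}$ in the $\beta_i$'s---shows the residual determinant equals $C\prod_i\beta_i^{\binom{k_i}{2}}\prod_{i<j}(\beta_i-\beta_j)^{k_ik_j}$ for a nonzero constant $C=\prod_i 0!\,1!\cdots(k_i-1)!$ depending only on the multiplicities. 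The same constant $C$ appears in $\det(V_1)$ and cancels in the ratio, leaving $\det(M)=\prod_j(n\alpha_j^{n-1})^{\binom{k_j}{2}}\prod_{i<j}\bigl((\alpha_i^n-\alpha_j^n)/(\alpha_i-\alpha_j)\bigr)^{k_ik_j}$, as required. The only mildly delicate step is verifying the confluent Vandermonde identity, but the degree-plus-vanishing-order sketch above handles it in a few lines.
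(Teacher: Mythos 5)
Your proposal is correct and is essentially the paper's own argument: your matrix $M=V_nV_1^{-1}$, with columns given by the fundamental solutions $u^{(\ell)}$ of the recurrence started at standard basis vectors, is exactly the paper's matrix $U_n$, and your adjugate/Cramer step is the same linear algebra as the paper's column-replacement computation $\det X_{n,\ell}=x_\ell\det U_n$. The only difference is cosmetic: the paper evaluates the confluent Vandermonde determinant by explicit differentiation (Lemma \ref{lem: Vand1} and Corollary \ref{cor: Vand1}), whereas you invoke the standard degree-plus-vanishing-order argument for the same identity.
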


We deduce that if $(u_n)_{n\geq 0}$ is a linear division sequence with $u_1=1$ then 
\[
u_n \text{ divides }   \prod_{j=1}^m   (n\alpha_j^{n-1}) ^{\binom{k_j}2}\cdot      \prod_{1\leq i<j\leq m} \bigg( \frac{\alpha_i^n-\alpha_j^n}{\alpha_i-\alpha_j} \bigg)^{k_ik_j}
 \]
 by Proposition \ref{pr: coolio}, which is the main theorem of B\'ezivin et al  \cite{vP}. (Our proof is a modification of  Barbero \cite{Bar}, which is substantially easier than the proof in \cite{vP}.)

\subsection{Beyond Vandermonde}
We begin by reproving the main results from \cite{FH}, arguably a little more easily.
Let $v(x)$ denote the column vector $(1,x,\ldots,x^{k-1})^T$ and so, differentiating each entry,
\[
v^{(j)}(x)= \frac 1{j!} \bigg( \frac{d}{dx} \bigg)^{j} v(x) =\bigg(0,0,\ldots, \binom{j}j , \binom{j+1}j x\dots , \binom{k-1}j x^{k-1-j} \bigg)^T .
\]
 
\begin{lemma} \label{lem: Vand1}
Let $x_1,\ldots,x_m$ be independent variables and $k_1+\cdots+k_m=k$. Then the determinant of the $k$-by-$k$ matrix $M$ given by appending the column vectors,
\[
M= \big(  v^{(0)}(x_1),\ldots, v^{(k_1-1)}(x_1),    \ldots,  v^{(0)}(x_m),\ldots, v^{(k_m-1)}(x_m) \big)
\]
equals 
\[
\pm   \prod_{1\leq i<j\leq m} (x_i-x_j)^{k_ik_j} .
\]
\end{lemma}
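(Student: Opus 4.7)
\medskip

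\noindent\textit{Proof plan.} The cleanest route is to obtain the confluent Vandermonde as a limit of the ordinary Vandermonde, by letting a cluster of $k_i$ distinct points collapse to $x_i$. Introduce a parameter $\epsilon$ and auxiliary variables
\[
y_{i,\ell} = x_i + \ell \epsilon \quad (1 \le i \le m,\ 0 \le \ell \le k_i - 1),
\]
which are $k = k_1+\cdots+k_m$ distinct points for generic $\epsilon$. The usual Vandermonde identity says
\[
\det\bigl(v(y_{1,0}),v(y_{1,1}),\ldots,v(y_{m,k_m-1})\bigr) \;=\; \prod_{(i,\ell)<(j,\ell')}\bigl(y_{j,\ell'}-y_{i,\ell}\bigr),
\]
in any fixed ordering of the indices $(i,\ell)$.

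Next I would perform column operations within each cluster. Within the $i$th block of $k_i$ columns, iterated finite differencing replaces the $\ell$th column $v(y_{i,\ell})$ by $\Delta^\ell_\epsilon v(x_i)/\ell!$, which is a $\mathbb Z$-linear combination of earlier columns and therefore does not change the determinant. Subsequently scaling this column by $1/\epsilon^\ell$ divides the determinant by $\epsilon^\ell$ and yields a column equal to $v^{(\ell)}(x_i) + O(\epsilon)$, by the standard fact that $\Delta^\ell_\epsilon v(x_i)/(\ell!\,\epsilon^\ell) \to v^{(\ell)}(x_i)$. After performing these operations on every cluster, the resulting matrix tends as $\epsilon\to 0$ to the matrix $M$ of the statement, and the total scaling factor we have divided by is
\[
\prod_{i=1}^{m}\prod_{\ell=0}^{k_i-1}\epsilon^{\ell} \;=\; \prod_{i=1}^{m}\epsilon^{\binom{k_i}{2}}.
\]

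Finally I would compare the two sides. Split the Vandermonde product into intra-cluster and inter-cluster factors. For fixed $i$ and $0\le \ell<\ell'\le k_i-1$ we get $y_{i,\ell'}-y_{i,\ell}=(\ell'-\ell)\epsilon$, whose product over $\ell<\ell'$ equals $\epsilon^{\binom{k_i}{2}}\prod_{\ell=0}^{k_i-1}\ell!$, and the $\epsilon$-powers precisely cancel the scaling above (the numerical factors $\prod_\ell \ell!$ contribute the overall $\pm$ sign). For $i<j$ and any $\ell,\ell'$ we have $y_{j,\ell'}-y_{i,\ell}=(x_j-x_i)+(\ell'-\ell)\epsilon\to x_j-x_i$, and there are $k_ik_j$ such factors. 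Letting $\epsilon\to 0$ the left side approaches $\det M$ and the right side approaches $\pm\prod_{i<j}(x_i-x_j)^{k_ik_j}$, which is the claim. The only delicate bookkeeping is in the intra-cluster count (making sure that $\binom{k_i}{2}$ really matches between the scaling and the diagonal Vandermonde factor), but this is the same numerical coincidence $0+1+\cdots+(k_i-1)=\binom{k_i}{2}$ on both sides, so it is automatic.
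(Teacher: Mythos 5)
Your argument is correct and lands on the right identity, but it travels a slightly different road from the paper's. Both proofs degenerate the generic Vandermonde determinant at a confluent configuration of points; the paper does this by introducing independent variables $x_{i,j}$, writing $M=DV|_{x_{i,j}=x_i}$ for a product $D$ of differential operators, and then carefully applying $D$ to the closed-form Vandermonde product with repeated use of the product rule, whereas you work with the actual perturbed points $x_i+\ell\epsilon$, perform finite-difference column operations, and let $\epsilon\to 0$. Your version trades the paper's induction on which factors survive differentiation for an elementary continuity argument, at the cost of having to track the powers of $\epsilon$ and the factorials; the paper's version stays entirely inside polynomial identities and needs no limit.

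One piece of your bookkeeping is internally inconsistent, though it self-corrects. Replacing the column $v(y_{i,\ell})$ by $\Delta_\epsilon^\ell v(x_i)/\ell!$ is \emph{not} determinant-preserving: adding the $\mathbb Z$-combination of earlier columns preserves the determinant, but the division by $\ell!$ multiplies it by $1/\ell!$. The correct accounting is that the total factor you divide out is $\prod_i\prod_{\ell=0}^{k_i-1}\ell!\,\epsilon^{\ell}$, and this is \emph{exactly} equal to the intra-cluster part of the Vandermonde product, since
\[
\prod_{0\leq \ell<\ell'\leq k_i-1}\bigl(y_{i,\ell'}-y_{i,\ell}\bigr)=\epsilon^{\binom{k_i}{2}}\prod_{0\leq \ell<\ell'\leq k_i-1}(\ell'-\ell)=\epsilon^{\binom{k_i}{2}}\prod_{\ell=0}^{k_i-1}\ell!\,.
\]
So the factorials cancel outright rather than "contributing the overall $\pm$ sign"; the sign comes only from the choice of writing $(x_i-x_j)$ versus $(x_j-x_i)$. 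With that correction the limit gives $\det M=\prod_{i<j}(x_j-x_i)^{k_ik_j}$ exactly as claimed, and the proof is complete.
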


\begin{proof}  For independent variables $x_{i,j}$, we construct   the Vandermonde matrix 
\[
V:= (v(x_{1,0}),\ldots,v(x_{1,k_1-1}),\ldots,v(x_{m,0}),\ldots,v(x_{m,k_m-1})) ,
\]
so that $M= DV  |_{\text{Each } x_{i,j}=x_i}$ where $D$ is the differential operator
 \[
\frac 1{1!} \frac{d}{dx_{1,1}} \cdots  \frac 1{(k_1-1)!}  \bigg( \frac{d}{dx_{1,k_1-1}}\bigg)^{k_1-1}  \cdots 
\frac 1{1!}  \frac{d}{dx_{m,1}}  \cdots   \frac 1{(k_m-1)!}  \bigg( \frac{d}{dx_{m,k_m-1}}\bigg)^{k_m-1}.
 \]
 Since taking the determinant is a linear function in the entries of each column vector,
 \[
 \det M = D \det V\bigg|_{\text{Each } x_{i,j}=x_i} = D \prod_{\substack{1\leq i\leq m \\ 0\leq j\leq k_i-1 \\ (i,j)\ne (i',j') }} (x_{i,j}-x_{i',j'})\bigg|_{\text{Each } x_{i,j}=x_i}
 \]
 We work first with the $x_{1,j}$-variables, the others are analogous. Letting $x_{1,0}=x_1$ our polynomial takes the form
 \[
\pm \prod_{1\leq \ell \leq k_1-1}  (x_{1,\ell}-x_1)\cdot  \prod_{1\leq \ell< j\leq k_1-1} (x_{1,\ell}-x_{1,j}) \cdot \prod_{0\leq \ell \leq k_1-1} \prod_{\substack{2\leq i\leq m \\ 0\leq j\leq k_i-1  }} (x_{1,\ell}-x_{i,j}) \cdot P(x_{i,j}: i\geq 2),
 \]
 where $P(x_{i,j}: i\geq 2)$ is a polynomial in the $x_{i,j}$ with $i\geq 2$.

For $\ell=1$ we have $(x_{1,1}-x_1)$ together with a product of terms $(x_{1,1}-x_{*,*})$. We differentiate wrt $x_{1,1}$ using the product rule and then let $x_{1,1}=x_1$.  Unless we have differentiated out the $(x_{1,1}-x_1)$-term then we get $0$ when we take $x_{1,1}=x_1$. Hence our determinant becomes
 \[
\pm \prod_{2\leq \ell \leq k_1-1}  (x_{1,\ell}-x_1)^2\cdot  \prod_{2\leq \ell< j\leq k_1-1} (x_{1,\ell}-x_{1,j}) \cdot \prod_{0\leq \ell \leq k_1-1} \prod_{\substack{2\leq i\leq m \\ 0\leq j\leq k_i-1  }} (x_{1,\ell}-x_{i,j}) \cdot P(x_{i,j}: i\geq 2)
 \]
 since  $x_{1,\ell}-x_{1,1}=x_{1,\ell}-x_{1}$ for each $\ell\geq 2$.  For $\ell=2$ we differentiate twice wrt $x_{1,2}$ and then take $x_{1,2}=x_1$.
 We see using the product rule that unless this is applied each time to $(x_{1,2}-x_1)^2$ we get $0$; therefore our determinant, now divided by $2!$, becomes
 \[
\pm  \prod_{3\leq \ell \leq k_1-1}  (x_{1,\ell}-x_1)^3\cdot  \prod_{3\leq \ell< j\leq k_1-1} (x_{1,\ell}-x_{1,j}) \cdot \prod_{0\leq \ell \leq k_1-1} \prod_{\substack{2\leq i\leq m \\ 0\leq j\leq k_i-1  }} (x_{1,\ell}-x_{i,j}) \cdot P(x_{i,j}: i\geq 2)
 \]
  since  $x_{1,\ell}-x_{1,2}=x_{1,\ell}-x_{1}$ for each $\ell\geq 3$.   We now proceed by induction on $\ell,  1\leq \ell\leq k_1-1$   applying  $ ( \frac{d}{dx_{1,\ell}} )^{\ell}$ and then letting $x_{1,\ell}=x_1$. When we have finished differentiating with respect to our $x_{1,\ell}$ variables, we end up with 
  \[
\pm      \prod_{\substack{2\leq i\leq m \\ 0\leq j\leq k_i-1  }} (x_1-x_{i,j})^{k_1} \cdot P(x_{i,j}: i\geq 2)
  \]
 Repeating this argument with each variable we end up with the claimed result.
 \end{proof}

 Let 
 \[
 v_{(j)}(x) := \frac 1{j!} \bigg( x\frac{d}{dx} \bigg)^{j} v(x)=\frac 1{j!} (0^j,1^jx,2^j x^2\ldots,(k-1)^jx^{k-1})^T.
 \]
   
 \begin{corollary} \label{cor: Vand1} Let $x_1,\ldots,x_m$ be independent variables and $k_1+\cdots+k_m=k$. Let 
 $M_{k_1,\dots,k_m}(x_1,\dots,x_m)$ be the  $k$-by-$k$ matrix $M$ given by appending the column vectors.
\[
M_{k_1,\dots,k_m}(x_1,\dots,x_m)= \bigg(  v_{(0)}(x_1),\ldots, v_{(k_1-1)}(x_1),    \ldots,  v_{(0)}(x_m),\ldots, v_{(k_m-1)}(x_m) \bigg)
\]
Then 
\[
\det M_{k_1,\dots,k_m}(x_1,\dots,x_m) = \pm    \ x_j^{\binom{k_j}2}\cdot      \prod_{1\leq i<j\leq m} (x_i-x_j)^{k_ik_j} .
\] 
\end{corollary}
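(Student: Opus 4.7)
The plan is to reduce Corollary \ref{cor: Vand1} to Lemma \ref{lem: Vand1} via an upper triangular change of basis between the column families $\{v^{(j)}(x)\}$ and $\{v_{(j)}(x)\}$. The key observation is that the Euler operator and the plain derivative are related by the classical Stirling-number identity
\[
\bigg(x\frac{d}{dx}\bigg)^{j} \;=\; \sum_{i=0}^{j} S(j,i)\, x^{i}\bigg(\frac{d}{dx}\bigg)^{i},
\]
where $S(j,i)$ are Stirling numbers of the second kind, with $S(j,j)=1$. Applying both sides to $v(x)$ and dividing by $j!$ yields
\[
v_{(j)}(x) \;=\; \sum_{i=0}^{j} \frac{i!\, S(j,i)}{j!}\, x^{i}\, v^{(i)}(x),
\]
so that $v_{(j)}(x)$ is a linear combination of $v^{(0)}(x),\ldots,v^{(j)}(x)$ whose coefficient on $v^{(j)}(x)$ is exactly $x^{j}$.

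Next, I would group the columns of $M_{k_1,\ldots,k_m}(x_1,\ldots,x_m)$ into $m$ consecutive blocks, the $\ell$-th block being $v_{(0)}(x_\ell),\ldots, v_{(k_\ell-1)}(x_\ell)$. The identity above shows that on this $\ell$-th block, passing from the columns $v^{(0)}(x_\ell),\ldots,v^{(k_\ell-1)}(x_\ell)$ (the columns used in Lemma \ref{lem: Vand1}) to the columns $v_{(0)}(x_\ell),\ldots,v_{(k_\ell-1)}(x_\ell)$ amounts to right-multiplying by an upper triangular $k_\ell\times k_\ell$ matrix with diagonal entries $x_\ell^{0}, x_\ell^{1},\ldots, x_\ell^{k_\ell-1}$. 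Its determinant is
\[
\prod_{j=0}^{k_\ell-1} x_\ell^{j} \;=\; x_\ell^{\binom{k_\ell}{2}}.
\]
Assembling these blocks across $\ell=1,\ldots,m$, the full change of basis is block upper triangular (in fact block diagonal) with determinant $\prod_{\ell=1}^{m} x_\ell^{\binom{k_\ell}{2}}$.

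Finally, by multiplicativity of the determinant,
\[
\det M_{k_1,\ldots,k_m}(x_1,\ldots,x_m) \;=\; \prod_{\ell=1}^{m} x_\ell^{\binom{k_\ell}{2}} \cdot \det \big(v^{(0)}(x_1),\ldots, v^{(k_m-1)}(x_m)\big),
\]
and invoking Lemma \ref{lem: Vand1} on the right-hand determinant yields the claimed formula, with the overall sign inherited unchanged. There is no genuine obstacle here; the only point that needs care is to verify that the triangular coefficient matrix really is upper triangular with the asserted diagonal (that is, that the leading Stirling coefficient $S(j,j)=1$ survives the factor $i!/j!$ precisely when $i=j$), after which the argument is purely formal.
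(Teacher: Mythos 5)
Your proposal is correct and follows essentially the same route as the paper: the paper also establishes that $v_{(j)}(x)=\sum_{i\le j}a_{j,i}x^{i}v^{(i)}(x)$ with $a_{j,j}=1$ (via the recurrence $a_{k,j}=ja_{k-1,j}+a_{k-1,j-1}$, which is exactly the Stirling recurrence you invoke in closed form), then performs the same triangular column change extracting $x_\ell^{\binom{k_\ell}{2}}$ from each block and reduces to Lemma \ref{lem: Vand1}. The only cosmetic difference is that you cite the classical Stirling-number identity for the Euler operator rather than rederiving the recurrence by induction.
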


\begin{proof} One can show by induction that $v_{(k)}(x) = \sum_{j=0}^k a_{k,j} x^jv^{(j)}(x) $ for some constants $a_{k,j}$. For $k=0$ we have
$a_{0,0}=1$. Otherwise
\[
\sum_{j=0}^k a_{k,j} x^jv^{(j)}(x)  = v_{(k)}(x) = x\frac{d}{dx}v_{(k-1)}(x)  = \sum_{j=0}^{k-1} a_{k-1,j} x\frac{d}{dx} (x^jv^{(j)}(x) )
\]
\[
= \sum_{j=0}^{k-1} a_{k-1,j} (   j x^jv^{(j)}(x) ) + x^{j+1}v^{(j+1)}(x) ) = \sum_{j=0}^k (ja_{k-1,j}+ a_{k-1,j-1}) x^jv^{(j)}(x)
\]
where $a_{k-1,k}=0$ and so $a_{k,j} =ja_{k-1,j}+ a_{k-1,j-1}$ with each $a_{k,k}=1$ by induction. Hence the matrix columns
$(v_{(0)}(x),\ldots, v_{(k-1)}(x))$ can be replaced, in calculating the determinant, by $(v(x), xv^{(1)}(x),\ldots, x^{k-1}v^{(k-1)}(x))$, which equals
$x^{\binom k2}$ times the determinant which now has matrix columns $(v(x), v^{(1)}(x),\ldots,  v^{(k-1)}(x))$. The result follows from Lemma 1.
 \end{proof}

 \begin{proof} [Proof of Proposition \ref{pr: coolio}] 
 Let  $(u_n^{(\ell)})_{n\geq 0}$ for $\ell=0,1,\dots, k-1$ be the recurrence sequences satisfying
 \eqref{eq: RecRel}  with 
 \[
 u_r^{(\ell)}=\delta_{\ell,r} \text{ (the Dirac delta function) for } 0\leq r\leq k-1. 
 \]
 We are interested in the matrix 
 \[
U_n:= 
\begin{pmatrix}
u_0^{(0)}& u_0^{(1)} & \dots &u_0^{(k-1)}\\
u_{n}^{(0)} & u_{n}^{(1)}& \dots &   u_{n}^{(k-1)} \\
 \dots & \dots & \dots & \dots \\
u_{(k-1)n}^{(0)}   & u_{(k-1)n}^{(1)} & \dots & u_{(k-1)n}^{(k-1)} \\
\end{pmatrix} \, .
\]
 If for  convenience we write 
 \[
 u_r^{(\ell)} = \sum_{i=1}^m \sum_{j=0}^{k_i-1} e_{i,j}^{(\ell)} \frac{r^j}{j!} \alpha_i^r .
 \]
 then  $U_n=M_{k_1,\dots,k_m}(\alpha_1^n,\dots,\alpha_m^n)\cdot D_n\cdot E$ where 
\[
E=
 \begin{pmatrix}
e_{1,0}^{(0)} & e_{1,0}^{(1)} & \dots &  e_{1,0}^{(k-1)}\\
e_{1,1}^{(0)}  & e_{1,1}^{(1)} & \dots & e_{1,1}^{(k-1)} \\
 \dots & \dots & \dots & \dots \\
e_{m,k_m-1}^{(0)}  & e_{m,k_m-1}^{(1)} & \dots & e_{m,k_m-1}^{(k-1)}  \\
\end{pmatrix} \text{ and } 
D_n= \begin{pmatrix}
 n^0&   0& \dots &   0\\
0 &  n^1& \dots &  0\\
 \dots & \dots & \dots & \dots \\
0&  0& \dots &  n^{k_m-1} \\
\end{pmatrix}.
\]
Moreover $U_1$ is the identity matrix so that $\det U_1=1$ and therefore
\[
\det U_n= \frac{\det M_{k_1,\dots,k_m}(\alpha_1^n,\dots,\alpha_m^n) } {\det M_{k_1,\dots,k_m}(\alpha_1,\dots,\alpha_m) }
=  \prod_{j=1}^m   (n\alpha_j^{n-1}) ^{\binom{k_j}2}\cdot      \prod_{1\leq i<j\leq m} \bigg( \frac{\alpha_i^n-\alpha_j^n}{\alpha_i-\alpha_j} \bigg)^{k_ik_j} .
\]
by Corollary \ref{cor: Vand1}  (which is Theorem 1 of \cite{Bar}).

 Any recurrence sequence $(x_n)_{n\geq 0}$   satisfying  \eqref{eq: RecRel}  can be written as
 \[
 x_n=\sum_{\ell=0}^{k-1} x_\ell u_n^{(\ell)} \text{ for all } n\geq 0.
 \]
 For a given integer $n$ let $G_n:=\gcd(x_0,x_n,\dots,x_{(k-1)n})$.
 Let $X_{n,\ell}$ denote the matrix $U_n$ with the $\ell$th column replaced by $(x_0,x_{n},\dots,x_{(k-1)n})^T$, so that 
 $G_n$ divides $\det X_{n,\ell}$. We can compute this determinant by subtracting $x_i$  times column $i$ from column $\ell$ for each $i\ne \ell$, which will leave $x_\ell$ times the $\ell$th column of $U_n$. Therefore $\det  X_{n,\ell} = x_\ell \det U_n$ and so $G_n$ divides $x_\ell \det U_n$ for $0\leq j\leq k-1$. Combining these we obtain
 \[
 \gcd(x_0,x_n,\dots,x_{(k-1)n})  \text{ divides } \gcd(x_0,\dots,x_{k-1}) \det U_n
 \]
and the result follows.
\end{proof}

\subsection{Zeros of $(u_n)_{n\geq 0}$}
We deduce an important Corollary in the algebra of linear recurrence sequences.

\begin{corollary} \label{cor: Un=0}  
Let $(u_n)_{n\geq 0}$ be a linear recurrence sequence with period $M$.  If $u_{A+j\ell}=0$ for $j=0,\dots,k-1$
where $A\equiv a \pmod M$ with $1\leq a\leq M$ and $M$ divides $\ell>0$ then $U_a(x,y_1,\dots,y_r)=0$.
\end{corollary}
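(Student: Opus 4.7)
The plan is to consider the subsequence $v_j := u_{A+j\ell}$ for $j \geq 0$. Since $M \mid \ell$, every $A + j\ell$ lies in the congruence class $a \pmod M$, so the representation given at the start of Section~\ref{sec: Period} applies throughout: $v_j = U_a(A+j\ell, \gamma_1^{A+j\ell}, \dots, \gamma_r^{A+j\ell})$. Substituting $n = A+j\ell$ into $u_n = \sum_i g_i(n) \alpha_i^n$ also gives $v_j = \sum_i g_i(A+j\ell)\, \alpha_i^A (\alpha_i^\ell)^{j}$, a polynomial-exponential expression of total weight $\leq k$; hence $(v_j)_{j\geq 0}$ is itself a linear recurrence of order at most $k$. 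The hypothesis $v_0 = v_1 = \cdots = v_{k-1} = 0$ therefore forces $v_j = 0$ for every $j \geq 0$.

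Next I would expand $v_j$ using the monomial form of $U_a$:
\begin{equation*}
v_j \;=\; \sum_i g_i(A+j\ell)\, \zeta_M^{e_{i,0}a}\, \Big(\prod_s \gamma_s^{e_{i,s} A}\Big)\, \Big(\prod_s (\gamma_s^{\ell})^{e_{i,s}}\Big)^{j} \;=\; \sum_i h_i(j)\, \beta_i^{j},
\end{equation*}
where $h_i(j) := g_i(A+j\ell)\, \zeta_M^{e_{i,0}a} \prod_s \gamma_s^{e_{i,s} A}$ is a polynomial in $j$ and $\beta_i := \prod_s (\gamma_s^{\ell})^{e_{i,s}}$. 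The combining step in the construction of $U_a$ guarantees that the exponent vectors $(e_{i,1},\dots,e_{i,r})$ are pairwise distinct; since $\gamma_1,\dots,\gamma_r$ are multiplicatively independent, so are their $\ell$-th powers, and hence the bases $\beta_i$ are pairwise distinct.

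Since $\sum_i h_i(j)\beta_i^{j} = 0$ identically in $j$ with the $\beta_i$ distinct, the standard uniqueness of polynomial-exponential representations forces every $h_i$ to be the zero polynomial. As the prefactor $\zeta_M^{e_{i,0}a}\prod_s \gamma_s^{e_{i,s} A}$ is a nonzero scalar, this yields $g_i(A+j\ell) = 0$ for every $j \geq 0$, so $g_i$ has infinitely many zeros and is therefore identically $0$. Consequently every monomial of $U_a$ vanishes, which is exactly $U_a(x, y_1, \dots, y_r) \equiv 0$.

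The only place where care is really needed is in verifying the two structural ingredients: that the order of $(v_j)_{j\geq 0}$ is at most $k$, and that the bases $\beta_i$ are distinct. The first is immediate from the closed-form for $u_n$ (the subsequence inherits the decomposition with bases $\alpha_i^{\ell}$ and multiplicities at most $k_i$), while the second reduces to multiplicative independence of the $\gamma_s^{\ell}$ together with the fact that the combining step has already identified any redundant monomials in $U_a$. Neither of these depends on $\ell$ beyond the requirement $M\mid \ell$, which is used only to keep $A+j\ell$ inside the class $a \pmod M$ so that the same polynomial $U_a$ governs all the values $v_j$.
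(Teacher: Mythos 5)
Your argument is correct and follows essentially the same route as the paper: restrict to the progression $A+j\ell$, observe that the bases $\delta_i^\ell$ are distinct because the combining step leaves pairwise distinct exponent vectors and the $\gamma_s$ are multiplicatively independent, and then kill all the coefficient polynomials by a Vandermonde-type uniqueness statement. The only cosmetic difference is that the paper applies the nonvanishing determinant of Corollary \ref{cor: Vand1} directly to the $k\times k$ system coming from $j=0,\dots,k-1$, whereas you first propagate the zeros to all $j$ via the order-$\le k$ recurrence and then invoke uniqueness of polynomial-exponential representations --- which is exactly the content of that corollary.
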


\begin{proof}  In the notation of section \ref{sec: Period}, let $\delta_i=\gamma_1^{e_{i,1}}\cdots \gamma_r^{e_{i,r}}$
and  $h_i(x)=g_i(A+\ell x)  \zeta_M^{e_{i,0}a} \delta_i^A$ for each $i$, so that $\deg h_i=\deg g_i$. Then the values of $\Delta_i:=\delta_i^\ell =\alpha_i^\ell $ are all distinct, and define
\[
x_j:= \sum_{i=1}^m h_i(x)\Delta_i^j \text{ for all } j\geq 0
\]
so that $x_j=u_{A+j\ell}$ by  \eqref{eq: Polyn}. By hypothesis we have $x_j=0$ for $j=0,\dots,k-1$, which yields a system of $k$ linear equations in the $k$ coefficients of the $h_i$-polynomials.  The corresponding matrix has non-zero determinant by Corollary \ref{cor: Vand1}, and so the $h_i$-coefficients are all $0$, implying the $g_i$-coefficients are all $0$, and therefore $U_a(x,y_1,\dots,y_r)=0$.
\end{proof} 

\begin{corollary} \label{cor: Un=02}  
Let $(u_n)_{n\geq 0}$ be a linear division sequence with   $u_m=0$ for some integer $m\geq 1$.
Then there exist  divisors $d_1,\dots,d_\ell$  of $M$ such that $u_n=0$ if and only if $n$ is divisible by some $d_i$, and 
 $U_a(x,y_1,\dots,y_r)=0$ whenever some $d_i$ divides $a$.
\end{corollary}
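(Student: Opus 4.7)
My plan is to show that the zero set of $(u_n)_{n \geq 0}$ is forced into a very restricted form by combining the division property with Corollary~\ref{cor: Un=0}. Let $Z := \{n \geq 1 : u_n = 0\}$. The division property immediately gives that $u_m = 0$ forces $u_n = 0$ whenever $m \mid n$, so $Z$ is closed upward under divisibility and can be written as $Z = \bigcup_{i=1}^{\ell} d_i \Z_{\geq 1}$, where $D = \{d_1,\dots,d_\ell\}$ is the (finite) antichain of minimal elements of $Z$ under divisibility. The heart of the theorem is to establish that each $d_i$ divides $M$; once this is done, the remaining assertions will fall out easily.

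To prove $d \mid M$ for each $d \in D$, I will set $d^* := \gcd(d, M)$ and aim to derive a contradiction with the minimality of $d$ in $D$ whenever $d^* < d$. Since $u_d = 0$, the division property yields $u_{jd} = 0$ for every $j \geq 1$. The residues $jd \pmod M$ cycle through exactly the multiples of $d^*$ in $\Z/M\Z$, and within each such residue class the zeros are spaced by $\lcm(d, M)$, a multiple of $M$. Applying Corollary~\ref{cor: Un=0} to any $k$ consecutive zeros in a given residue class $b$ then yields $U_b \equiv 0$. Choosing $b = d^*$ in particular gives $u_{d^*} = 0$, so $d^* \in Z$ and some $d_j \in D$ divides $d^*$. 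Then $d_j \mid d^* \mid d$, and the fact that $D$ is an antichain forces $d_j = d$, which is impossible when $d^* < d$. Hence $d = d^* \mid M$.

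The polynomial assertion is then immediate. If $d_i \mid a$ with $1 \leq a \leq M$, then because $d_i$ divides both $a$ and $M$, every $n \equiv a \pmod M$ is a multiple of $d_i$ and hence lies in $Z$, so $u_n = 0$. One last application of Corollary~\ref{cor: Un=0} (with $\ell = M$ and the first $k$ such zeros) then gives $U_a(x, y_1, \dots, y_r) \equiv 0$.

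The main subtlety I expect to navigate is the reduction from $d$ to $d^* = \gcd(d, M)$. Trying to apply Corollary~\ref{cor: Un=0} directly at the residue of $d$ modulo $M$ only forces $U_a \equiv 0$ for that single residue, and does not obviously contradict minimality when $d \leq M$. Recognizing that the multiples $\{jd \bmod M : j \geq 1\}$ sweep out the whole cyclic subgroup generated by $d^*$ in $\Z/M\Z$, so that $d^*$ itself is forced to lie in $Z$, is the key observation that closes the loop.
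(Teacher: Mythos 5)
Your proposal is correct and follows essentially the same route as the paper: both arguments observe that the multiples of a zero index $d$ sweep out the residue class $\gcd(d,M)\pmod M$ along an arithmetic progression of common difference $\lcm(d,M)$ (a multiple of $M$), apply Corollary \ref{cor: Un=0} to conclude $U_{\gcd(d,M)}=0$ and hence $u_{\gcd(d,M)}=0$, and then deduce the divisor structure and the vanishing of $U_a$ for $d_i\mid a$. Your framing via the antichain of minimal zero indices is a minor repackaging of the paper's direct reduction from $m$ to $(m,M)$, not a different method.
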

  
 \begin{proof}  
Let $d=(m,M)$  so there exists an integer $r$ which is divisible by $m$, with $r\equiv d \pmod M$.
Any integer $n\equiv r \pmod  {[m,M]}$ is divisible by $m$ so that  $u_m=0$ divides $u_n$   and  therefore
 $u_n=0$. Therefore  $U_d(x,y_1,\dots,y_r)=0$ and so $u_d=0$  by Corollary \ref{cor: Un=0}. But then $u_n=0$ whenever $d$ divides $n$, and if $d$ divides $a, 1\leq a\leq M$ then   $u_n=0$ for all $n\equiv a \pmod M$ and so 
 $U_a(x,y_1,\dots,y_r)=0$  by Corollary \ref{cor: Un=0}.
 \end{proof}

 \section{Factorization Theorems}

 Suppose we are given representations  for $u_n,v_n$ and $w_n$ as in \eqref{eq: Polyn}. If
 $w_n=u_nv_n$ for all $n$ in some arithmetic progression of length $k$, with each $n\equiv a \pmod M$ then 
 \eqref{eq: Id} holds, by  Corollary \ref{cor: Un=0}.

In particular if $u_n$ is a linear recurrence sequence for which $u_n$ divides $u_{q n}$ for all $n\geq 0$ for a given prime $q$, then we can write 
$u_{qn}=u_nv_n$ where $(v_n)_{n\geq 0}$ is also a linear recurrence sequence, by the  Hadamard quotient theorem, and therefore
\eqref{eq: Id}  holds with $W_a(x,y_1,\dots,y_r)=U_{qa}(qx,y_1^q,\dots,y_r^q)$ (with $U_{qa}=U_b$ where $b$ is the least positive residue of $qa \pmod M$), so that  
 \begin{equation} \label{eq: Id3}
U_{qa}(qx,y_1^q,\dots,y_r^q)=U_a(x,y_1,\dots,y_r)V_a(x,y_1,\dots,y_r).
 \end{equation}
The polynomial $U_a(x,y_1,\dots,y_r)$ therefore divides the polynomial $U_{qa}(qx,y_1^q,\dots,y_r^q)$. One might guess this can only happen if these polynomials have a particular special shape.
 
 \subsection{Factorizations} \label{sec: FactorBinom}
Our goal is to understand the  factorization of $U_a(x,y_1,\dots,y_r)$ and then $U_a(x,y_1^m,\dots,y_r^m)$ for all $m\geq 1$, given \eqref{eq: Id3}.
We partly develop a theory that culminated in the 1935 paper by MacColl \cite{MacC} (following the papers of 
 Ritt \cite{Rit} in 1927 and Gourin \cite{Gou} in 1930).\footnote{They studied ``exponential polynomials'' by developing a version of Lemma \ref{lem: RGM} for
 $f(x,y_1^{m_1},\dots,y_r^{m_r})$ but here we only need the exponents to be equal.}

To begin with we can divide $U_a(x,y_1,\dots,y_r)$ by the monomial $x^{e_0} y_1^{e_1}\cdots y_r^{e_r}$ (as above), leaving a polynomial  with a constant term in each variable.
Next we have \emph{binomial factors} which are irreducible polynomials of the form
\[
B(x,y_1,\dots,y_r):= x^{a_0} y_1^{a_1}\cdots y_r^{a_r} -c x^{b_0} y_1^{b_1}\cdots y_r^{b_r} 
\]
where  $\min \{ a_i,b_i\}=0$ for each $i$.  If $a_0=b_0=0$ then we can always factor
\[
B(x,y_1^m,\dots,y_r^m) = \prod_{j=0}^{m-1} (y_1^{a_1}\cdots y_r^{a_r} - \zeta_m^j c^{1/m} y_1^{b_1}\cdots y_r^{b_r} )
\]
as a product of binomial factors. If say $a_0>b_0=0$ then we get a factorization if and only if $m$ divides $a_0$, again into binomial factors (and an analogous result if $b_0>a_0=0$). Finally we have  \emph{multinomial factors} which are irreducible polynomials containing at least three monomials. One can show \cite{MacC} that for any irreducible multinomial $f(x,y_1,\dots,y_r)$, each $f(x,y_1^m,\dots,y_r^m)$ factors into irreducible multinomial factors, no binomial factors.

 If $f(x,y_1^m,\dots,y_r^m)=g(x,y_1,\dots,y_r)h(x,y_1,\dots,y_r)$ then we can also write \newline
 $f(x,y_1^{mk},\dots,y_r^{mk})=g(x,y_1^k,\dots,y_r^k)h(x,y_1^k,\dots,y_r^k)$ for all $k\geq 2$, which we call an \emph{old factorization}. A factorization that is not old is a \emph{new factorization}. The key to our results is given by the following:

\begin{lemma} [Ritt-Gourin-MacColl] \label{lem: RGM} If $f(x,y_1,\dots,y_r)\in \mathbb C[x,y_1,\dots,y_r]$ is an irreducible multinomial then
there are only finitely many integers $m$ for which $f(x,y_1^m,\dots,y_r^m)$ has a new factorization.  
\end{lemma}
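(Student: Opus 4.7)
The plan is a Galois descent. Let $G_m:=\mu_m^r$ act on $\mathbb{C}[x,y_1,\dots,y_r]$ by $(\zeta_1,\dots,\zeta_r)\cdot y_i=\zeta_iy_i$; its ring of invariants is exactly $\mathbb{C}[x,y_1^m,\dots,y_r^m]$. Any factorization $f(x,y_1^m,\dots,y_r^m)=\prod_j g_j$ into irreducibles is therefore permuted by $G_m$, and for each factor $g=g_j$ the stabilizer up to scalars $H_g\le G_m$ has index equal to the length of the $G_m$-orbit of $g$.

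First I would establish a bound $C=C(f)$, independent of $m$, on the total number of distinct irreducible factors of $f(x,y_1^m,\dots,y_r^m)$. Since $f$ has some fixed number $N$ of nonzero monomials, so does $f(x,y_1^m,\dots,y_r^m)$ for every $m$; moreover, as recalled in section~\ref{sec: FactorBinom}, because $f$ is an irreducible multinomial, $f(x,y_1^m,\dots,y_r^m)$ has only multinomial factors. The required bound is then a multivariate form of Schinzel's theorem on lacunary polynomials (essentially MacColl's~\cite{MacC} count): the number of irreducible multinomial factors of a polynomial depends only on its number of terms. This is the deep input and the main obstacle of the proof; everything else is elementary lattice arithmetic combined with routine Galois theory.

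Given the bound, for each irreducible factor $g$ I write $H_g=L/m\mathbb{Z}^r$ for a sublattice $m\mathbb{Z}^r\subseteq L\subseteq \mathbb{Z}^r$. The orbit bound forces $[\mathbb{Z}^r:L]\le C$, hence $C!\cdot\mathbb{Z}^r\subseteq L$, and so $H_g$ contains $\mu_d^r$ for the \emph{single} divisor $d:=m/\gcd(C!,m)$ of $m$, independent of the choice of $g$, with $m/d\le C!$. Since $H_g$ acts on $g$ by a scalar, defining a character of $H_g$, its restriction to $\mu_d^r$ has the form $(\zeta_1,\dots,\zeta_r)\mapsto \zeta_1^{e_1}\cdots\zeta_r^{e_r}$; comparing monomial by monomial shows every monomial of $g$ has $y$-exponent vector in the single coset $(e_1,\dots,e_r)+d\mathbb{Z}^r$. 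By the normalization in section~\ref{sec: FactorBinom} we may assume $f$ has nonzero constant term in each $y_i$, so no $y_i$ divides $f(x,y_1^m,\dots,y_r^m)$ and hence none divides $g$; this forces every $e_i=0$ and gives $g(x,y_1,\dots,y_r)=\tilde g(x,y_1^d,\dots,y_r^d)$ for some polynomial $\tilde g$.

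Applying this to every $g_j$ at once, the substitution $z_i=y_i^d$ presents $f(x,z_1^{m/d},\dots,z_r^{m/d})=\prod_j \tilde g_j(x,z_1,\dots,z_r)$ as a factorization at the smaller level $m/d\le C!$, of which our original factorization at level $m$ is the pullback along $y_i\mapsto y_i^d$. By definition this is an old factorization whenever $m>C!$, so only the finitely many $m\le C!$ can admit a new factorization; each such $f(x,y_1^m,\dots,y_r^m)$ has only finitely many factorizations in the unique factorization domain $\mathbb{C}[x,y_1,\dots,y_r]$, completing the proof.
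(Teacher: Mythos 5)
The paper offers no proof of Lemma \ref{lem: RGM} --- it is quoted from \cite{Rit, Gou, MacC} --- so your attempt is being compared against a citation rather than an argument. That said, your Kummer-descent scaffolding is correct and is a clean way to organize this material: the identification of the invariant ring of $\mu_m^r$, the induced permutation of the irreducible factors, the step from an index bound to $C!\cdot\mathbb{Z}^r\subseteq L$ and hence to a single $d=m/\gcd(C!,m)$ valid for all factors, the character computation placing every monomial of $g$ in one coset of $d\mathbb{Z}^r$, the observation that $y_i\nmid f$ (automatic for an irreducible multinomial) kills the character, and the conclusion that for $m>C!$ every factorization at level $m$ is pulled back from level $m/d\le C!$ and is therefore old in the paper's sense. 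All of that is sound.

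The genuine gap is the ``deep input.'' The principle you invoke --- that the number of irreducible multinomial factors of a polynomial is bounded in terms of its number of terms --- is false as stated: $y^n-1$ (or $(y+2)(y^n-1)=y^{n+1}+2y^n-y-2$ if you insist the polynomial itself be a multinomial) has a bounded number of terms, yet for $n$ a product of many distinct odd primes it has arbitrarily many irreducible multinomial factors $\Phi_p(y)=1+y+\cdots+y^{p-1}$. The statement you actually need --- a bound, uniform in $m$, on the number of irreducible factors of $f(x,y_1^m,\dots,y_r^m)$ for $f$ an irreducible multinomial --- cannot be treated as an off-the-shelf black box separate from the lemma: your own descent shows it implies Lemma \ref{lem: RGM}, and the converse implication is equally short (for general $m$ the factorization is built up from new factorizations at levels dividing $\lcm$ of the finitely many bad $m$), so the two are essentially \emph{equivalent}. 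Proving that uniform bound is precisely the content of Gourin's and MacColl's theorems; the standard route is a lower bound on the degree of each irreducible factor of $f(y^m)$ that grows linearly in $m$, with a constant depending only on the number of terms of $f$, and nothing in your sketch supplies it. So what you have is a correct and illuminating reduction of the lemma to its combinatorial core, not a proof; to close it you must either prove that degree/counting estimate or cite \cite{Gou, MacC} for it explicitly --- which leaves you in the same position as the paper.
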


\begin{corollary}\label{cor: RGM}
Suppose that each irreducible factor of $f(x,y_1,\dots,y_r)\in \mathbb C[x,y_1,\dots,y_r]$ is a  multinomial factor.
There exists an integer $N=N_f$ such that if $g(x,y_1,\dots,y_r)$ is an irreducible factor of $f(x,y_1^N,\dots,y_r^N)$ then
  $g(x,y_1^k,\dots,y_r^k)$ is irreducible for every integer $k\geq 1$.
\end{corollary}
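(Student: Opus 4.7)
My plan is to apply Lemma~\ref{lem: RGM} iteratively to $f$ and to its factors, exploiting the finiteness of a canonical set of ``strictly primitive'' factors in order to ensure termination.

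First I reduce to the case where $f$ is itself an irreducible multinomial: if $f=\prod_i f_i$ with each $f_i$ irreducible multinomial, I apply the result to each $f_i$ separately and then take $N=\mathrm{lcm}(N_{f_i})$. Assuming henceforth that $f$ is irreducible multinomial, Lemma~\ref{lem: RGM} supplies a finite set $S_f$ of integers $m$ at which $f(x,y_1^m,\ldots,y_r^m)$ acquires a new factorization, and by the MacColl consequence cited after Lemma~\ref{lem: RGM}, every irreducible factor that arises is again multinomial.

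The structural engine of the proof is the observation that the set $\mathcal{P}$ of \emph{strictly primitive} irreducible multinomial factors arising in $f(x,y_1^m,\ldots,y_r^m)$ over all $m\geq 1$—meaning factors not of the form $h(x,y_1^e,\ldots,y_r^e)$ for any $e>1$—is finite. Indeed, if a strictly primitive $g$ is a factor of $f(x,y_1^m,\ldots,y_r^m)$, the 2-factor factorization $f(x,y_1^m,\ldots,y_r^m)=g\cdot(\text{cofactor})$ cannot be old (since $g$ is not a polynomial in any $y_i^e$ with $e>1$), so it is new, forcing $m\in S_f\cup\{1\}$. Since $S_f$ is finite and each level produces only finitely many irreducible factors, $\mathcal{P}$ is finite. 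Moreover, $\mathcal{P}$ is closed under the operation of taking the strictly primitive form of an irreducible factor of $g(x,y_1^M,\ldots,y_r^M)$ for $g\in\mathcal{P}$, since such factors are also factors of $f$ at some level.

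I then iterate: set $N_0=\mathrm{lcm}(S_f)$, factor $f(x,y_1^{N_0},\ldots,y_r^{N_0})$ into irreducible multinomials, apply Lemma~\ref{lem: RGM} to each factor, and define $N_{j+1}$ as $N_j$ times the lcm of all new $S$-sets obtained. Because every factor appearing at any stage has its strictly primitive form in the finite set $\mathcal{P}$, and because the relevant levels $S_g$ for $g\in\mathcal{P}$ form a finite union, the iteration must stabilize after finitely many steps at some $N=N_J$. At this stable $N$, any irreducible factor $g$ of $f(x,y_1^N,\ldots,y_r^N)$ decomposes as $g=g_0(x,y_1^L,\ldots,y_r^L)$ with $g_0\in\mathcal{P}$ and $L\mid N$, and stability of the iteration forces $g_0(x,y_1^{Lk},\ldots,y_r^{Lk})$ to remain irreducible for every $k\geq 1$; hence $g(x,y_1^k,\ldots,y_r^k)$ is irreducible as required.

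The main obstacle will be making the termination argument rigorous: one must verify that once $N$ absorbs the new-factorization levels of every $g_0\in\mathcal{P}$ that actually contributes to the factorization, no unseen strictly primitive factor can materialize at any higher level, and any apparent factorization of $g(x,y_1^k,\ldots,y_r^k)$ would pull back through $\mathcal{P}$ to a refinement already accounted for at level $N$, contradicting the irreducibility of $g$. This bookkeeping—tracking how factorizations at higher levels reduce to pullbacks of the fixed finite family $\mathcal{P}$—is where the finiteness of $\mathcal{P}$ does its essential work.
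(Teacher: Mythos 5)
Your high-level strategy (use Lemma~\ref{lem: RGM} to confine all genuinely new behaviour to finitely many levels and then stabilize) is sound, and your finiteness argument for $\mathcal P$ is correct: a strictly primitive irreducible factor of $f(x,y_1^m,\dots,y_r^m)$ cannot arise from an old factorization, so its presence forces $m\in\mathcal M(f)\cup\{1\}$. But the proof has a genuine gap exactly where you flag it, and finiteness of $\mathcal P$ does not by itself close it. Three steps are asserted without justification. First, the closure property of $\mathcal P$: if $h$ is an irreducible factor of $f(x,y_1^L,\dots,y_r^L)$ and $h=h_0(x,y_1^e,\dots,y_r^e)$ with $h_0$ strictly primitive, it is not clear that $h_0$ divides $f$ at \emph{any} level --- you would need $e\mid L$ (or something like it) to descend, and that is not automatic from the definitions. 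Second, termination: the factors appearing at stage $j$ of your iteration are of the form $h_0(x,y_1^e,\dots,y_r^e)$ with $h_0\in\mathcal P$ but with $e$ varying, and the new-factorization sets $\mathcal M(h_0(x,y_1^e,\dots,y_r^e))$ depend on $e$, not only on $h_0$; a finite $\mathcal P$ alone does not prevent these sets from feeding new prime factors into $N_j$ indefinitely. Third, even granting a stable $N_J$, the passage from ``no new levels remain'' to ``$g(x,y_1^k,\dots,y_r^k)$ is irreducible for every $k$'' is not spelled out.

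All three gaps are filled by a single observation, which is what the paper's (very terse) proof implicitly uses: by repeatedly undoing old factorizations, every nontrivial two-factor factorization of $f(x,y_1^m,\dots,y_r^m)$ is the pullback under $y_i\mapsto y_i^{m/l}$ of a factorization of $f(x,y_1^l,\dots,y_r^l)$ with $l\mid m$ and $l\in\mathcal M(f)\cup\{1\}$ (the descent terminates because the level strictly decreases, and it must end either at a new factorization or at level $1$). In particular every irreducible factor of $f(x,y_1^m,\dots,y_r^m)$ equals $a(x,y_1^{m/l},\dots,y_r^{m/l})$ for some irreducible factor $a$ of $f(x,y_1^l,\dots,y_r^l)$ with $l\mid\gcd(m,N)$, where $N=\lcm[m:m\in\mathcal M(f)]$. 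Applying this at level $Nk$ shows every irreducible factor of $f(x,y_1^{Nk},\dots,y_r^{Nk})$ has the form $A(x,y_1^k,\dots,y_r^k)$ with $A$ an irreducible factor of $f(x,y_1^N,\dots,y_r^N)$; replacing $y_i^k$ by a new variable $z_i$ in the resulting identity then forces each $g(x,y_1^k,\dots,y_r^k)$ to be irreducible. This is exactly the paper's one-line choice of $N$; once you have the descent, the set $\mathcal P$ and the iteration become unnecessary.
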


\begin{proof}
Let $\mathcal M(f)$ be the set of integers $m>1$ for which there is a new factorization of $f(x,y_1^m,\dots,y_r^m)$, so that $\mathcal M(f)$ is finite 
by Lemma \ref{lem: RGM}.  Let $N=\lcm[m: m\in \mathcal M(f)]$.  The result follows from Lemma \ref{lem: RGM}.
\end{proof}

\begin{prop}\label{prop: Key result} Let $q$ be a prime with $q\nmid M$.
Suppose that  $(u_n)_{n\geq 0}$ is a linear recurrence sequence in the integers with period $M$, for which $u_n$ divides $u_{qn}$ for every integer $n\geq 1$.
Then the polynomials $U_a(x,y_1,\dots,y_r)$, as defined above,  are each either $0$ or a monomial times a product of binomials none of which contain the variable $x$.
\end{prop}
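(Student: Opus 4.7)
The plan is to translate $u_n \mid u_{qn}$ into a polynomial identity for $U_a$ and then analyze its irreducible factorization via the Ritt--Gourin--MacColl theory. By the Hadamard quotient theorem, $v_n := u_{qn}/u_n$ is a linear recurrence in $\Z$, and the derivation leading to \eqref{eq: Id3} gives the polynomial identity $U_{qa}(qx, y_1^q,\dots,y_r^q) = U_a(x,y)\,V_a(x,y)$. Since $\gcd(q,M)=1$, the map $a\mapsto qa\bmod M$ has some finite order $d$, and iterating the identity $d$ times (using $U_{q^d a}=U_a$) yields
\[
U_a(q^d x,\, y_1^{q^d},\dots, y_r^{q^d}) \,=\, U_a(x,y)\,W_a(y),
\]
with $W_a$ free of $x$ since both sides have $x$-degree $\deg_x U_a$. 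Hence $U_a(x,y)\mid U_a(q^d x, y^{q^d})$ in $\CC[x,y_1,\dots,y_r]$.

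Each irreducible factor $F$ of $U_a$ must then divide $F'(q^d x, y^{q^d})$ for some irreducible factor $F'$ of $U_a$, since the irreducibles of $U_a(q^d x, y^{q^d})$ come from factoring the $F'(q^d x, y^{q^d})$. This defines a map $\psi: F\mapsto F'$ on the finite set of irreducible factors of $U_a$, and each orbit enters a cycle; a short induction along a cycle of length $k$ shows $F\mid F(Qx, y^Q)$ with $Q:=q^{kd}$, and hence $F\mid F(Q^j x, y^{Q^j})$ for every $j\ge 1$. The main obstacle is to rule out $F$ being multinomial: by MacColl's theorem (cited after Lemma~\ref{lem: RGM}) all irreducible factors of $F(Q^j x, y^{Q^j})$ remain multinomial when $F$ is, so $F$ itself must be one of them; Lemma~\ref{lem: RGM} then says that for $j$ large enough that $Q^j$ lies outside the finite exceptional set $\mathcal M_F$, this factor takes the ``old factorization'' form $h_\ell(Q^j x, y^{Q^j/k_\ell})$, where $h_\ell$ is an irreducible factor of $F(x,y^{k_\ell})$ with $k_\ell\le \max\mathcal M_F$ bounded. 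A multinomial $h_\ell$ cannot be purely in $x$ (an irreducible polynomial in $x$ alone is linear and has at most two terms), so $\deg_y h_\ell\ge 1$; hence $\deg_y F \ge Q^j/\max\mathcal M_F\to\infty$ as $j\to\infty$, contradicting $\deg_y F$ being fixed.

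Finally, I would rule out irreducible binomial factors of $U_a$ that contain $x$. If $F = x^{a_0} y^{\vec a} - c\,y^{\vec b}$ is such a binomial (so $a_0\ge 1$, $\min(\vec a_i,\vec b_i)=0$, $\gcd(a_0,\vec a-\vec b)=1$), then $F\mid F(Qx,y^Q)$ forces $F(Qx,y^Q)$ to vanish on $V(F)$; substituting the defining relation $x^{a_0}y^{\vec a-\vec b}=c$ reduces this to $Q^{a_0}\,y^{(Q-1)(\vec a-\vec b)}=1$ identically on $V(F)$, which fails for $Q\ge 2$ unless $\vec a=\vec b=\vec 0$, and in the residual case $F=x^{a_0}-c$ the divisibility $F\mid F(Qx)$ reduces to $c(Q^{a_0}-1)=0$, again impossible. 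So no such $F$ lies in a $\psi$-cycle; moreover if $F$ contains $x$ then so does $\psi(F)$ (for $F$ divides $\psi(F)(q^d x,y^{q^d})$, whose $x$-degree is $\deg_x\psi(F)$), and unravelling the $\psi$-orbit back from the cycle --- whose only $x$-containing element can be $F=x$ itself --- forces any irreducible factor of $U_a$ that contains $x$ to equal $x$. Combined with the exclusion of multinomials, the irreducible factors of $U_a$ are only the monomials $x,y_1,\dots,y_r$ together with binomials in the $y_i$ alone, so $U_a$ is a monomial times a product of such binomials, as claimed.
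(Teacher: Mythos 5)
Your proof is correct in substance and follows the same overall strategy as the paper: derive the self-divisibility $U_a(x,y)\mid U_a(Qx,y_1^Q,\dots,y_r^Q)$ by iterating \eqref{eq: Id3}, use the Ritt--Gourin--MacColl theory to exclude multinomial factors, and then give a separate argument excluding binomials that contain $x$. The execution differs in two places. For the multinomials, the paper groups all multinomial irreducible factors into a single polynomial $f$, passes to $f(x,y^N)$ via Corollary \ref{cor: RGM}, and compares the \emph{number} of irreducible factors on both sides of $f(x,y^N)\mid f(Qx,y^{QN})$ to force equality of the two polynomials and hence $y$-degree zero; you instead track individual irreducible factors through the map $\psi$ and derive a degree-growth contradiction from the old-factorization structure of Lemma \ref{lem: RGM}. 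For the $x$-binomials, the paper substitutes $y_i\mapsto y_i^N$ to reduce to $x$-degree one and again counts factors, whereas you compute directly on $V(F)$; your computation is clean and arguably more transparent.

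One point needs patching: your exclusion of multinomials is carried out only for factors lying in a $\psi$-cycle, since only those satisfy $F\mid F(Qx,y^Q)$. To conclude that $U_a$ has \emph{no} multinomial factor you must add the observation that $\psi$ sends multinomial factors to multinomial factors: if $\psi(F)$ were a monomial or binomial then $\psi(F)(q^dx,y_1^{q^d},\dots,y_r^{q^d})$ would factor into monomials and binomials only, so its irreducible divisor $F$ could not be a multinomial. Hence any multinomial factor would push a multinomial into some cycle, which you have ruled out. You make exactly this kind of orbit-unravelling argument for the property ``contains $x$'', so the fix is one line; with it the proof is complete.
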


\begin{proof} Let $k$ be the order of $q$ mod $M$ so that   $q^ka\equiv a \pmod M$ for any $a \pmod M$. Therefore iterating \eqref{eq: Id3}  $k$ times, we obtain
\[
U_a(x,y_1,\dots,y_r) \text{ divides } U_{a}(Qx,y_1^Q,\dots,y_r^Q)
\]
where $Q=q^k$.  Let $f(x,y_1,\dots,y_r)$ be the product of the multinomial irreducible factors of 
$U_a(x,y_1,\dots,y_r)$ 
so that  $f(x,y_1,\dots,y_r)$ divides $f(Qx,y_1^Q,\dots,y_r^Q)$. By  Corollary \ref{cor: RGM}  the polynomials
$f(x,y_1^N,\dots,y_r^N)$ and $f(X,y_1^{QN},\dots,y_r^{QN})$ have the same number of irreducible factors. However since the first divides the second (with $X=Qx$), these irreducible factors must be the same, and so these polynomials must be equal, and have the same degree. However this is impossible unless the degree in each $y_i$ is $0$; in other words,  $f(x,y_1,\dots,y_r)$ is a polynomial in $x$ only, which is not a multinomial. Therefore
$U_a$ is a  monomial times a product of binomials.

Every binomial involving $x$ takes the form $x^{a_0} y_1^{a_1}\cdots y_s^{a_s} - c  y_{s+1}^{a_{s+1}}\cdots y_r^{a_r}$. Let $N$ be the lcm of the $a_0$ from all such factors. Substituting in $y_i^N$ for $y_i$ and factoring leaves only binomial factors of the form
$x  y_1^{b_1}\cdots y_s^{b_s} - C  y_{s+1}^{b_{s+1}}\cdots y_r^{b_r}$, which are all irreducible.  
Let $h(x,y_1,\dots,y_r)$ be the product of these binomials. Then $h(x,y_1,\dots,y_r)$ divides $h(Qx,y_1^Q,\dots,y_r^Q)$ but again both polynomials have the same number of irreducible factors so must be equal, that is of the same degree, again implying that  the degree in each $y_i$ is $0$. Therefore each $h$ is a  polynomial in $x$ only such that $h(x)$ divides $h(Qx)$. But if $\rho$ is the root of largest absolute value of $h(x)$ then $h(Q\rho)=0$ and so $\rho=0$, that is $h(x)$ must be a constant times a power of $x$, so must equal $1$ as this is not a binomial.
\end{proof}

We now present a different way to reach the same conclusion, which will help in the proof of Theorem \ref{thm: main4}.

\medskip

\noindent \textbf{Proposition 3 revisited} \emph{Suppose that  $(u_n)_{n\geq 0}$ is a linear recurrence sequence in the integers with period $M$, for which $u_n$ divides $u_{jn}$ for $1\leq j<k$.
Then the polynomials $U_a(x,y_1,\dots,y_r)$, as defined above,  are each either $0$ or a monomial times a product of binomials none of which contain the variable $x$.}
\medskip

   \begin{proof} Proposition \ref{pr: coolio} implies the result of B\'ezivin,   Peth\H{o} and  van der Poorten \cite{vP}:
 \[
 u_n \text{ divides } \prod_{j=1}^m   (n\alpha_j^{n-1}) ^{\binom{k_j}2}\cdot      \prod_{1\leq i<j\leq m} \bigg( \frac{\alpha_i^n-\alpha_j^n}{\alpha_i-\alpha_j} \bigg)^{k_ik_j}.
 \]
The right-hand side is also a linear recurrence sequence and so, in the discussion and notation of section \ref{sec: Period},  
\[
U_a(x,y_1,\dots,y_r) \text{ divides }  x^{e_0} y_1^{e_1}\cdots y_r^{e_r}
 \prod_{1\leq i<j\leq m}  ( \zeta_M^{e_{i,0}a} y_1^{e_{i,1}}\cdots y_r^{e_{i,r}} - \zeta_M^{e_{j,0}a} y_1^{e_{j,1}}\cdots y_r^{e_{j,r}})^{k_ik_j}.
\]
The right-hand side is a product of monomials and binomials  and only factors into 
 monomials and binomials as we saw there, and therefore $U_a(x,y_1,\dots,y_r)$ factors into 
 monomials and binomials.
\end{proof}

In the original Proposition \ref{prop: Key result} the binomials in $U_a$ are shown to be of the form $\gamma^n-1$ where $\gamma$ can be expressed as a product of $\alpha_i$'s to integer powers so that each $\gamma\in K$. In the modified 
Proposition \ref{prop: Key result} the binomials in $U_a$ come from   binomials that divide some $\alpha_i^n-\alpha_j^n$; that is,   there exist some $i\ne j$ and integer $r$ for which $\alpha_i=\gamma^r\alpha_j$.

 \subsection{The shape of a linear division sequence}

Let $K=\mathbb Q(\alpha_1,\dots,\alpha_m)$, let $A$ be the ring of integers of $K$.

\begin{corollary}\label{cor: Key1}
If  $(u_n)_{n\geq 0}$ is a linear division sequence in the integers with associated period $M\geq 1$ then  
\begin{equation} \label{eq: un-value1}
u_n=  \kappa_a n^{\epsilon_a} \lambda_a^{n-1} \prod_{j=1}^\ell v_{j,n}(\nu_j,\delta_j)\text{ whenever } n\equiv a \pmod M \text{ for } 1\leq a\leq M.
\end{equation}
where the $\kappa_a\in K$ and the $\epsilon_a\in \mathbb Z^+$,
\begin{itemize}
\item The $ \lambda_a\in K$ are products of the $\alpha_i$'s to integer powers;
\item The $\nu_j, \delta_j\in A$  are  products of the $\alpha_i$'s to positive integer powers;
\item  The $\nu_j/\delta_j$ are pairwise multiplicatively independent;
\item The $(v_{j,n}(x,y))_{n\geq 0}$ are multiplicatively-dependent linear division sequences in $A[x,y]^{\text{homogenous}}$,   monic in $x$, with  $v_{j,1}(t)=1$.
\end{itemize}
\end{corollary}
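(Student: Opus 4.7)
The plan is to translate the factorization data from Proposition \ref{prop: Key result} into the multiplicative form claimed, and then to regroup irreducible binomial factors according to multiplicative dependence of characteristic ratios. Since $(u_n)_{n\geq 0}$ is a LDS, $u_n$ divides $u_{qn}$ for every prime $q$, so in particular for any $q\nmid M$ the hypothesis of Proposition \ref{prop: Key result} holds, and each $U_a(x,y_1,\ldots,y_r)$ is either $0$ or factors as a monomial times a product of irreducible binomials involving only the $y_i$'s.

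Splitting off the monomial part $c_a\, x^{e_0^{(a)}} y_1^{e_1^{(a)}}\cdots y_r^{e_r^{(a)}}$ and evaluating at $x=n$, $y_i=\gamma_i^n$ gives the factor $\kappa_a\, n^{\epsilon_a}\lambda_a^{n-1}$ after setting $\epsilon_a=e_0^{(a)}$, $\lambda_a=\prod_i \gamma_i^{e_i^{(a)}}\in K$, and absorbing the roots of unity from \eqref{eq: Polyn} and one copy of $\lambda_a$ into $\kappa_a$. Each remaining irreducible binomial has the form $y_1^{a_1}\cdots y_r^{a_r} - c\, y_1^{b_1}\cdots y_r^{b_r}$ with $\min(a_i,b_i)=0$, evaluating to $\nu^n - c\delta^n$ where $\nu,\delta$ are integral products of the $\alpha_i$'s lying in $A$. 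I then partition these binomials into equivalence classes via the relation: two binomials are equivalent iff their ratios $\nu/\delta$ are multiplicatively dependent; within each class $j$, I choose $\nu_j,\delta_j\in A$ so that every other ratio in the class is a rational power of $\nu_j/\delta_j$. The product over class $j$, after homogenization, is a multiplicatively-dependent LDS $(v_{j,n}(x,y))_{n\geq 0}$ of the form classified by Theorem \ref{thm: genF(t)}.

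The main obstacle is verifying that each grouped factor $(v_{j,n}(\nu_j,\delta_j))_{n\geq 0}$ is itself a division sequence, rather than merely that the full product $\prod_j v_{j,n}(\nu_j,\delta_j)$ is. The key point is multiplicative independence of ratios across distinct classes: the characteristic roots associated with $v_{j,n}$ and $v_{k,n}$ share no multiplicative relation whenever $j\neq k$, so by the Hadamard quotient theorem applied to the quotient $u_n/u_m$ (for $m\mid n$), together with Corollary \ref{cor: Un=0} to enforce the resulting polynomial identity, the division must hold class-by-class, making each $v_{j,n}$ a LDS in $A[x,y]^{\text{homogenous}}$. The monic-in-$x$ condition and the normalization $v_{j,1}=1$ are then routine: any leading coefficient or constant discrepancy can be absorbed into $\kappa_a$, which completes the decomposition.
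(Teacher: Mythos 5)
Your proposal is correct and follows essentially the same route as the paper: apply Proposition \ref{prop: Key result} to write each $U_a$ as a monomial times binomials, group the binomials by multiplicative dependence of their ratios into powers of pairwise multiplicatively independent generators, and then use the polynomial divisibility coming from the Hadamard quotient theorem and Corollary \ref{cor: Un=0} together with that independence to see the division holds class-by-class, before homogenizing and normalizing. The only slight imprecision is that each ratio in a class should be an \emph{integer} (not merely rational) power of $\nu_j/\delta_j$, which the paper secures by observing the exponent vectors are integer multiples of a primitive integer vector.
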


It could be that  $\kappa_a=0$ for some $a$. If so then  there exist  divisors $d_1,\dots,d_\ell$  of $M$ such that 
$\kappa_a=0$  if and only if $a$ is divisible by some $d_i$,  by Corollary \ref{cor: Un=02}.

 
 \begin{proof}
 Proposition \ref{prop: Key result} implies that we can write 
 \[
 U_a(x,y_1,\dots,y_r) = \kappa_a'  x^{e_{a,0}} y_1^{e_{a,1}}\cdots y_r^{e_{a,r}}  \prod_{i=1}^I B_i(y_1,\dots,y_r)
 \]
 where $ B_i(y_1,\dots,y_r):= z_i-c_i$ with $c_i\in \mathbb C^*$ and each $z_i=y_1^{z_{i,1}}\cdots y_r^{z_{i,r}}$ for some $z_{i,j}\in \mathbb Z$.
 Now if $B_i(z_i)$ divides $B_j(z_j^k)$ then $z_i$ and $z_j$ must be multiplicatively dependent, and so 
since the $y_j$ are multiplicatively independent,  
the exponent vectors $(z_{i,1},\dots ,z_{i,r})$ and $(z_{j,1},\dots ,z_{j,r})$ must be linearly dependent.  Since these are integer vectors they must each be an integer multiple of some integer vector. Therefore there exist $w_1,\dots,w_\ell$ which are pairwise multiplicatively independent such that each $z_i$ equals  $w_j^{b_j}$ for some $j$ and for some integer $b_j$ (and so each $w_j=y_1^{w_{i,1}}\cdots y_r^{w_{i,r}}$ where each $w_{i,j}\in \mathbb Z$). We can then write each
\[
U_a(x,y_1,\dots,y_r) = \kappa_a' x^{e_{a,0}} y_1^{e_{a,1}}\cdots y_r^{e_{a,r}} \prod_{j=1}^\ell P_{a,j}(w_j)
\]
where $P_{a,j}$ is a monic polynomial (as it is a product of factors of the form $w_j^{b_j}-c_i$).

Let $\eta_j=w_j(\gamma_1,\dots,\gamma_r)= \gamma_1^{w_{i,1}}\cdots \gamma_r^{w_{i,r}}$ so that for  $\epsilon_a=e_{a,0}$
\[
u_n=U_a(n,\gamma_1^n,\dots,\gamma_r^n) = 
\kappa_a' n^{\epsilon_a} (\gamma_1^{e_{a,1}}\cdots \gamma_r^{e_{a,r}})^n \prod_{j=1}^\ell P_{a,j}(\eta_j^n).
\]

 $P_{a,i}(w_i)$ divides $x^{he_{a,0}} y_1^{he_{a,1}}\cdots y_r^{he_{a,r}} \prod_{j=1}^\ell P_{ha,j}(w_j^h)$ for any $h\geq 1$ and each $a\in \{ 1,\dots,M\}$. 
Writing these   polynomials as products of binomial factors, we see that each $w_i-c$ can only divide $P_{ha,i}(w_i^h)$ and so $P_{a,i}(w_i)$ divides $P_{ha,i}(w_i^h)$. Hence  if we define
$v_{j,n}(t):=P_{a,j}(t^n)/P_{1,j}(t)$ whenever $n\equiv a \pmod M$ then    $(v_{j,n}(t))_{n\geq 0}$ is a multiplicatively dependent linear division sequence in $\mathbb C[t]$
with $v_{j,1}(t)=1$.
Therefore
\[
u_n= 
\kappa_a'' n^{\epsilon_a}(\gamma_1^{e_{a,1}}\cdots \gamma_r^{e_{a,r}})^n \prod_{j=1}^\ell v_{j,n}(\eta_j) \text{ whenever } n\equiv a \pmod M \text{ for } 1\leq a\leq M,
\]
with $\kappa_a''=\kappa_a' \prod_{j=1}^\ell P_{1,j}(\eta_j)$.

The expression for each $\eta_j$ is a product of $\gamma_i$'s to integer powers, which equals a product of $\alpha_i$'s to integer powers.
Therefore we can write $\eta_j=\nu_j/\delta_j$ where $\nu_j$ and $\delta_j$  are each the product of $\alpha_i$'s to non-negative integer powers, and with no 
$\alpha_i$ in common.  If $P_{a,j}$ has degree $d$ ($=d_{a,j}$) then homogenize so that $P_{a,j}(x,y)=y^dP_{a,j}(x/y)$ and therefore
$ P_{a,j}(\eta_j^n) = \delta_j^{-dn} P_{a,j}(\nu_j^n,\delta_j^n)$. Writing $v_{j,n}(x,y)=y^{dn}v_{j,n}(x/y),\ \kappa_a=\kappa_a''\lambda_a$ and 
 $\lambda_a:=\gamma_1^{e_{a,1}}\cdots \gamma_r^{e_{a,r}} \prod_j \delta_j^{-d_{a,j}}$ we obtain the claimed result.
\end{proof}
 
This same proof, with suitable modifications, works more easily for LDS's in $\mathbb C[t]$.
To ``complete'' Theorem \ref{thm: genF(t)} the only part of Corollary \ref{cor: Key1} that we need is that 
any LDS is a power LDS times a simple LDS, and so we deduce the  full classification of LDS's in $\mathbb C[t]$:
 
 \begin{corollary} \label{cor: genF(t)}  
 $(u_n(t))_{n\geq 0}$   is a LDS in $\mathbb C[t]$  for which all of the roots of the characteristic polynomial are multiplicatively dependent on $t$  
 if and only if it is the product of  a power LDS and a periodic LDS,   an exponential LDS  (in powers of $t$) and some LDS $(v_n(t))_{n\geq 0}$ as in \eqref{eq: vnform}, each in $\mathbb C[t]$.
 \end{corollary}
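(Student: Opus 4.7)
The plan is to bootstrap directly from the two main ingredients already in hand: the classification of simple LDS's over $\mathbb C[t]$ in Theorem \ref{thm: genF(t)}, and the general ``shape'' theorem (Corollary \ref{cor: Key1}) which expresses an arbitrary LDS as a power-LDS factor times a product of simple, multiplicatively-dependent LDS's. For the converse direction there is nothing to do: each of the four types listed (power LDS, periodic LDS, exponential LDS in powers of $t$, and an LDS of the form \eqref{eq: vnform}) is already known to be a LDS in $\mathbb C[t]$, and products of LDS's are LDS's, so the substantive content is the forward direction.

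First I would verify that the proof of Corollary \ref{cor: Key1} transfers without change from $\mathbb Z$ to $\mathbb C[t]$. This is indicated in the remark just before the present corollary; indeed, the proof of Corollary \ref{cor: Key1} relied only on Proposition \ref{prop: Key result} (whose input is the Ritt--Gourin--MacColl Lemma \ref{lem: RGM} and the Hadamard quotient theorem), both of which apply verbatim in the polynomial setting. The conclusion is that any LDS $(u_n(t))_{n\geq 0}$ in $\mathbb C[t]$ whose characteristic roots are multiplicatively dependent on $t$ factors as a power LDS $n^{\epsilon_a}$ (for $n\equiv a \pmod M$) times an exponential factor in powers of $t$, times a product of multiplicatively-dependent simple LDS's in $\mathbb C[t]$.

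Second, I would apply Theorem \ref{thm: genF(t)} to the simple LDS component produced in the previous step. Theorem \ref{thm: genF(t)} classifies precisely the simple LDS's in $\mathbb C[t]$ of that type, writing each as a periodic LDS times an exponential LDS (in powers of $t$) times an LDS of the form \eqref{eq: vnform}. Substituting this decomposition back and absorbing the exponential and periodic parts coming from different factors into single overall exponential and periodic factors gives the required product decomposition. The only small technicality is to check that, after multiplying together several LDS's of the form \eqref{eq: vnform}, the result is still of that form; this is immediate because the lcm structure in \eqref{eq: vnform} is closed under enlarging the set of triples $(d,m,j)$ with nonzero exponents, so a product of such LDS's is again such an LDS once one allows the $h_{d,m,j}$ to record sums of exponents.

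The main (mild) obstacle I anticipate is bookkeeping of periods: different simple factors produced by Corollary \ref{cor: Key1} may a priori live at different periods, and Theorem \ref{thm: genF(t)} is stated for a fixed period $M$. I would resolve this by replacing all periods by their common lcm and re-expressing each factor with this larger period (which only enlarges the index set $\{(d,m,j)\}$ in \eqref{eq: vnform} and does not change the character of any of the power, periodic, or exponential factors). Once the periods are aligned the two classifications combine cleanly, and the equivalence claimed in the corollary follows.
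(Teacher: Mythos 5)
Your proposal is correct and follows essentially the same route as the paper: the paper's own (very terse) argument likewise observes that the machinery behind Corollary \ref{cor: Key1} transfers to $\mathbb C[t]$, uses it only to split off the power-LDS factor and reduce to a simple LDS, and then invokes Theorem \ref{thm: genF(t)} for the simple part. Your extra bookkeeping (aligning periods via their lcm and checking that products of LDS's of the form \eqref{eq: vnform} remain of that form, which holds because the cyclotomic multiplicities stay monotone along divisibility) just fills in details the paper leaves implicit.
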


\section{Proof of Theorem \ref{thm: main}}
  
The left-hand side of  \eqref{eq: un-value1}, $u_n$, is an integer whereas the 
 right-hand side is a product of numbers in $K$, so we need to better understand this equality, and  by studying each of the terms  on the right-have side of 
 \eqref{eq: un-value1}, we will  deduce Theorem  \ref{thm: main}.

 \subsection{Proof of Theorem \ref{thm: main}: I. Polynomially generated LDS's}

\begin{proof} [Proof that  $\prod_{j=1}^\ell v_{j,n}(\nu_j,\delta_j)$ is a product of polynomially generated LDS's in $\mathbb Z$] 
Let  $G:=\text{Gal}(K/\mathbb Q)$.  The ideals $(\nu_j,\delta_j)$ are not necessarily   principal   so we cannot just divide them out from 
 $\nu_j$ and $\delta_j$ like we do in the integer case. However if $I$ is a minimal representative of the ideal class of 
 $(\nu_j,\delta_j)$ in the  class group of $K/\mathbb Q$, and $I^c$ is a (minimal) representative of the inverse
  so that $I\cdot I^c=(g)$ and $(\nu_j,\delta_j)\cdot I^c=(h)$  for some $g,h\in A$, then
 $(\frac gh\cdot \nu_j,\frac gh\cdot \delta_j) =I$. We therefore adjust the $\nu_j,\delta_j$ in  \eqref{eq: un-value1} accordingly, which will also lead to us adjusting each $\lambda_a$. We will need that if $I$ is the representative of the ideal class of  $(\nu_j,\delta_j)$ then
 $I^\sigma$ is the representative of the ideal class of  $(\nu_j^\sigma,\delta_j^\sigma)$ for each $\sigma\in G$.

 Let $L=\mathbb Q(\nu_i,\delta_i)$ and then $H$ be a set of representatives of 
 \[
 \text{Aut}(L)/\text{Stab}(\{ v_{i,n}(\nu_i,\delta_i): n\ge 1\}) .
 \]
 For each $\sigma\in H$ we see that 
 \[
\sigma(v_{i,n}(\nu_i,\delta_i)) \text{ divides }    \sigma(u_n)= u_n=  \kappa_a n^{\epsilon_a} \lambda_a^n \prod_{j=1}^\ell v_{j,n}(\nu_j,\delta_j),
 \]
 Now $\sigma(v_{i,n}(\nu_i,\delta_i))$ is the homogenous polynomial $\sigma(v_{i,n}(x,y))$ evaluated at $x=\nu_i^\sigma,y=\delta_i^\sigma$ and so
 $\nu_i^\sigma/\delta_i^\sigma$ must equal some $\nu_j/\delta_j$, and by our choices of gcd in the previous paragraph, we see that 
 $(\nu_i,\delta_i)^\sigma=(\nu_i^\sigma,   \delta_i^\sigma)$. Therefore
 $\sigma(v_{n}(\nu_i,\delta_i))$ divides  $v_{j,n}(\nu_j,\delta_j) = \sigma(v_{i,n}) (\nu_i^\sigma,   \delta_i^\sigma)$.  Therefore the product on the right-hand side of
 \eqref{eq: un-value1} can be partitioned into subproducts of the form
 \[
 \prod_{\sigma\in H}   \sigma(v_{i,n}) (\nu^\sigma,   \delta^\sigma)  .
 \]
 (We remark that each $\sigma(v_{i,1})(x,y)=1$ by Corollary \ref{cor: Key1}.)
 We chose $H$ to be a minimal set of automorphisms for which this product is stable under the action of $G$; moreover, it is a product of elements of $A$ and stable under Galois so it is an integer. 
 
Now  each $(\nu^\sigma,   \delta^\sigma)=I^\sigma$ so we can certainly divide through by $q^{\deg v_{i,n}}$ where the positive integer $q$ is obtained from
the integer ideal $(q)=\prod_{\sigma\in H} I^\sigma$, adjusting the value of $\lambda_a$ accordingly. We therefore obtain  a polynomially generated LDS in $\mathbb Z$.
\end{proof}

\subsection{Sorting out some details}

We can now rewrite Corollary \ref{cor: Key1} as
\begin{equation} \label{eq: un-value2a}
u_n=  \kappa_a n^{\epsilon_a} \lambda_a^{n-1}  \prod_{j=1}^J u_{j,n} \text{ whenever } n\equiv a \pmod M \text{ for } 1\leq a\leq M,
\end{equation}
where each $(u_{j,n})_{n\geq 0}$ is a polynomially generated LDS in $\mathbb Z$, the $\epsilon_a\in \mathbb Z^+$ and
 the $\kappa_a, \lambda_a\in K$ (though not with the same values as in Corollary \ref{cor: Key1}).
 
Since each $u_n, u_{j,n}\in \mathbb Z$ we deduce that each $ \kappa_a  \lambda_a^{n-1}\in  \mathbb Q$.
Therefore $\tau_a:=\lambda_a^M=( \kappa_a  \lambda_a^{a+M-1})/ (\kappa_a  \lambda_a^{a-1})\in  \mathbb Q$, and so taking
$\kappa_a'=\kappa_a \lambda_a^{a-1}\in  \mathbb Q$ we deduce that 
 \begin{equation} \label{eq: un-value2a}
u_n=  \kappa_a' n^{\epsilon_a} \tau_a^{k}  \prod_{j=1}^J u_{j,n} \text{ writing } n=a+kM \text{ for } 1\leq a\leq M, k\geq 0,
\end{equation}
 with the $\kappa_a', \tau_a \in  \mathbb Q$.
 
 Next we need the following lemma which we will prove in  section \ref{sec: P-divisibilty}:
 
 \begin{lemma} \label{PolyGenDivByp} 
  Let   $(u_{n})_{n\geq 0}$ be a polynomially generated LDS in $\mathbb Z$.
 \begin{enumerate} [(a)]
\item  For any prime $p$ there exist constants $a_p,b_p$ such that 
$v_p(u_n) \leq a_p+b_pv_p(n)$. In particular $v_p(u_n) \ll_p \log 2n$.
\item There exists a constant $C=C((u_{n})_{n\geq 0})$ such that if $p$ divides $n$ then 
$p^{v_p(u_n)}\ll (pe^{n/p})^{C}$.
\item There exists a finite set of integers $n_1,\dots,n_k>1$ such that if $p|u_n$ then some $n_i$ divides $n$.
Moreover if $p|u_{np}$ then $p$ divides both $u_n$ and $u_{np}/u_n$.
\end{enumerate}
\end{lemma}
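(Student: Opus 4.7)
The plan is to exploit the explicit factored form of a polynomially generated LDS inherited from Corollary \ref{cor: Key1} and Theorem \ref{thm: genF(t)}: up to a fixed rational prefactor, $u_n$ decomposes as a product of Galois conjugates of quantities $(\eta^n-\zeta)^*/(\eta^d-\zeta)^*$, where $\eta\in A$ is a monomial in the characteristic roots $\alpha_i$, $\zeta$ is an $M$-th root of unity, and $d=(n,M)$. Working one prime ideal $\mathfrak p$ at a time in the ring of integers $A$ of $K=\mathbb Q(\alpha_1,\dots,\alpha_m)$ turns each such factor into the $\mathfrak p$-adic valuation of a Lucas- or cyclotomic-type expression, which I can then attack with the classical rank-of-apparition and lifting-the-exponent (LTE) machinery.

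For part (a), I fix a prime $p$ with prime ideals $\mathfrak{p}_1,\dots,\mathfrak{p}_s$ above it and study each binomial factor $\eta^n-\zeta$ at each $\mathfrak{p}_i$ separately. Either $\mathfrak{p}_i\mid\eta$, in which case $v_{\mathfrak{p}_i}(\eta^n-\zeta)$ is bounded by a constant depending only on the LDS and $p$, or $\zeta^{-1}\eta$ has a finite order $r_{\mathfrak{p}_i}$ modulo $\mathfrak{p}_i$ and LTE gives
\[
v_{\mathfrak{p}_i}(\eta^n-\zeta)=v_{\mathfrak{p}_i}(\eta^{r_{\mathfrak{p}_i}}-\zeta)+v_{\mathfrak{p}_i}(n/r_{\mathfrak{p}_i})
\]
whenever $r_{\mathfrak{p}_i}\mid n$ (with the customary correction when $\mathfrak{p}_i$ is dyadic). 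Converting $v_{\mathfrak{p}_i}$ to $v_p$ via ramification indices and summing over the finitely many primes above $p$ and the finitely many binomial factors yields $v_p(u_n)\leq a_p+b_p v_p(n)$; the consequence $v_p(u_n)\ll_p\log 2n$ is immediate from $v_p(n)\leq \log n/\log p$.

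For part (b), I upgrade the $p$-dependent $a_p$ to a uniform bound once $p\mid n$. The dominant contribution to $a_p$ is the base term $v_\mathfrak{p}(\eta^{r_\mathfrak{p}}-\zeta)$, which satisfies $v_\mathfrak{p}(\eta^{r_\mathfrak{p}}-\zeta)\log p\ll r_\mathfrak{p}$ by the archimedean size of $\eta$. Since $p\mid u_n$ forces $r_\mathfrak{p}\mid n$ and hence $r_\mathfrak{p}\leq n$, a case split on whether $r_\mathfrak{p}\leq p$ or $p\mid r_\mathfrak{p}$, combined with $v_p(n)\log p\leq\log n$, will yield $v_p(u_n)\log p\ll \log p+n/p$, which is equivalent to the stated $p^{v_p(u_n)}\ll(pe^{n/p})^C$. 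For part (c), I plan to take the finite set $\{n_1,\dots,n_k\}$ to consist of the indices $m$ (together with their interactions with the divisors of $M$) that appear with $h_{d,m,j}\neq 0$ in the lcm description \eqref{eq: f(t)inM-defn} of the polynomial generator: a prime dividing $u_n$ must divide some basic binomial factor $(\zeta_M^j\gamma^m)^n-1$, which forces the corresponding period to divide $n$. The ``moreover'' clause follows from LTE combined with the observation that the polynomially-generated structure forces $\gcd(r_\mathfrak{p},p)=1$ for all but finitely many primes: then $r_\mathfrak{p}\mid np$ forces $r_\mathfrak{p}\mid n$, giving $p\mid u_n$, and $v_\mathfrak{p}(u_{np})=v_\mathfrak{p}(u_n)+v_\mathfrak{p}(p)$ gives $p\mid u_{np}/u_n$.

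The main obstacle will be part (b): the standard Lucas-sequence toolbox delivers $v_p(u_n)\ll_p 1+v_p(n)$ almost for free, but obtaining the specific polynomial--exponential shape $(pe^{n/p})^C$ with an implicit constant independent of $p$ requires feeding the archimedean growth bound $|u_n|=e^{O(n)}$ in at exactly the right moment, and controlling the size of $r_\mathfrak{p}$ relative to $p$ uniformly across the various binomial factors and residue classes modulo $M$.
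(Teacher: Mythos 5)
Your overall route is the same as the paper's: bound the lcm in \eqref{eq: f(t)inM-defn} by a product of binomial terms, work one prime ideal $P$ above $p$ at a time, and run the rank-of-apparition/lifting-the-exponent argument (the paper's Proposition \ref{prop: P-div} and Corollary \ref{cor: P-div}). Parts (a) and (c) go through as you describe. But your argument for part (b) has a genuine gap exactly where you predicted trouble. You bound the base term by $v_{P}(\eta^{r_{P}}-\zeta)\log p\ll r_{P}$ and then control $r_{P}$ only via $r_{P}\mid n$, so $r_{P}\leq n$; this yields $p^{v_p(u_n)}\ll e^{Cn}$, not $(pe^{n/p})^{C}$. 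The proposed case split ``$r_{P}\leq p$ or $p\mid r_{P}$'' is neither exhaustive (it omits $p<r_{P}$ with $p\nmid r_{P}$, e.g.\ $r_{P}=n/p$) nor sufficient: in the case $r_{P}\leq p$ you only get $p^{v}\ll e^{Cp}$, and already for $n=p$ that is far weaker than the required $(pe)^{C}$. And the second case is vacuous, since $r_{P}$ divides $N(P)-1=p^f-1$ and hence is always coprime to $p$.

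The missing step is precisely the content of Proposition \ref{prop: P-div}(c): because $p\nmid r_{P}$ and $r_{P}\mid n$ with $p\mid n$, in fact $r_{P}\mid n/p$; equivalently, $P\mid\gamma^{n}-1=(\gamma^{n/p})^{p}-1$ forces $P\mid\gamma^{n/p}-1$. Then lifting the exponent (valid once $p>1+\deg(K/\mathbb Q)$, so that $v_P(p)/(p-1)<1$; smaller $p$ are absorbed by part (a)) gives
\[
v_{P}(\gamma^{n}-1)=v_{P}(\gamma^{n/p}-1)+v_{P}(p),
\]
and now the archimedean bound is applied at level $n/p$ rather than at level $r_P$ or $n$: the norm of $\gamma^{n/p}-1$ is $e^{O(n/p)}$, while $p^{v_{P}(p)}\leq p^{\deg(K/\mathbb Q)}$. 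Multiplying over the finitely many $P\mid p$, conjugates, and binomial factors gives $p^{v_p(u_n)}\ll(pe^{n/p})^{C}$. You already have the ingredient $\gcd(r_{P},p)=1$ in your part (c) (and it holds for all $P\nmid\gamma$, not just all but finitely many); importing it into part (b) and replacing the case split by the deduction $r_{P}\mid n/p$ closes the argument and reproduces the paper's proof.
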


This implies that each $\tau_a\in \mathbb Z$ for if prime $p$ divides the denominator of $\tau_a$ then $v_p( \tau_a^{k} )\leq -k$ whereas for $n=a+kM$ we have 
$v_p(\kappa_a' n^{\epsilon_a} \prod_{j=1}^J u_{j,n})   \ll   \log 2k$ by Lemma \ref{PolyGenDivByp}(a) and so $v_p(u_n)<0$ if $n$ is large and $\equiv a \pmod M$, contradicting that $u_n\in \mathbb Z$. 

Now for each prime $p$ which divides some $\tau_a$, let $h_a=v_p(\tau_a)$. Therefore $v_p(u_{a+kM}) = kh_a+O(\log k)$.
If $b\equiv ra \pmod M$ with $1\leq a,b,r\leq M$ then $u_{a+kM}$ divides $u_{ra+rkM}=u_{b+\ell M}$ where $\ell=rk+ [\frac{ra}m]$, and so
\[
 kh_a+O(\log k)= v_p(  u_{a+kM}) \leq  v_p( u_{b+\ell M} ) =\ell h_b +O(\log \ell)
\]
and so $kh_a\leq rkh_b +O(\log k)$ which implies that $h_a\leq rh_b$ letting $k\to \infty$. This is therefore a specialization of an exponential LDS in $\mathbb C[t]$, and so $(\tau_a^k)_{n=a+kM\geq 0}$ is an exponential LDS in $\mathbb Z$.
  
To appreciate the $\epsilon_a$-values, let $p$ be a prime  that does not divide the numerator or denominator of any non-zero $\kappa_a'$ or $\tau_a$.  Fix $a$ and $r$ with $b\equiv ra \pmod M$ and assume also $p\nmid  ar\prod_{j=1}^J u_{j,a} u_{j,ra}$ and so $p$ does not then divide $\prod_{j=1}^J u_{j,ap}u_{j,rap}$ by Proposition \ref{prop: P-div}(c).

  Now $u_{ap}$ divides $u_{arp}$ and so
  $\epsilon_a=v_p(u_{ap})\leq v_p(u_{arp}) =\epsilon_b $, looking at the exponent of $p$.  Now if $(a,M)=d$ then $\epsilon_d\leq \epsilon_a$ writing $a=Ad$, and as $d\equiv ra \pmod M$ for some $r$, we also have $\epsilon_a\leq \epsilon_d$. Therefore $\epsilon_a=\epsilon_d$ where $d=(a,M)$.  We also note that if 
  $ D=dr|M$ then $\epsilon_d\leq \epsilon_D$.  Letting  $\kappa_a=d^{\epsilon_d}\kappa_a'$, so $\kappa_a' n^{\epsilon_a}=\kappa_a (n/d)^{\epsilon_d}$,  the $\kappa_a$ form a periodic sequence of rationals, and the $(n/d)^{\epsilon_d}$ are a power LDS.
  
    This completes the proof of Theorem \ref{thm: main}. \hfill \qed


\subsection{Proof of Corollaries \ref{thm: main6} and \ref{thm: main2}}

\begin{proof}  [Proof of Corollary \ref{thm: main6}]
Suppose that $(u_n)_{n\geq 0}$ has period $M$. By Theorem \ref{thm: main}
\[
u_n=\kappa_1 n^e \lambda^{n-1} \prod_j u_{j,n} \text{ for all } n\equiv 1 \pmod M
\]
where $e=e_1$ and $\lambda^M\in \mathbb Z$, and each $u_{i,n}=(w_{i,n}(\gamma_i))^*$ with
$w_{i,n}(t)= f_n(t)/f_1(t)$ for some $f_n(t)$ as in \eqref{eq: f(t)inM-defn}. Moreover each of the factors  in \eqref{eq: f(t)inM-defn} appears in the 
construction of $w_{i,n}(t)$ in every arithmetic progression mod $M$.  Multiplying the expression in 
\eqref{eq: f(t)inM-defn}  out gives an expression for $u_n$ of the form
\[
u_n=  n^e \sum_{i=1}^m g_i  \zeta_{M}^{e_in} \delta_i^n
\]
with $e_i\in \mathbb Z$ for every $n\equiv 1 \pmod M$.  Now $(u_n)_{n\geq 0}$ is   non-degenerate so each term in this sum appears in the expression for $u_n$ for all $n$, never getting cancelled by another term. Therefore every $u_n$ can be given by the same expression as  for $n\equiv 1 \pmod M$, and the result follows, since $u_1=1$.
 \end{proof}

 We would like to know when  expressions like \eqref{eq: f(t)inM-defn}, multiplied out, are non-degenerate, as well as their products when $t$ is replaced by $\gamma_i$. In particular we would like to simplify the result in Corollary \ref{thm: main6} if possible. However, for any integer $M$ and $a(x)\in \mathbb Z[x]$, when we multiply out
 \[
  \prod_{\substack{1\leq j\leq M \\ (j,M)=1}} ( (\zeta_M^j a(\zeta_M^j))^n-1) ( a(\zeta_M^j)^{2n} -1)
\]      
there are typically no cancelations and so it seems unlikely that one can easily obtain a simpler answer than we have given.  For example, for $M=3$ and $\alpha\in \mathbb Z[\omega]$,
 \[
    ( (\omega \alpha)^n-1) ((\overline{\omega\alpha})^n-1) ( \alpha^{2n} -1)( \overline{ \alpha}^{2n} -1)
\]  
is non-degenerate and has 16 distinct terms, as long as  $\overline{ \alpha}/ \alpha$ is not a root of unity.\footnote{If $\overline{ \alpha}/ \alpha$ is a root of unity then $\alpha=r\zeta $ or $r\zeta \sqrt{-3} $ where $r\in \mathbb Z$ and $\zeta\in \{\pm 1,\pm \omega, \pm \overline{\omega})$.}
      
 \begin{proof}  [Proof of Corollary \ref{thm: main2}]
 Theorem \ref{thm: main} shows what LDS's in $\mathbb Z$ look like.
 Moreover we can only have $\limsup_{n\to \infty:\ u_n\ne 0} v_p(u_n)/n =0$ for every prime $p$ if each $\tau_a=1$, by 
 Lemma \ref{PolyGenDivByp}(a). Therefore     for $n\equiv a \pmod M$ with $(a,M)=d$ we have
  \begin{equation} \label{eq: un-value2c}
u_n=  \kappa_a (n/d)^{\epsilon_d}   \prod_{j=1}^J u_{j,n}  
\end{equation}
 the product of a periodic sequence of rationals $\kappa_a$, a power LDS,   and a finite number of polynomially generated LDS's. Now  $u_d=\kappa_d$ whenever $d$ divides $M$. Hence if $d$ divides $M$ then $\kappa_d\in \mathbb Z$ and 
 $\kappa_d$ divides $\kappa_D$ if $d|D|M$.

 If prime $p$ divides the denominator of some $\kappa_a$ then $p$ divides $(n/d)^{\epsilon_d}   \prod_{j=1}^J u_{j,n}$ whenever $n\equiv a \pmod M$, which happens only if each such $n$ is divisible by at least one of $n_1, \dots,n_k>1$  (this list should include $p$ for divisibility of $n/d$) by the last part of Lemma \ref{PolyGenDivByp}(c).   However one can construct an integer $n\equiv a \pmod M$ that is not divisible by any element of the list except divisors of $d=(a,M)$.  Therefore $\kappa_a\in \mathbb Z$   if $(a,M)=1$.   For other $d$ note that  $(u_{rd}/u_d)_{r\geq 0}$ is a LDS of period $M/d$
 and so   the constant  $\kappa'_{a/d}$ ($=\kappa_a/\kappa_d$) $ \in \mathbb Z$ and then $\kappa_a\in \mathbb Z$ and is divisible by $\kappa_d$.

Now suppose that $d=(a,M)\equiv ar \pmod M$ for some integer $r \pmod {M/d}$.  Therefore if $R\equiv r \pmod {M/d}$ then $u_a$ divides $u_{Ra}$ which implies that 
\[
\kappa_a \text{ divides } \kappa_d R^{\epsilon_d}  v_R, \text{ where } v_R:=  \prod_{j=1}^J \frac{u_{j,Ra}}{u_{j,a}} 
\]
 is a polynomially generated LDS. Suppose that prime $p$ divides $\kappa_a$. There exists an integer $L$ (the product of the $n_i$'s in Lemma \ref{PolyGenDivByp})(c)
 such that if $(R,L)=1$ then 
 $p\nmid v_R$ by  Lemma \ref{PolyGenDivByp}(c). Therefore we select an integer $R$ for which $(R,pL)=1$ and $R\equiv r \pmod {M/d}$, which is possible as $(r,M/d)=1$, and we deduce that $p\nmid  R^{\epsilon_d}  v_R$. Therefore $\kappa_a$ divides $\kappa_d$. We conclude that $(\kappa_a)_{a\geq 0}$ is a periodic LDS.
\end{proof}

\subsection{Determining the ``factors'' (of Theorem \ref{thm: main}) of a given LDS} \label{sec: FindFactors}

 Given a LDS $(u_n)_{n\geq 0}$ in the form \eqref{eq: Formula} we first determine its period and then create an explicit formula for $u_n$ in each arithmetic progression mod $M$ which takes account of any cancelation of terms in \eqref{eq: Formula} for that progression. We can then  easily determine the periodic LDS (from the $u_d$-values for $d|M$), 
  the power LDS (from the largest power of $n$ in the explicit formula dividing each term when $(n,d)=M$), and the  exponential  LDS (from the gcd of the uncanceled $\alpha_i$'s). 
  We are left with a LDS $(v_n)_{n\geq 0}$ which is the product of   polynomially generated LDS's and so the $g_i$'s in the formula  \eqref{eq: Formula} for $v_n$ are all constants.
 
  Each of these polynomially generated LDS's comes from
  taking  the  lcm of   factors of the form $(\eta^n-1)^*$.
 To complete our understanding we want to determine these   $\eta$-values (and the powers to which they appear) from the formula  \eqref{eq: Formula} assuming $(u_n)_{n\geq 0}$ is a polynomially generated LDS.
 (We replace $v_n$ by $u_n$ for notational convenience.)

 If $\mathcal I\in \mathcal I^*$ then there exists an algebraic number $\eta_{\mathcal I}$ such that
 \[
  \alpha_i=\zeta_M^{a_i} \alpha_j \eta_I^{r_i} \text{ for each }  i\in I_j, i\ne j,  1\leq j\leq h
 \]
 where the $a_i, r_i\in \mathbb Z$ with $\gcd_i r_i=1$ and some $a_i=0$ (by Lemma \ref{lem: premultdep}). Select $d_{\mathcal I}$ to be the smallest positive integer for which each $\zeta_M^{a_id_{\mathcal I}}=1$ so that if $d_{\mathcal I}$ divides $n$ then
 \[
 \gcd( (\alpha_i/\alpha_j)^n-1 :  i\in I_j, i\ne j,  1\leq j\leq h)^* = (\eta_{\mathcal I}^n-1)^*
 \]
by Corollary \ref{Cor: multfep}.
In this case we have
\[
u_n =  \sum_{j=1}^h \alpha_j^n   f_{I,j}(\eta_{\mathcal I}^n) \text{ where } f_{I,j}(t):= \sum_{i\in I_j} g_i t^{r_i} \in K[t]
\]
with $f_{I,j}(1)=g_{I_j}=0$. Let $e_{\mathcal I}\geq 1$ be the largest integer for which 
\[
(t-1)^{e_{\mathcal I}} \text{ divides }  f_{I,j}(t) \text{ for } 1\leq j\leq h,
\]
so that if $d_{\mathcal I}$ divides $n$ then $(\eta_{\mathcal I}^n-1)^{e_{\mathcal I}}$ divides $u_n$.
 
Partition the set $\mathcal I^* $ into subsets $S_1\cup\cdots \cup S_\ell$ where
the $\eta_{\mathcal I}, \mathcal I \in S_j$ are multiplicatively dependent (and independent of the $\eta_{{\mathcal I}'}$ for all  ${\mathcal I}'\not \in S_j$). Therefore there exists an  algebraic integer $\eta_j$ such that 
$\eta_{\mathcal I}=\zeta_M^{\tau_{\mathcal I}} \eta_j^{g_{\mathcal I}}$ 
where $\tau_{\mathcal I}, g_{\mathcal I}\in \mathbb Z$ for all $\mathcal I \in S_j$, with $\gcd_{\mathcal I \in S_j} g_{\mathcal I}=1$. Finally let
\[
h_{j,n}(t) = \underset{\substack{\  \mathcal I \in S_j \\ \,  d_{\mathcal I} \text{ divides } d }}\lcm \bigg(   
\frac{(\zeta_M^{\tau_{\mathcal I}} t^{g_{\mathcal I}})^n-1}{(\zeta_M^{\tau_{\mathcal I}} t^{g_{\mathcal I}})^{d}-1}    \bigg)^{e_{\mathcal I}} \text{ where } d=(n,M).
\]

Now $(\zeta_M^{\tau_{\mathcal I}} t^{g_{\mathcal I}})^n-1=(\eta_{\mathcal I})^n-1$, and so each
$(\frac{(\zeta_M^{\tau_{\mathcal I}} t^{g_{\mathcal I}})^n-1}{(\zeta_M^{\tau_{\mathcal I}} t^{g_{\mathcal I}})^{d}-1}    )^{e_{\mathcal I}}$ divides $u_n$, and therefore the polynomial-lcm, given by $h_{j,n}(\eta_j)$ also divides $u_n$.  Our goal is to prove that 
 \begin{equation} \label{eq: un-structure}
u_n= \pm \prod_{j=1}^\ell h_{j,n}(\eta_j)^*,
\end{equation}
and we have proved that the right-hand side divides the left since the $\eta_j$ are multiplicatively independent
they give rise to distinct polynomially generated LDS's.. Since $u_n$ is a polynomially generated LDS, we now  complete the proof of \eqref{eq: un-structure} by showing that we have already accounted for all factors of $u_n$ of the form $\eta^n-1$.

So suppose $\eta^n-1$ divides $u_n$ for all $n$ divisible by $d$ (where $d$ divides $M$). By the Hadamard quotient theorem applied to $(u_{dm})_{m\geq 0}$ there exists another linear recurrence sequence $(v_n)_{n\geq 0}$ for which 
$u_n=(1-\eta^n)v_n$ whenever $n\equiv a \pmod M$. We write  $(v_n)_{n\geq 0}$ in the form \eqref{eq: Formula}, 
say $v_n=\sum_{i} h_i(n) \beta_i^n$ partitioning these terms into sets $I_1^-\cup\dots \cup I_\ell^-$ 
so that the $\beta_i/\beta_j$ is multiplicatively dependent with $\eta$ for all $i,j\in I_j^-$, where we 
 select $\beta_j\in I_j^-$ so that  if $\beta_i/\beta_j=\zeta_i \eta^{r_i}$ then $r_i\geq 0$ for all $i\in I_j^-$. Therefore
\[
v_n = \sum_{j=1}^\ell \beta_j^n \cdot \sum_{i\in I_j^-} h_i(n)(\beta_i/\beta_j)^n \text{ and } 
u_n =  \sum_{j=1}^\ell \beta_j^n \cdot \sum_{i\in I_j^-} h_i(n)(\beta_i/\beta_j)^n(1-\eta^n),
\]
the only cancelation that can take place when multiplying through by $\eta^n-1$ is among the terms in each $I_j^-$
and so $\beta_j$   cannot be canceled as it contains $\eta$ to the lowest degree. We therefore write $\alpha_j=\beta_j$ and $g_j(n)=h_j(n)$ for $1\leq j\leq \ell$. When we multiply out the $j$th polynomial here we have
\[
\sum_{i\in I_j^-} h_i(n)(\beta_i/\beta_j)^n(1-\eta^n) = \sum_{i\in I_j^-} h_i(n)(\zeta_i \eta^{r_i})^n 
- \sum_{i\in I_j^-} h_i(n)( \zeta_i \eta^{r_i+1})^n;
\]
which yields the formula \eqref{eq: Formula}. Therefore
  the $\alpha$-values from \eqref{eq: Formula} that give the set $I_j$ are a subset of 
$\{ \zeta_i \eta^{r_i}\beta_j: i\in I_j^-\} \cup \{ \zeta_i \eta^{r_i+1}\beta_j: i\in I_j^-\}$
where we remove $\zeta_{i'} \eta^{r_{i'}}\beta_j =  \zeta_i \eta^{r_i+1}\beta_j$
if and only if $h_i(n)=h_{i'}(n)$. Note that the ratios $\alpha_I/\alpha_J$ are all ratios of the form 
$\beta_i/\beta_j \eta^{-1, 0 \text{ or } 1}$ and so are multiplicatively dependent on $\eta$ and therefore each other.
Moreover $g_{I_j}=  h_{  I_j^-} - h_{  I_j^-} =0$, and so we have proved that 
$I_1\cup\dots \cup I_\ell \in \mathcal I^*$, which means that $\eta^n-1$ was already accounted for in our construction of the right-hand side of \eqref{eq: un-structure}.

\subsection{The $P$-divisibility of polynomially generated LDS's} \label{sec: P-divisibilty}

Throughout this subsection we are given a prime ideal $P$ dividing a rational prime $p$, and an algebraic number $\gamma$, not $0$ or a root of unity nor divisible by $P$





\begin{prop} \label{prop: P-div}  
  For each integer $e\geq 1$ let $g_e\geq 1$ be the order of $\gamma \pmod {P^e}$ so that if $v_P(\gamma^j-1)\geq e$ then $g_e$ divides $j$.
\begin{enumerate}[(a)]
\item If $v_P(\gamma^j-1)\geq e>\frac{ v_P(p)}{p-1}$  then
$v_P(\gamma^j-1)=v_P( \gamma^{g_e}-1)+v_P(j/g_e)$.
\item If $P$ divides $\gamma^j-1$ then $P$ divides $\frac{ \gamma^{jr}-1}{ \gamma^j-1}$ if and only if $p$ divides $r$.
\item If $P$ divides $\gamma^{ap}-1$ then $P$ divides $\gamma^{a}-1$ and so $p\nmid g_1$.
\end{enumerate}
\end{prop}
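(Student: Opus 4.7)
The plan is to prove the three parts in the given order, relying on standard $p$-adic valuation calculus (essentially lifting-the-exponent in the completion at $P$) for (a), the geometric sum identity for (b), and Fermat's little theorem in the residue field for (c).

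For part (a), by the defining property of $g_e$ the hypothesis $v_P(\gamma^j - 1) \geq e$ forces $g_e \mid j$, so writing $k = j/g_e$, $\eta = \gamma^{g_e}$ and $\pi = \eta - 1$, the claim reduces to
\[
v_P\bigl((1+\pi)^k - 1\bigr) = v_P(\pi) + v_P(k)
\]
under $v_P(\pi) \geq e > v_P(p)/(p-1)$. Expanding $(1+\pi)^k - 1 = k\pi + \sum_{i\geq 2} \binom{k}{i}\pi^i$, I plan to show the linear term $k\pi$ strictly dominates every higher term. Using $i\binom{k}{i} = k\binom{k-1}{i-1}$ one has $v_p\bigl(\binom{k}{i}\bigr) \geq v_p(k) - v_p(i)$, so each higher term's $P$-valuation is at least $v_P(k) - v_P(i) + i\,v_P(\pi)$. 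Comparing with $v_P(k\pi)$, the required inequality $(i-1)\,v_P(\pi) > v_P(i)$ becomes, after writing $i = p^s m$ with $p \nmid m$ and noting $v_P(i) = s\,v_P(p)$, the claim $(i-1)/(p-1) \geq s$, which follows from the elementary identity $p^s - 1 = (p-1)(1 + p + \cdots + p^{s-1}) \geq s(p-1)$; the strict inequality $v_P(\pi) > v_P(p)/(p-1)$ then upgrades $\geq$ to $>$ and gives the desired strict dominance.

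For part (b), the geometric identity $\frac{\gamma^{jr} - 1}{\gamma^j - 1} = \sum_{i=0}^{r-1}(\gamma^j)^i$, combined with $\gamma^j \equiv 1 \pmod{P}$, shows the quotient is $\equiv r \pmod P$, so $P$ divides it iff $P \mid r$ iff $p \mid r$. For part (c), I first observe that since $\gamma$ is a unit at $P$, its image lies in $(A/P)^{*}$, a group of order $p^f - 1$ (where $f$ is the residue degree of $P$ over $p$), which is coprime to $p$; hence $g_1 \mid p^f - 1$, so $p \nmid g_1$. Given $P \mid \gamma^{ap} - 1$ we then have $g_1 \mid ap$, and $\gcd(g_1, p) = 1$ forces $g_1 \mid a$, i.e., $P \mid \gamma^a - 1$.

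The main obstacle will be the valuation bookkeeping in (a); the threshold $e > v_P(p)/(p-1)$ is precisely what is needed for the binomial tail to be dominated by its linear term, and getting the strictness right---notably in edge cases such as $p = 2$, $s = 1$ where $p^s - 1 = s(p-1)$---is the delicate point that ultimately relies on the hypothesis being a strict inequality.
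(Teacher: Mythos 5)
Your proof is correct and follows essentially the same lifting-the-exponent strategy as the paper: both rest on expanding powers of $1+\pi$ with the threshold $e>v_P(p)/(p-1)$ guaranteeing that the linear term dominates, the paper merely organizing part (a) as an iteration of the single step $r=p$ rather than your one-shot estimate of the whole binomial tail, and using $(1+\pi)^r\equiv 1+r\pi \pmod{P^{2e}}$ in place of your geometric-sum identity for part (b). Part (c) is identical in both arguments (Fermat's little theorem in the residue field), and your handling of the tight case $i=2$, $p=2$ via the strictness of the hypothesis is exactly the point the paper's condition $e>v_P(p)/(p-1)$ is designed for.
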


\begin{proof}  Let $\gamma^j=1+\pi$ where $\pi\in P^e\setminus P^{e+1}$.  
Therefore $\gamma^{jr}=(1+\pi)^r\equiv 1+r\pi \pmod {P^{2e}}$, and so 
$v_P(\gamma^{jr}-1)>e$ if and only if $p|r$.   If, say $r=p$ then 
\[
\gamma^{jp}-1= \sum_{j=1}^{p-1} \frac 1j  \binom {p-1}{j-1} \cdot p \pi^j + \pi^p  \equiv  p \pi \pmod {P^{e+1+v_P(p)}}
\]
provided $pe=v_P( \pi^p)>v_P(p\pi)=e+v_P(p)$, that is $e>\frac{ v_P(p)}{p-1}$. We deduce that if $p\nmid r$ then 
$v_P( \gamma^{jrp^f}-1 )=e+fv_P(p)=e+v_P(p^f)=e+v_P(rp^f)$. 

If $P$ divides $\gamma^{ap}-1$ then  $P\nmid \gamma$ and so there exists $f\geq 1$ such that $\gamma^{p^f-1}\equiv 1 \pmod P$. Hence
  $\gamma^a \equiv  \gamma^{a p^f} \equiv  (\gamma^{ap})^{ p^{f-1}}\equiv 1 \pmod P$ so that $P|\gamma^a-1$.\end{proof}

\begin{corollary} \label{cor: P-div}  Let $w_n=\frac{ \gamma^{n}-1}{ \gamma-1}$ where $\gamma\in \overline{\mathbb Q}^*$ but not a root of unity.
There exists  $n_P>1$ such that if $w_n\equiv 0 \pmod P$ then $n_P$ divides $n$. Moreover
$v_P(w_n)\ll_{P,w} 1+ \cdot v_P(n)$.
\end{corollary}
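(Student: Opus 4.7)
\medskip

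\noindent\textbf{Proof proposal.} The plan is to rewrite things through the identity
\[
v_P(w_n)=v_P(\gamma^n-1)-v_P(\gamma-1),
\]
which is valid since $(\gamma-1)w_n=\gamma^n-1$ and $\gamma\ne 1$ (as $\gamma$ is not a root of unity, so $v_P(\gamma-1)$ is finite). Once we have this, both conclusions follow from Proposition~\ref{prop: P-div} applied to $\gamma^n-1$.

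For the divisibility statement, set $E:=v_P(\gamma-1)$ and $n_P:=g_{E+1}$, the order of $\gamma$ modulo $P^{E+1}$. Since $\gamma\not\equiv 1\pmod{P^{E+1}}$, we have $n_P>1$. Now $P\mid w_n$ is equivalent to $v_P(\gamma^n-1)\ge E+1$, which by the definition of $g_{E+1}$ happens if and only if $n_P\mid n$. (If $E+1$ happens to exceed $v_P(p)/(p-1)$ we are immediately in the regime of Proposition~\ref{prop: P-div}(a); otherwise the statement ``$\gamma^n\equiv 1\pmod{P^{E+1}}$ iff $g_{E+1}\mid n$'' is simply the definition of multiplicative order and requires nothing beyond it.)

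For the upper bound, fix any integer $e_0$ with $e_0>v_P(p)/(p-1)$ (depending only on $P$), and set $g:=g_{e_0}$. Split into two cases. If $v_P(\gamma^n-1)<e_0$, then trivially $v_P(w_n)<e_0$, a constant depending only on $P$ and $\gamma$. Otherwise Proposition~\ref{prop: P-div}(a) gives
\[
v_P(\gamma^n-1)=v_P(\gamma^g-1)+v_P(n/g)\le v_P(\gamma^g-1)+v_P(n),
\]
so $v_P(w_n)\le v_P(\gamma^g-1)-E+v_P(n)$. Combining the two cases yields
\[
v_P(w_n)\le C_{P,\gamma}+v_P(n),
\]
which is the claimed bound $v_P(w_n)\ll_{P,w}1+v_P(n)$.

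The proof is essentially a bookkeeping exercise on top of Proposition~\ref{prop: P-div}, so I do not expect a real obstacle; the only mild subtlety is that when $v_P(\gamma-1)$ is small (below the $v_P(p)/(p-1)$ threshold), one cannot directly invoke part~(a) at level $e=E+1$, which is why I choose $n_P$ as the order modulo $P^{E+1}$ (elementary) while using the proposition only at level $e_0$ to get the uniform growth bound.
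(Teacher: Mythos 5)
Your proof is correct and takes essentially the same route as the paper: both reduce to Proposition~\ref{prop: P-div} through the identity $v_P(w_n)=v_P(\gamma^n-1)-v_P(\gamma-1)$. The paper simply sets $n_P=g_1$ when $g_1>1$ and $n_P=p$ otherwise (via part (b)); your choice $n_P=g_{E+1}$ is the same integer in both cases, and your explicit derivation of the bound $v_P(w_n)\le C+v_P(n)$ from part (a) just spells out what the paper leaves implicit.
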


\begin{proof} 
This follows from Proposition \ref{prop: P-div} with $n_P=g_1$ unless $g_1=1$, in which case $n_P=p$.
\end{proof}

\begin{proof} [Proof of Lemma \ref{PolyGenDivByp}] Let $P$ be a prime ideal dividing $p$.
By the formula in \eqref{eq: f(t)inM-defn} and the description of  polynomially generated LDS's, if $p^e$ divides $u_n$ then $P^e$ divides 
the numerator of a fixed product of terms of the form
\[
    \frac{\zeta_M^{nb}\gamma^{nk}-1}{\zeta_M^{b}\gamma^k-1}  
\] 
for some algebraic number $\gamma$ that is not a root of unity and not divisible by $P$ (by bounding the lcm's in the definition by a product).  Therefore Lemma \ref{PolyGenDivByp}(a) and the first part of (c) follow from applying
Corollary \ref{cor: P-div} to each such term.   The rest of Lemma \ref{PolyGenDivByp}(c) follows from Proposition \ref{prop: P-div}(b) and (c).

We may assume that $p>1+ \deg (K/\mathbb Q)$ else the result follows from (a).
Then $\frac{ v_P(p)}{p-1}<1$ as   $v_P(p)\leq \deg (K/\mathbb Q)$.
Therefore if  $P^v$ divides $\gamma^n-1$ where $p|n$ then $P$ divides $\gamma^{n/p}-1$ by 
Proposition \ref{prop: P-div}(c), and then $v=v_P(\gamma^{n/p}-1)+v_P(p)$ by 
Proposition \ref{prop: P-div}(a). Multiplying together the conjugates yields
Lemma \ref{PolyGenDivByp}(b).  
\end{proof}

\section{Strong linear division sequences}
A strong linear division sequence is a linear recurrence sequence  for which $(u_n,u_m)=|u_{(n,m)}| \text{ for all integers } m,n\geq 1$.
Therefore if $m$ divides $n$ then  $u_m|u_n$ so $(u_n)_{n\geq 0}$ is a LDS, and so we can start  understanding strong LDS's by using   the classification of LDS's in Theorem \ref{thm: main}.

Now if $u_n=a_nb_n$ where $a_n,b_n$ are LDS's then
\[
u_{(n,m)} = a_{(n,m)}b_{(n,m)} \text{ divides } (a_m,a_n)\cdot (b_m,b_n) \text{ which divides }
(u_m,u_n)=u_{(n,m)}
\]
which implies that $a_n$ and $b_n$ are also strong LDS's. By factoring like this we can rule out many possibilities for strong LDS's, by ruling out possible factors:

\subsection*{No strong exponential LDS's} We have $u_{a+kM}=\kappa_a \tau_a^k$.  Taking $k$ large we have $d=(a,M)=(a+kM, a+(k+1)M)$ so that 
$u_d=(\kappa_a \tau_a^k, \kappa_a \tau_a^{k+1}) = \kappa_a \tau_a^k$ and so $\tau_a=1$.

This implies that   strong LDS's are part of the classification in Corollary \ref{thm: main2}, and so must be a product of a periodic LDS, a power LDS and a finite number of polynomially generated LDS's.

\subsection*{Periodic LDS's}
Suppose that  $u_n$ is a strong periodic LDS. For any divisors $d,D$ of $M$ we have
$(u_d,u_D)=u_{(d,D)}$. One can easily write out precise criteria for when this happens.

\subsection*{The only strong power LDS's are a fixed power of $u_n=n/d$} Here  $u_n=   (n/d)^{e_d}$ where $(n,M)=d$. Suppose that $d\ne D$ both divide $M$ with $e_D<e_d$.
We can select $a$ and $b$ to be primes for which $e_D<a\leq b< ae_d/e_D$. 
Let $g=(d,D)$ and $p$ be a prime not dividing $M$. Then
\[
p^{ae_g}= u_{p^ag}= (u_{p^ad}, u_{p^bD}) = ( p^{ae_d}, p^{be_D}) = p^{be_D},
\]
so that $ae_g=be_D$, which is impossible as $(a,b)=1$ and $a\nmid e_D$ as $a>e_D$. We deduce that the $e_d$ are all equal and so
the claim holds.

\subsection*{No strong product of two independent polynomially generated LDS}
Let $u_n=(\alpha^n-1)(\beta^n-1)$ where  $\alpha$ and $\beta$ are  non-zero multiplicatively independent algebraic numbers in $K$.  We will show in Proposition \ref{prop; cebapple} of Appendix \ref{App A} that there exist primes $q$ and $r$ and prime ideals $P$ of $K$ with $v_P(\alpha)=v_P(\beta)=0$ such that 
$\alpha$ is a $q$th power mod $P$, but not $\beta$, and $\beta$ is an $r$th power mod $P$, but not $\alpha$, by 
applying the \v Cebotarev density theorem    to the factorization of 
\[
(x^q-\alpha)(x^q-\beta)(x^r-\alpha)(x^r-\beta) \pmod P.
\]
Therefore  $q$ divides $b:=\text{ord}_P(\beta)$, but not $a:=\text{ord}_P(\alpha)$, so $b$ does not divide $a$; and, similarly,  $r$ divides $a$, but not $b$, so $a$  does not divide $b$.  Now  $P$ divides $u_a$ and $u_b$, so if
$(u_n)$ is a strong LDS then $P$ divides $u_g$ where $g=(a,b)<a,b$.
 However this implies that $P$ divides $\alpha^g-1$ or $\beta^g-1$ which is false as $g<a=\text{ord}_P(\alpha)$ and $g<b=\text{ord}_P(\beta)$.

\subsection*{No strong product or  lcm of two  polynomial   LDS's}
Let $u_n=[t^{an}-1, t^{bn}-1] $ or $(t^{an}-1)(t^{bn}-1)$ with $a<b$; we can take $(a,b)=1$ if necessary by replacing $t^g$ by $t$ where $g=(a,b)$.
Now $t^{ab}-1$ divides $(u_a,u_b)=u_1$ which divides $(t^{a}-1)(t^{b}-1)$. This is only possible if $b=ab$ that is, if $a=1$.\footnote{In our earlier  LDS example $u_n=[F_{2n}, F_{3n}]$ we have $F_6=8$ divides $(u_2,u_3)$ but   not $u_1=2$.}
In the lcm case $u_n= t^{bn}-1$ so we only need consider the product $(t^{n}-1)(t^{bn}-1)$.

If we had to consider a pair $\zeta_M^{in} t^{an}-1,\zeta_M^{jn} t^{bn}-1$ then we let $n=MN$ and $T=t^M$, and reduce to to the pair $T^{aN}-1,T^{bN}-1$ as above.

\subsection*{No strong product of polynomially generated LDS's}
We reduced to the case of a strong LDS of the form $u_n=(\gamma^{n}-1) (\gamma^{bn}-1)$ in the previous paragraph. Suppose  that  prime ideal $P$ divides $\gamma^{br}-1$  but not  $ b $, and so $P\nmid \gamma^{r}-1$
by Proposition \ref{prop: P-div} (else $p|b$). Then $P(\gamma^{br}-1)$ divides $u_{br}$. Moreover 
$p $ divides $ \frac {\gamma^{pbr}-1}{\gamma^{br}-1}$ by Proposition \ref{prop: P-div}, and so
$P(\gamma^{br}-1)$ divides $\gamma^{pbr}-1$ which divides $u_{pr}$. Therefore
$P(\gamma^{br}-1)$ divides $(u_{br},u_{pr})=u_r$, so that $P$ divides $\gamma^{r}-1$ which is false.

\subsection*{Simple polynomially generated LDS's}
We have  proved that any  polynomially generated LDS that divides a strong LDS must be the power of one 
polynomially generated LDS,  of the form $(\frac{\gamma^n-1}{\gamma-1})^*$. Let $\gamma=\alpha/\beta$ with $N((\alpha,\beta))$ as small as possible,
so that the power of a polynomially generated LDS takes the form $L_n(\alpha,\beta)^m$ where $L_n(x,y)=\frac{x^n-y^n}{x-y}$, perhaps divided by some $q^{n-1}$ and lies in $\Z$.
By Lemma \ref{lem: Lehmer} we know that either $L_n(\alpha,\beta)$ is a Lucas sequence in the integers, or a Lehmer sequence (as in section \ref{sec: conjugates}) with $m$ even.

 
\subsection*{Summary so far}
By  Corollary  \ref{thm: main2} we therefore deduce that if $u_n$ is a strong LDS of period $M$ in the integers it takes the form
\[
u_n = \kappa_d (n/d)^r \bigg(  \frac{\alpha^n-\beta^n}{\alpha^d-\beta^d} \bigg)^s \text{ where } d=(n,M),
\]
$r$ and $s$ are integers $\geq 0$,   $ \kappa_n$ is a strong periodic LDS of period $M$ in $\Z$ and the final term is a power of a  (suitably normalized)
Lucas sequence   or a  Lehmer sequence

\subsection*{At least one of $r$ and $s$ equals $0$} Suppose not so that $p|u_p$ if $p\nmid M$. 
If $\alpha,\beta\in \Z$ let  $d=\alpha\beta$ and $e=p-1$; if not, suppose that  $d=(\alpha-\beta)^2$ for Lucas sequences, and $d=rs$ for Lehmer sequences
and let $e=p-(\frac dp)$.  Now select $p> Md\cdot \frac{\alpha^{M}-\beta^{M}}{\alpha-\beta}$ so that 
 $p$ divides $\frac{\alpha^{e}-\beta^{e}}{\alpha^D-\beta^D}$ where $D=(e,M)$, which divides $u_{e}$. Hence
$p$ divides $(u_p,u_{e})=(u_p,u_{p\pm 1})=u_1=1$, a contradiction.

\subsection*{Divisibility criteria} 
Let  $D$ be the smallest integer for which a given prime power $p^e$ divides $u_D$.
If  $p^e$ divides $u_n$ then $p^e$ divides $(u_n,u_D)=\pm u_{(n,D)}$; by minimality $D\leq (n,D)$ and so $D$ divides $n$.
The last statement in the result follows.

 If $u_n=\kappa_d (n/d)^r $ with $r\geq 1$ and $p^e|\kappa_M$ then $D$ divides $M$ and $D$ divides $n=p^f$ when $f$ is sufficiently large, so that 
 $D$ must be a power of $p$. Hence each $\kappa_{p^i}$ is a power of $p$, and so  $\kappa_D = \prod_{p^m\| D} \kappa_{p^m}$, that is,
 $\kappa$ multiplicative.
  
 This completes the proof of Theorem \ref{thm: main3}. \hfill \qed

\subsection{Failing to generalize the Lehmer sequence} \label{sec: Lehmerfail}
  Let $L_n(x,y)=\frac{x^n-y^n}{x-y}$, and note that if $\alpha/\beta$ is not a root of unity then  $L_n(\alpha,\beta)\ne 0$ for all $n\geq 1$.
  We are interested in when $L_n(\alpha,\beta)^m\in \Q$ for all $n\geq 1$. We already know about the Lucas sequences for $m=1$ (where $\alpha,\beta$ are integers, or conjugate quadratics) and the Lehmer sequences of section \ref{sec: conjugates} for $m=2$. We now show there are essentially no other examples:

\begin{lemma}  \label{lem: Lehmer} Suppose that $\alpha$ and $\beta$ are non-zero algebraic integers and $\alpha/\beta$ is not a root of unity, for which  $L_n(\alpha,\beta)^m\in \Z$ for all $n\geq 0$  for some integer $m\geq 2$. There exists an algebraic integer $\gamma$ for which 
$\alpha=\gamma\alpha', \beta=\gamma\beta'$ with $\gamma^m\in \Q$ and each $L_n(\alpha',\beta')\in \Q$, or  $m$ is even and there exists coprime integers $r,s$ such that $\alpha'=\sqrt{r}+\sqrt{s}, \beta'=\sqrt{r}-\sqrt{s}$ so  that
$L_n(\alpha',\beta')^2\in \Q$.
\end{lemma}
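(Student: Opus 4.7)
The plan is to exploit the Galois invariance of $L_n(\alpha,\beta)^m$ to pin down the Galois action on the ordered pair $(\alpha,\beta)$. Let $L$ denote the Galois closure of $\mathbb{Q}(\alpha,\beta)$ over $\mathbb{Q}$ and set $G:=\mathrm{Gal}(L/\mathbb{Q})$. Since $\alpha/\beta$ is not a root of unity, $L_n(\alpha,\beta)\ne 0$ for every $n\ge 1$, so the hypothesis $L_n^m\in\mathbb{Q}$ yields $\sigma(L_n)=\zeta_{\sigma,n}L_n$ with $\zeta_{\sigma,n}\in\mu_m$ for each $\sigma\in G$. Expanding the binomial gives
\[
L_n^m=\sum_{j=0}^m c_j\,\lambda_j^{\,n},\qquad \lambda_j:=\alpha^{m-j}\beta^j,\qquad c_j:=\frac{(-1)^j\binom{m}{j}}{(\alpha-\beta)^m},
\]
with all $c_j$ nonzero and the $\lambda_j$ pairwise distinct. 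By uniqueness of the minimal representation of a rational linear recurrence, each $\sigma$ permutes $\{\lambda_0,\dots,\lambda_m\}$ via $\sigma\lambda_j=\lambda_{\pi(j)}$ for some permutation $\pi$ of $\{0,\dots,m\}$.

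Next I would observe that $\sigma\lambda_{j+1}/\sigma\lambda_j=\sigma(\beta)/\sigma(\alpha)$ is independent of $j$, whereas $\lambda_{\pi(j+1)}/\lambda_{\pi(j)}=(\beta/\alpha)^{\pi(j+1)-\pi(j)}$, forcing $\pi(j+1)-\pi(j)$ to be constant and hence $\pi$ affine. Being a bijection of $\{0,\dots,m\}$ it must be $\pi=\mathrm{id}$ or $\pi(j)=m-j$, and unwinding each case gives the dichotomy: either (Type~I) $\sigma\alpha=\zeta_\sigma\alpha,\ \sigma\beta=\zeta_\sigma\beta$, or (Type~II) $\sigma\alpha=\zeta_\sigma\beta,\ \sigma\beta=\zeta_\sigma\alpha$, in both cases with $\zeta_\sigma\in\mu_m$. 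Setting $\gamma:=\alpha+\beta$, which is an algebraic integer and is nonzero since $\alpha/\beta\ne-1$, one checks in both types that $\sigma\gamma=\zeta_\sigma\gamma$ and $\sigma(\alpha\beta)=\zeta_\sigma^{\,2}\alpha\beta$; thus $\gamma^m\in\mathbb{Q}$ and $\alpha\beta/\gamma^2\in\mathbb{Q}$. Writing $\alpha=\gamma\alpha'$, $\beta=\gamma\beta'$ gives $\alpha'+\beta'=1$ and $\alpha'\beta'\in\mathbb{Q}$, whence $L_n(\alpha',\beta')$ satisfies a recurrence over $\mathbb{Q}$ with initial data $0,1$ and lies in $\mathbb{Q}$ for all $n$; this proves the first branch.

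Finally, the Lehmer alternative arises precisely when some Type II element has $\zeta_\sigma=-1$, which forces $m$ to be even since $-1\in\mu_m$ only then. A rescaling by a rational factor of the $(\alpha',\beta')$ from the previous paragraph then puts the pair in the canonical shape $(\sqrt{r}+\sqrt{s},\sqrt{r}-\sqrt{s})$ for coprime integers $r,s$, and $L_n(\sqrt{r}+\sqrt{s},\sqrt{r}-\sqrt{s})^2\in\mathbb{Q}$ follows directly from $(\alpha'+\beta')^2=4r$, $(\alpha'-\beta')^2=4s$, and $\alpha'\beta'=r-s$. The most delicate step is the affine-permutation argument: pinning down $\pi$ requires both the nonvanishing of all $m+1$ binomial coefficients and the distinctness of the $\lambda_j$ guaranteed by the root-of-unity hypothesis, together with the observation that the degenerate possibility $\alpha+\beta=0$ is ruled out by $\alpha/\beta\ne-1$.
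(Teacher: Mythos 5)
Your central argument is correct and takes a genuinely different---and substantially shorter---route to the hard step than the paper does. The paper also starts from the fact that $G:=\mathrm{Gal}(L/\mathbb Q)$ permutes the characteristic roots $\alpha^{m-j}\beta^j$ of $(L_n^m)$ (equivalently, that $\prod_j(x-\alpha^{m-j}\beta^j)\in\mathbb Z[x]$), but it then extracts only a few coefficients ($L_2^m$, $L_3^m$, $L_4^m$, $(\alpha\beta)^m$), reduces everything to proving the relation $\zeta_2=\zeta_1^2$ among the attached roots of unity, and settles that by the Conway--Jones theorem on vanishing sums of roots of unity followed by a lengthy case analysis. Your observation that the induced permutation $\pi$ must respect the common ratio of the geometric progression---$\sigma(\lambda_{j+1}/\lambda_j)=\sigma(\beta/\alpha)$ is independent of $j$ while $\lambda_{\pi(j+1)}/\lambda_{\pi(j)}=(\beta/\alpha)^{\pi(j+1)-\pi(j)}$, and $\beta/\alpha$ is not a root of unity---forces $\pi$ to be affine, hence the identity or the reversal, and this one remark replaces the entire Conway--Jones computation. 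The hypotheses you flag (all $c_j\ne 0$, all $\lambda_j$ distinct, uniqueness of the power-sum representation over $L$) are exactly what is needed and all hold, and the Type I/Type II dichotomy, i.e.\ $\sigma(\alpha/\beta)\in\{\alpha/\beta,\beta/\alpha\}$ for every $\sigma$ up to $\mu_m$-twists, follows cleanly. This is the real content of the lemma and your route to it is sound.

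The endgame, however, does not deliver the dichotomy the lemma is meant to capture. Normalizing by $\gamma=\alpha+\beta$ forces $\alpha'+\beta'=1$, so your ``first branch'' ($\gamma^m\in\mathbb Q$ and $L_n(\alpha',\beta')\in\mathbb Q$) holds \emph{unconditionally}---even for a genuine Lehmer pair, where $\alpha'=\tfrac12+\tfrac12\sqrt{s/r}$ and $L_n(\alpha',\beta')$ is rational with unbounded denominators. That satisfies the disjunction as literally printed but erases the Lucas-versus-Lehmer distinction that the lemma is used for in the proof of Theorem~\ref{thm: main3}. The correct split comes straight out of your own dichotomy: if every $\sigma$ is Type~I then $\alpha/\beta\in\mathbb Q$ (first branch); if some $\sigma$ is Type~II then $\alpha/\beta$ is a quadratic irrational sent to its inverse by the nontrivial automorphism, hence has norm $1$, and Hilbert's Theorem~90 (as in the paper) puts it in the form $(\sqrt r+\sqrt s)/(\sqrt r-\sqrt s)$. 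Your stated trigger for the Lehmer alternative---``some Type~II element has $\zeta_\sigma=-1$''---is asserted rather than proved and is not the right invariant: a Type~II element with $\zeta_\sigma=1$ already makes $\alpha/\beta$ irrational (that is the ordinary conjugate-quadratic Lucas case, e.g.\ Fibonacci), and nothing in your argument manufactures a Type~II element with $\zeta_\sigma=-1$ in the quartic Lehmer situation, so the deduction that $m$ is even is unsupported as written. Finally, the passage from $\tfrac12(1\pm\sqrt{s/r})$ to $\sqrt r\pm\sqrt s$ is a rescaling by $2\sqrt r$, not by a rational. All of this is repairable by running your Type~I/II argument and then following the paper's Hilbert~90 step instead of dividing by $\alpha+\beta$.
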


In other words, $L_n^m$ is an exponential LDS times a power of a Lucas sequence or a power of a Lehmer sequence.

\begin{proof}  Now $L_n^m$ satisfies a linear recurrence with characteristic roots
\[
\alpha^i \beta^{m-i} \text{ for } 0\leq i\leq m,
\]
each occurring once. Therefore 
\[
\prod_{i=0}^m (x-\alpha^i \beta^{m-i})\in \Z[x] .
\]
We determine some of the coefficients explicitly: The coefficient of $-x^m$ is 
\[
\sum_i \alpha^i \beta^{m-i}= L_{m+1} \in \Z;
\]
the coefficient of $(-1)^{m+1}x^0$ is 
\[
\prod_i \alpha^i \beta^{m-i}= ( \alpha\beta)^{m(m+1)/2} \in \Z;
\]
and the  coefficient of $(-1)^{m}x$ is 
\[
\prod_i \alpha^i \beta^{m-i} \cdot \sum_j\frac 1{\alpha^j \beta^{m-j}} = ( \alpha\beta)^{m(m-1)/2}L_{m+1} \in \Z.
\]
since $(\alpha\beta)^m/(\alpha^j \beta^{m-j})=\alpha^{m-j} \beta^j$. We deduce that 
\[
\frac{L_{m+1}  \cdot ( \alpha\beta)^{m(m+1)/2} }{( \alpha\beta)^{m(m-1)/2}L_{m+1} } = ( \alpha\beta)^m\in \Q,
\]
and so is in $\Z$ as it is an algebraic integer.




Let $K$ be the splitting field extension of $\Q(\alpha,\beta)$ and $G:=\text{Gal}(K/\Q)$.
From the assumptions  and the above discussion we have 
\[
(\alpha+\beta)^m=L_2^m, (\alpha\beta)^m \text{ and } (\alpha^2+\alpha\beta+\beta^2)^m=L_3^m \in \Z
\]
so that if $\sigma \in G$ then
\[
\sigma (L_2)=\zeta_1L_2, \sigma(\alpha\beta)=\zeta_2(\alpha\beta), \text{ and } 
\sigma(L_3)=\zeta_3L_3
\]
where $\zeta_1,\zeta_2,\zeta_3$ are all $m$th roots of unity.

We will prove that $\zeta_2=\zeta_1^2$ so that
$(x-\sigma(\alpha))(x-\sigma(\beta))=(x-\zeta_1 \alpha)(x-\zeta_1 \beta)$.
Therefore $\sigma(\alpha/\beta)=\alpha/\beta$ or $\beta/\alpha$. If this holds for every $\sigma \in G$ then
$\alpha/\beta\in \mathbb Q$ or $ \mathbb Q(\sqrt{d})$ for some integer $d$.  If $\alpha/\beta\in \mathbb Q$ 
then we obtain the first case in the result. If $\alpha/\beta\in \mathbb  Q(\sqrt{d})$ then by Hilbert's Theorem 90  we can write
\[
\frac \alpha \beta = \frac{ u+v\sqrt{d} } { u-v\sqrt{d} } 
\]
for coprime $u,v \in \mathbb Z$ and $d$ a squarefree integer.   Now if $g=(u^2,d)$ then $g$ is squarefree so we can write $r=u^2/g$ and $s=v^2d/g$ and then 
\[
\frac \alpha \beta = \frac{   \sqrt{r}+\sqrt{s}}{\sqrt{r}-\sqrt{s}}
\]
with $(r,s)=1$. Writing   $\alpha=\gamma(\sqrt{r}+\sqrt{s})$ and  $\beta=\gamma(\sqrt{r}-\sqrt{s})$  then 
$L_n=\gamma^{n-1}L_n(\sqrt{r}+\sqrt{s},\sqrt{r}-\sqrt{s})$. 
Therefore $L_2= \gamma \sqrt{r}$ so $\gamma^m\in \Q$ if $m$ is even, and $\gamma^m\in \sqrt{r} \Q$ if $m$ is odd. The second case  follows.

We now assume that there is some $\sigma \in G$ such that  $\zeta_2\ne \zeta_1^2$ (and eventually establish a contradiction).
Now $L_3=L_2^2-\alpha\beta$, and so applying $\sigma$ to both sides
\[
\zeta_3(L_2^2-\alpha\beta)=\zeta_3L_3=\sigma(L_3)=\zeta_1^2L_2^2-\zeta_2(\alpha\beta).
\]
Now  $\zeta_2\ne \zeta_3$ (else  $L_2=0$ as $\zeta_2\ne \zeta_1^2$) and $\zeta_1^2\ne \zeta_3$ (else $\alpha\beta=0$), 
and so
\[
\rho:= \frac{ L_2^2}{  \alpha\beta} =    \frac{1-\zeta_3^{-1}\zeta_2}{1-\zeta_3^{-1}\zeta_1^2}.
\]
Since $\arg(1-e^{i\theta})\in (-\frac \pi 2,\frac \pi 2]$ for all $\theta$ and $\rho\ne 1$ we see that $\rho\not\in \mathbb R$.


Now $L_4=L_2^3-2(\alpha\beta)L_2$ and suppose $ \sigma(L_4)=\zeta_4L_4$. Then
 \[
 \zeta_4(L_2^3-2(\alpha\beta)L_2)=\sigma(L_4)=\zeta_1^3L_2^3-2\zeta_1\zeta_2(\alpha\beta)L_2
 \]
 Now  $ \zeta_1^3\ne \zeta_1\zeta_2$ as $\zeta_2\ne \zeta_1^2$, but then
 $\zeta_4\ne \zeta_1^3$ (else $\alpha\beta=0$) and $\zeta_4\ne\zeta_1\zeta_2$ (else $L_2=0$).
 Dividing by $L_2$, $(1-\zeta_4^{-1}\zeta_1^3) L_2^2 = 2 \alpha\beta( 1-\zeta_4^{-1}\zeta_1\zeta_2)$ and inserting the above expression for
 $L_2^2 / \alpha\beta$ yields
 \[
2  (1-\zeta_4^{-1}\zeta_1\zeta_2)(1-\zeta_3^{-1}\zeta_1^2)= (1-\zeta_4^{-1}\zeta_1^3)(1-\zeta_3^{-1}\zeta_2)
\]
which can be rewritten as
 \begin{equation} \label{eq:Zetas}
(1+\zeta_2\zeta_3^{-1}) (1+\zeta_1^3\zeta_4^{-1})   =
2 (\zeta_1^2\zeta_3^{-1}  +\zeta_1\zeta_2\zeta_4^{-1} ) ,
\end{equation}
Letting $\zeta_2\zeta_3^{-1}=e^{2ia}, \zeta_1^3\zeta_4^{-1}=e^{2ib}, \zeta_1^{-1}\zeta_2\zeta_3\zeta_4^{-1} =e^{2ic}$,
this becomes
\[
\cos (a+b) +     \cos (a-b) = 2    \cos   a  \cdot  \cos  b   = 2 \cos c.
\]
Conway and   Jones \cite[Theorem 7]{CJ} of J. H. Conway and A. J. Jones showed that some subsum of these cosines must vanish:
\begin{itemize}

\item If $\cos (a+b) +     \cos (a-b)=0$ then $\cos   a=0$  or  $\cos  b =0$, and $\cos  c =0$, which means both sides of \eqref{eq:Zetas} equal $0$. Therefore
$\zeta_3= -\zeta_2  $ with $\zeta_1\zeta_4  = \zeta_2^2 $, or   $ \zeta_4=- \zeta_1^3$ with $\zeta_1^4 =\zeta_2\zeta_3$.

\item If one cosine on the left-hand side  is $0$ then the equation becomes $0\pm 1=2\cdot (\pm \frac 12)$ by \cite[Theorem 7]{CJ}. Therefore \eqref{eq:Zetas} equals $(1+i)(1-i)  = 2(-\omega-\omega^2)$ or $(1+i)(1+i)   = 2(-i\omega-i\omega^2)$.
This yields $\zeta_2 = -i \omega^2 \zeta_1^2, \zeta_3  = -\omega^2 \zeta_1^2$, 
or  $\zeta_2=-\omega^2 \zeta_1^2 , \zeta_3=i \omega^2 \zeta_1^2$. 

\item If $\cos(a+b)=\cos(a-b)=\cos c$  then $\{ 2a,2b\} = \{ 0,\pm 2c\} $ though $2a\ne 0$ as $\zeta_2\ne\zeta_3$.
Therefore $ \zeta_4=\zeta_1^3$ and $\zeta_2= -\zeta_1^{2}$ or  $\zeta_3= - \zeta_1^2$.
 
\end{itemize}

These cases yield:
\[
  \rho:=\frac{ L_2^2}{  \alpha\beta} =    \frac{2\theta}{1+\theta},   1+\theta,  i\omega-\omega,   \frac{1-i}{1+i \omega},  \frac{1+\phi}{1-\phi},  \text{ or }  \frac{1+\theta}{2}
\]  
respectively, where $\theta=\zeta_2/ \zeta_1^2$ and $\phi=\zeta_1^2/\zeta_3$.  Now $\rho$ and $L_3/(\alpha\beta)=\rho-1\in \Q^{1/m}$ and so
\[
1+\zeta_k= 1+\theta,   1+\theta,       1+i \omega^2, 1+i \omega,   1-\phi,  \text{ or }   1+\theta \in \Q^{1/m} 
\]
respectively for some integer $m$ (as $2,  (i-1)^2,\omega^3  \in \Q$), and some $k$, where
 $\zeta_k$ is a primitive $k$th root of unity. Therefore
 $\zeta_k$  is a root of $(1+x)^m-a$ for some $a\in \mathbb Q$, so $\phi_k(x)$ divides $(1+x)^m-a$.
Therefore if $\phi_k(\zeta)=0$ then
 $(a^2)^{1/m} = |1+\zeta|^2= 2 + \zeta+\overline{\zeta}$ and so 
 $ \zeta+\overline{\zeta}$ is fixed, meaning it can take at most two possible values, so $\phi(k)\leq 2$ and $k=1,2,3,4$ or $6$.
  We can therefore rule out the third and fourth cases   and assume $\phi$ and $\theta$ are $k$th roots of unity for $k=3,4$ or $6$,
  so that $\rho\in \mathbb Q(i)$ or $\Q(\omega)$.

Now $\ell_n^m\in \mathbb Q$ where $\ell_{2n}:=L_{2n}/ L_2(\alpha\beta)^{n-1}$ and  $\ell_{2n+1}:=L_{2n+1}/(\alpha\beta)^n$, with
$\ell_3=  \rho-1, \ell_4= \rho-2, \ell_6/\ell_3= \rho-3$ and $\ell_8/\ell_4= \rho^2-4\rho+2$.  Hence
$\rho, \rho-1, \rho-2, \rho-3\in \Q^{1/m}$.   If $ a+bi\in \Q^{1/m}$ then $\frac{a+bi}{a-bi}= \frac{(a+bi)^2}{ a^2+b^2}$ is a   root of unity, and so
  $a=0, b=0$ and $|a|=|b|$. Similarly if $ a+b\omega\in \Q^{1/m}$ then $a=0, b=0, |a|=|b|, a=2b$ or $b=2a$.
  Therefore $\rho=2+\omega$ or $1-\omega$ since $\rho\not\in \Q$.  This implies that  
  $\rho^2-4\rho+2=\omega^2-2=-3-\omega$ or $\omega-2$ which do not belong to any $\Q^{1/m}$.
  \end{proof}

\section{Consequences of Vandermonde-type results}  \label{sec: 1st consequences}

In section \ref{sec: Vandermonde} we applied linear algebra directly to the sum in \eqref{eq: Formula}. Now we group together those $\alpha_i^n$ that are congruent modulo a given power of a prime ideal, $P^e$: 
Let  $\alpha_1^n,\dots ,\alpha_h^n$ be a maximal set of distinct residues of the $\alpha_i^n \pmod {P^e}$ (re-labelling the $\alpha_i$ if necessary) and then let $S_n(P^e)$  be the partition $I_1\cup \dots \cup I_h$ of $\{ 1,\dots,k\}$ given by
 \[
 I_j:=\{ i\in  \{ 1,\dots,k\}:\ \alpha_i^n\equiv \alpha_j^n \pmod {P^e}\} \text{ for } 1\leq j\leq h=h(P^e).
 \]
We see that  $S_n(P)\leq S_ n(P^2)\leq \dots$.\footnote{Here we defined 
a partial ordering on the partitions $S:= I_1\cup \dots \cup I_h$ of $\{ 1,\dots,k\}$ by refining the $I_j$ into smaller subsets. That is,  $S\leq T$ if $T=\cup\, I_{j,s}$
where each $I_j=I_{j,1}\cup \cdots \cup I_{j,\ell_j}$. }  Moreover $P^e$ divides $ \mathcal G_n(S_ n(P^e))$ where
 \[
   \mathcal G_n(S) := \gcd_{\substack{ i\in I_j \\ 1\leq j\leq h}} ( (\alpha_i/\alpha_j)^n-1)^*.
 \]
 Let $\mathcal I^\dag$ be the set of those partitions $S= I_1\cup \dots \cup I_h$ of $\{ 1,\dots,k\}$ for which $g_{I_j}(t)=0$ for every  $I_j\in S$.
 
Given  $g(t)=\sum_j g_jt^j\in K[t]$, let $R_g=\prod_{ j: g_j\ne 0} g_j$ (with $R_0=1$) and 
 \[
 R = k! \cdot c_k \cdot  \prod _{I\subset \{ 1,\dots, k\}  }  R_{g_I}.
 \]
 
  \begin{lemma} \label{lem: 1stPartition} Let $(u_n)_{n\geq 0}$ be a linear recurrence sequence with $u_0=0, u_1=1$.
If  $P$ is a prime ideal with  $P\nmid Rn$ and  $P$ divides $(u_n,\dots,u_{(k-1)n})$ then $S_n(P)\in \mathcal I^\dag$.
\end{lemma}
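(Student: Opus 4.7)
The plan is to convert the hypothesis $P \mid (u_n,\dots,u_{(k-1)n})$ into a homogeneous linear system in the coefficients of the polynomials $g_{I_j}$, whose coefficient matrix is a generalized Vandermonde in the residues $\beta_j := \alpha_j^n \pmod P$. Corollary \ref{cor: Vand1} will tell me this determinant is a unit mod $P$, so the system forces each $g_{I_j}$ to vanish mod $P$; then the condition $P \nmid R$ upgrades this to genuine vanishing.

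For the reduction, fix the partition $S_n(P) = I_1 \cup \cdots \cup I_h$. By construction $\alpha_i^n \equiv \alpha_j^n \pmod P$ whenever $i \in I_j$, so $\alpha_i^{rn} \equiv \beta_j^r \pmod P$ for every $r \geq 0$. Using \eqref{eq: Formula}, I get
\[
u_{rn} \;=\; \sum_{j=1}^h \sum_{i \in I_j} g_i(rn)\alpha_i^{rn} \;\equiv\; \sum_{j=1}^h g_{I_j}(rn)\,\beta_j^r \pmod P.
\]
Using $u_0 = 0$ together with the hypothesis, the left side vanishes mod $P$ for every $r \in \{0,1,\dots,k-1\}$. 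Writing $g_{I_j}(t) = \sum_{\ell=0}^{D_j} c_{j,\ell} t^\ell$ with $D_j \leq \max_{i \in I_j}(k_i-1)$, so that $k' := \sum_j(D_j+1) \leq \sum_i k_i = k$, I restrict attention to the first $k'$ rows, obtaining the square system
\[
\sum_{j=1}^h \sum_{\ell=0}^{D_j} \bigl(c_{j,\ell}\, n^\ell \ell!\bigr) \cdot \frac{r^\ell \beta_j^r}{\ell!} \;\equiv\; 0 \pmod P \qquad (0 \leq r \leq k'-1).
\]

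The coefficient matrix here is precisely $M_{D_1+1,\dots,D_h+1}(\beta_1,\dots,\beta_h)$ from Corollary \ref{cor: Vand1}, with determinant $\pm \prod_j \beta_j^{\binom{D_j+1}{2}} \prod_{i<j}(\beta_i-\beta_j)^{(D_i+1)(D_j+1)}$. The $\beta_j$ are distinct mod $P$ by the very definition of the partition $S_n(P)$; each $\beta_j \not\equiv 0 \pmod P$ because $P \nmid c_k$ forces $P \nmid \prod_i \alpha_i$; and $P \nmid k!$ handles the factorials $\ell!$ (since $\ell \leq k-1$). So the determinant is a unit mod $P$, yielding $c_{j,\ell}\, n^\ell \ell! \equiv 0 \pmod P$, and then $P \nmid n,\ P \nmid k!$ give $c_{j,\ell} \equiv 0 \pmod P$ for every $j,\ell$. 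Finally, $P \nmid R_{g_{I_j}}$ says every nonzero coefficient of $g_{I_j}$ is coprime to $P$, which combined with the previous congruence forces $c_{j,\ell} = 0$ identically, i.e., $g_{I_j}(t) \equiv 0$, so $S_n(P) \in \mathcal I^\dag$.

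The main bookkeeping obstacle will be matching the multiplicities appearing in Corollary \ref{cor: Vand1} to the true degrees of the $g_{I_j}$: these are bounded by $\max_{i \in I_j}(k_i-1)$ rather than by $\sum_{i \in I_j}(k_i-1)$, so I work with a proper sub-block of the ambient Vandermonde matrix using the smaller multiplicities $D_j+1 \leq k_j$. One just needs $\sum_j(D_j+1) \leq k$ to ensure enough equations, which is automatic, and blocks with $g_{I_j} \equiv 0$ (which already satisfy the $\mathcal I^\dag$-condition) can be dropped from the system without affecting the remaining Vandermonde structure.
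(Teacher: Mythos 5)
Your proposal is correct and follows essentially the same route as the paper: reduce mod $P$ to a square linear system in the coefficients of the $g_{I_j}$, invoke Corollary \ref{cor: Vand1} to see the generalized Vandermonde determinant is a $P$-adic unit (the $\beta_j$ being distinct mod $P$ by the definition of $S_n(P)$, nonzero since $P\nmid c_k$, and the factorials controlled by $P\nmid k!$), and then use $P\nmid R_{g_{I_j}}n$ to upgrade vanishing mod $P$ to identical vanishing. Your explicit handling of blocks with $g_{I_j}\equiv 0$ and of the distinctness of the $\alpha_j^n$ mod $P$ is if anything slightly more careful than the paper's write-up, but the argument is the same.
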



\begin{proof}  
Let  $S_n(P)=I_1\cup \dots \cup I_h$.  
If $r\geq 0$ then, as $j\in I_j$,
\[
u_{rn}=\sum_{i=1}^m g_i(rn)  \alpha_i^{rn} = \sum_{j=1}^h \sum_{i\in I_j} g_i(rn)  \alpha_i^{rn}
\equiv  \sum_{j=1}^h \sum_{i\in I_j} g_i(rn)     \alpha_j^{rn} =  \sum_{j=1}^h  g_{I_j}(rn)     \alpha_j^{rn}  \pmod P.
\] 
Let $\kappa_j=\deg g_{I_j}+1$ and $\kappa(P)=\kappa=\sum_{j=1}^h \kappa_j \leq k$.
Taking $r=0,1,.\dots,\kappa-1$ we have a $\kappa$-by-$\kappa$ system of linear equations. We know the determinant of the corresponding matrix by  Corollary \ref{cor: Vand1}; its prime factors    divide
either $j!$ for some $j\le \kappa\leq k$, or some $\alpha_j$ and so $c_k$, or some difference $\alpha_i^{n} -\alpha_j^{n}$
with $1\leq i<j\leq h$, and so $P$ does not divide the determinant as $P\nmid R$.

Therefore $P$ divides all of the coefficients of all of the $g_{I_j}(nx)$, that is $P|R_{g_{I_j}}n$ if some $g_{I_j}\ne 0$, which contradicts $P\nmid Rn$. Hence each $g_{I_j}= 0$.
 \end{proof}

We need to revisit this argument but with prime powers, which is more complicated:

 \begin{prop}  \label{prop: Primepowers}
Let $(u_n)_{n\geq 0}$ be a linear recurrence sequence with $u_0=0, u_1=1$.
If  $P$ is a prime ideal with  $P\nmid Rn$ and   $P^v$ divides $(u_n,\dots,u_{(k-1)n})$ with $v \geq 1$ and $n>k$ then
there exists an integer $e\geq v/k!^2$ such that $S_n(P^e)\in \mathcal I^\dag$. 
\end{prop}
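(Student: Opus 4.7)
The plan is to push the Vandermonde-plus-Cramer argument from the proof of Lemma~\ref{lem: 1stPartition} from $P$ up to arbitrary prime powers $P^e$. Fix $e\geq 1$ and write $S_n(P^e)=I_1\cup\cdots\cup I_h$ with chosen block representatives $\alpha_1,\dots,\alpha_h$. Since $\alpha_i^n\equiv\alpha_j^n\pmod{P^e}$ whenever $i\in I_j$, we have $\alpha_i^{rn}\equiv\alpha_j^{rn}\pmod{P^e}$ for all $r\geq 0$, so that
\[
u_{rn}\;\equiv\;\sum_{j=1}^{h}g_{I_j}(rn)\,\alpha_j^{rn}\pmod{P^e}.
\]
Combined with $P^v\mid u_{rn}$ for $0\leq r\leq k-1$, and setting $\kappa_j:=\deg g_{I_j}+1$ and $\kappa:=\sum_j\kappa_j\leq k$, the first $\kappa$ of these relations form a homogeneous $\kappa\times\kappa$ linear system in the coefficients of the $g_{I_j}$, valid modulo $P^{\min(v,e)}$. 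By Corollary~\ref{cor: Vand1}, and because the hypothesis $P\nmid Rn$ makes $n$, $k!$ and every $\alpha_j$ a unit at $P$, the determinant of the coefficient matrix has $P$-adic valuation exactly $D(e):=\sum_{j<j'}\kappa_j\kappa_{j'}\,v_P(\alpha_j^n-\alpha_{j'}^n)$. Cramer's rule then forces every coefficient of every $g_{I_j}$ to have $P$-adic valuation at least $\min(v,e)-D(e)$; any nonzero such coefficient divides $R_{g_{I_j}}\mid R$ and is therefore a $P$-unit, yielding the implication: $\min(v,e)>D(e)$ implies $S_n(P^e)\in\mathcal I^\dag$.

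It remains to exhibit some $e\geq v/(k!)^2$ satisfying this criterion, which I would do iteratively. Starting from $S_n(P)\in\mathcal I^\dag$ (Lemma~\ref{lem: 1stPartition}), observe that as $e$ grows from $1$ to $\infty$ the partition $S_n(P^e)$ strictly refines at most $k-1$ times, at transition points $1=e_0<e_1<\cdots<e_t$ with $t\leq k-1$. Between consecutive transitions the partition — hence $D(e)$ — is constant while $\min(v,e)$ grows, so the criterion only gets easier; at a transition $e_i$, some representative pair $(l,l')$ with $v_P(\alpha_l^n-\alpha_{l'}^n)=e_i-1$ is split off from a common block, contributing a new term $\kappa_l\kappa_{l'}(e_i-1)$ to $D$, together with possible rearrangements of $\kappa_j\kappa_{j'}$ weights caused by the split of degrees. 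Using the estimate $\sum_{j<j'}\kappa_j\kappa_{j'}\leq\binom{k}{2}$ at each transition and telescoping the accumulated growth across the $t\leq k-1$ refinements yields a bound of the shape $D(e_t)\leq(k!)^2 \cdot e_t$ in the worst case, from which the largest transition $e_i$ satisfying the Cramer criterion is at least $v/(k!)^2$.

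The main obstacle lies in this combinatorial bookkeeping: when a block $J$ splits into $J_1\cup J_2$, the sub-degrees $\kappa_{J_1},\kappa_{J_2}$ need not sum to $\kappa_J$ (since cancellation among the $g_i$'s can reduce $\deg g_J$), and the redistribution of Vandermonde weights across the resulting new representative pairs requires a delicate estimate to ensure that the accumulated loss after all $t$ transitions is controlled by $(k!)^2$. A clean argument likely proceeds by induction on $h$, exploiting the ultrametric identity $v_P(\alpha_i^n-\alpha_{i'}^n)=v_P(\alpha_j^n-\alpha_{j'}^n)$ whenever $\alpha_i^n\equiv\alpha_j^n\pmod{P^{e_i}}$, which lets one re-express the old determinant's contributions in terms of the refined partition with only a factorial-type loss per transition.
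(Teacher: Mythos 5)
Your first paragraph is sound and is essentially the paper's own set-up: reduce $u_{rn}$ modulo $P^e$ using the blocks of $S_n(P^e)$, apply Corollary \ref{cor: Vand1} and Cramer's rule to the resulting $\kappa\times\kappa$ system, and use $P\nmid Rn$ to conclude that if some $g_{I_j}\neq 0$ then $P^{\min(v,e)}$ must divide the Vandermonde-type determinant, whose valuation is $D(e)=\sum_{j<j'}\kappa_j\kappa_{j'}v_P(\alpha_j^n-\alpha_{j'}^n)$. The gap is in the second half, where you must actually exhibit an $e\geq v/k!^2$. Your plan --- ascend through the transition points $e_0<e_1<\cdots<e_t$, prove a bound ``of the shape $D(e_t)\leq(k!)^2e_t$'', and conclude that the largest transition satisfying the Cramer criterion is at least $v/(k!)^2$ --- does not close. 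First, a bound $D(e)\leq(k!)^2e$ is useless for the criterion, which requires $D(e)<e$ when $e\leq v$; an upper bound of the form $C\cdot e$ with $C>1$ implies nothing about where $\min(v,e)>D(e)$ holds. Second, and more fundamentally, nothing in your ascent uses the hypothesis $P^v\mid(u_n,\dots,u_{(k-1)n})$ to \emph{locate} the transition points: you have no lower bound on where they sit, so you cannot exclude the scenario in which every candidate $e$ lies below $v/k!^2$. You correctly flag the bookkeeping of the $\kappa_j$'s under block-splitting as an obstacle, but the missing idea is the anchor at the top level.

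The paper avoids all of this by running a descent from $e=v$. Set $v_0=v$. If $S_n(P^{v_0})\notin\mathcal I^\dag$, your own criterion in contrapositive gives $v_0\leq D(v_0)\leq\tfrac{\kappa_0^2}{2}\max_{j<j'}v_P(\alpha_j^n-\alpha_{j'}^n)$, so by pigeonhole some representative pair has valuation $v_1\geq 2v_0/\kappa_0^2$; at level $v_1$ that pair merges, so $h$ and $\kappa$ each drop by at least one, and one repeats with $v_1$ in place of $v_0$. Since $\kappa$ can drop at most $k-1$ times (and with a single block the determinant is a $P$-unit, so membership in $\mathcal I^\dag$ is forced), the process terminates at some $v_m\geq v\prod_{i=0}^{m-1}\bigl(2/(k-i)^2\bigr)\geq v/k!^2$ with $S_n(P^{v_m})\in\mathcal I^\dag$. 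No tracking of how $D$ accumulates across refinements is needed: the divisibility hypothesis is used once per step, at the current level, to manufacture the next strictly coarser level. To salvage your ascent you would have to reproduce exactly this pigeonhole, at which point you are running the descent backwards.
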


\begin{proof} We proceed as in the proof of  Lemma \ref{lem: 1stPartition} but this time with $S_n(P^v)$. If some $g_{I}(t)\ne 0$
then its non-zero coefficients are not divisible by $P$ (as $P\nmid R$) and so $P^v$ divides the determinant. Ruling out certain terms as before we deduce that  
\[
P^v \text{ divides } \prod_{1\leq i< j\leq h(P^v)} (\alpha_i^n-\alpha_j^n)^{\kappa_i\kappa_j} .
\]
Therefore, since  $2\sum_{i<j} \kappa_i\kappa_j\leq  (\sum_i \kappa_i)^2=\kappa^2$,
 there exists   $1\leq i<j\leq m$ with 
 \[
 P^{v_1} \text{ divides }  \alpha_i^n-\alpha_j^n \text{ for some } v_1\geq 2v/\kappa(P^v)^2.
 \]
 and so $h(P^{v_1})\leq h(P^v)-1$ and  $\kappa(P^{v_1})\leq \kappa(P^v)-1$.  If some $g_{I}(t)\ne 0$ for the partition $S_n(P^{v_1})$ then we repeat this same argument with $v_1$ in place of $v$, etc. Suppose this process terminates after $m$ steps (in that  $g_{I}(t)=0$ for every part $I$ of the partition $S_n(P^{v_m})$) then $m\leq k-1$ since  $\kappa(P^{v_m})\leq \kappa(P^v)-m\leq k-m$, and so
 \[
 e=v_m \geq  v \cdot \prod_{i=0}^{m-1} (2/\kappa(P^{v_i})^2 \geq v \cdot \prod_{i=0}^{m-1} (2/(k-i))^2 \geq v/k!^2. \qedhere
 \]
 \end{proof}
 
 Let $\rho_k=1/(2 k!^2\# \{ \text{Partitions of }  \{ 1,\dots,k\} \}$.

 \begin{corollary}  \label{cor: Whattodo}
Let $(u_n)_{n\geq 0}$ be a linear recurrence sequence with $u_0=0, u_1=1$, which is not divisible by a non-trivial exponential LDS. Fix $\kappa>0$ and $\epsilon>0$ significantly smaller, and $y$ large. Suppose that $n$ is chosen so that $ \sum_{p|n,   p>y} \frac 1p<\epsilon$ and
$(u_n,\dots,u_{(k-1)n})>e^{\kappa n}$. Then there exists a partition $S \in \mathcal I^\dag$ for which
\[
\mathcal G_n(S)  \geq e^{\rho_k \kappa n}
\]
 \end{corollary}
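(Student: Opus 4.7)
The plan is to decompose $G := \gcd(u_n, \dots, u_{(k-1)n})$ prime-by-prime, apply Proposition \ref{prop: Primepowers} to every prime not dividing $Rn$, and then pigeonhole over the finitely many partitions of $\{1,\dots,k\}$. Let $T$ denote this number of partitions, so that $\rho_k = 1/(2 k!^2 T)$. For every prime ideal $P$ of $K$ with $P \nmid Rn$ and $v_P(G) \geq 1$, Proposition \ref{prop: Primepowers} supplies an integer $e_P \geq v_P(G)/k!^2$ together with a partition $S_P := S_n(P^{e_P}) \in \mathcal I^\dag$ such that $P^{e_P}$ divides $\mathcal G_n(S_P)$. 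Assigning to each such $P$ its partition $S_P$ and applying pigeonhole yields a fixed $S \in \mathcal I^\dag$ with
\[
\sum_{P :\, S_P = S} e_P \log N(P) \;\geq\; \frac{1}{T\, k!^2} \sum_{P \nmid Rn} v_P(G) \log N(P).
\]
Since the prime ideals on the left are pairwise coprime and each $P^{e_P}$ divides $\mathcal G_n(S)$, the left side is at most $\log N(\mathcal G_n(S))$.

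The remaining task is to control the ``bad'' contribution $B := \sum_{P \mid Rn} v_P(G) \log N(P)$ so that it is at most half of $\log N(G) \geq [K:\mathbb Q]\kappa n$. For prime ideals $P \mid R$, this is a finite set depending only on $(u_n)$, and the hypothesis that $(u_n)$ is not divisible by a non-trivial exponential LDS forces $v_P(u_n)/n \to 0$, so their joint contribution is $o(n)$. For prime ideals $P$ above a rational $p \mid n$ with $p \nmid R$, I would invoke Proposition \ref{pr: coolio} to majorise $u_n$ by a product of factors $n \alpha_j^{n-1}$ and $(\alpha_i^n - \alpha_j^n)/(\alpha_i - \alpha_j)$; combined with Proposition \ref{prop: P-div} applied to each ratio $\alpha_i/\alpha_j$, this yields a uniform bound $v_P(u_n) \log N(P) \ll \log n + n/p$. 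Summing and using the hypothesis $\sum_{p \mid n,\, p > y} 1/p < \epsilon$ gives
\[
\sum_{\substack{p \mid n \\ p > y}} \sum_{P \mid p} v_P(u_n) \log N(P) \;\ll\; \log^2 n + \epsilon n,
\]
while the contribution from the finitely many $p \leq y$ not dividing $R$ is once more $o(n)$ by the no-exponential-LDS assumption.

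Combining, $B = o(n) + O(\epsilon n) < \tfrac12 [K:\mathbb Q]\,\kappa n$ once $\epsilon$ is small, $y$ is large, and $n$ is sufficiently large; plugging this back gives $\log N(\mathcal G_n(S)) \geq [K:\mathbb Q]\,\rho_k \kappa n$, from which $\mathcal G_n(S) \geq e^{\rho_k \kappa n}$ follows. The main obstacle is the per-prime upper bound on $v_P(u_n)$ when $p \mid n$: the strategy is to pass from the general recurrence $(u_n)$ to simpler binomials $(\alpha_i^n - \alpha_j^n)/(\alpha_i - \alpha_j)$ via Proposition \ref{pr: coolio} and then apply the $p$-adic lifting-the-exponent estimate of Proposition \ref{prop: P-div} to each binomial. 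The hypothesis $\sum_{p \mid n,\, p > y} 1/p < \epsilon$ is exactly the hook that converts the per-prime ``$n/p$'' bound into the $O(\epsilon n)$ tail needed to preserve the $\kappa n$-sized main term, while the no-exponential-LDS hypothesis is what keeps the small-prime contribution $o(n)$.
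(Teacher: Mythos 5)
Your proposal is correct and follows essentially the same route as the paper: bound the contribution of the prime ideals dividing $Rn$ via Proposition \ref{pr: coolio}, Lemma \ref{PolyGenDivByp} and the hypothesis $\sum_{p\mid n,\,p>y}1/p<\epsilon$, apply Proposition \ref{prop: Primepowers} to the remaining primes, and pigeonhole over the finitely many partitions in $\mathcal I^\dag$. The one spot where you are looser than the paper is the primes $P\mid R$ that divide some $\alpha_i$: the paper justifies ``no exponential LDS divisor $\Rightarrow$ $v_P(u_n)$ is polynomially bounded'' by passing to $x_n=\sum_{i:\,P\nmid\alpha_i}g_i(n)\alpha_i^n$, which is nonempty and congruent to $u_n$ modulo a high power of $P$, whereas you assert the conclusion $v_P(u_n)/n\to 0$ directly from the hypothesis.
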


\begin{proof} Let  $P$ be a prime ideal and   $P^{v_P}\| (u_n,\dots,u_{(k-1)n})$,
so that $P^{v_P}$ divides $n^{O(1)} c_k^{O(n)}$ times a product of terms
 $\alpha_i^n-\alpha_j^n$  by Proposition \ref{pr: coolio} . Lemma \ref{PolyGenDivByp} (a) then implies that 
 \[
 \prod_{P|R, \ P\nmid c_k n} P^{v_P} \ll_R n^{O_R(1)}.
 \]
 If $P$ divides $c_k$ (so that $P$ divides some $\alpha_i$) but not $n$ then we can apply the same argument to
 $x_n=\sum_{i:\ P\nmid \alpha_i} g_i(n)\alpha_i^n$; this sum is non-empty since $u_n$ is not divisible by an exponential LDS and therefore the non-canceled out $\alpha_i$ cannot have a common prime ideal factor.
 Therefore $P^{v_P(x_n)} \ll_P  n^{O_P(1)}$ and since
 $x_n\equiv u_n \pmod {P^{n-O(1)}}$ this implies that $P^{v_P(u_n)} \ll_P  n^{O_P(1)}$.
 Finally if $P|n$ then by Proposition \ref{pr: coolio} applied to $x_n$ we have, for arbitrarily selected $y$,
 \[
  \prod_{P| n} P^{v_P} \ll_y n^{O_y(1)}\exp\bigg( O \bigg( n\, \sum_{p|n,   p>y} \frac 1p \bigg)\bigg) \ll_y n^{O_y(1)} e^{O(\epsilon n)}
 \]
 by   Lemma \ref{PolyGenDivByp}(b) for $p>y$, and Lemma \ref{PolyGenDivByp}(a) for $p\leq y$.
 
 We combine the above bounds to obtain
 \[
 w_n:=\prod_{\substack{P^{v_P}\| (u_n,\dots,u_{(k-1)n}) \\ P\nmid Rn}} P^{v_P} =
 \frac{(u_n,\dots,u_{(k-1)n}) } {  \prod_{P|R   n} P^{v_P} }  \gg e^{\frac \kappa 2 n}.
 \]
 
 Suppose that $P\nmid Rn$.  Since $P^e$ divides $ \mathcal G_n(S_ n(P^e))$ by definition, 
  Proposition \ref{prop: Primepowers} implies that   $P^{v_P}$ divides $ \mathcal G_n(S_ n(P^e))^K$ where $K=k!^2$
  for some $e\geq v_P/k!^2$ with $S_n(P^e)\in \mathcal I^\dag$, and so
\[
w_n \text{ divides } \bigg( \prod_{S \in \mathcal I^\dag }  \mathcal G_n(S) \bigg)^K.
\]
Combining the last two inequalities for $w_n$ we deduce  the result.
  \end{proof}

 \section{Corvaja-Zannier}

The key to the next part of our work is  the  beautiful Corollary 1 of Corvaja and Zannier \cite{CZ}, inspired by their earlier work with Bugeaud \cite{BCZ} (and see \cite{HL}), resting on the Subspace Theorem of Schmidt and Schlickewei, which can be written as follows:
If the numerators and denominators of algebraic numbers $u$ and $v$ have prime ideal factors only from some given finite set $S$ (so that $u$ and $v$ are \emph{$S$-units})  but are multiplicatively independent then
\begin{equation} \label{eq: CZ-ineq}
|N((u-1,v-1))| \ll_{S,\epsilon} \max\{ H(u), H(v)\}^\epsilon,
\end{equation}
where the \emph{height} $H(u):=\prod_\nu \max\{ 1,|u|_\nu\}$.\footnote{Here $\nu$ runs over the set of places of $K$, and 
$|\cdot |_\nu$ is the corresponding absolute value, normalized so that the product formula holds in $K$. The precise formulation is not so important as the difference between any sensible variants are made irrelevant by the $\epsilon$-power.}
This implies the following:

\begin{prop} \label{prop: CZ}
Let $u,v\in K$ be multiplicatively independent algebraic numbers.    For any given $\epsilon>0$, if $n$ is sufficiently large then
\[
|N((u^n-1,v^n-1))| < e^{\epsilon n}.
\]
Here ``sufficiently large'' depends on $u,v$ and $\epsilon$.
\end{prop}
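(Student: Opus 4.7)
The plan is to apply the Corvaja--Zannier bound \eqref{eq: CZ-ineq} directly with $u^n, v^n$ in place of $u, v$. The crucial observation is that the set $S$ of ``bad'' places controlling the $S$-unit property can be chosen independently of $n$, while the heights grow only geometrically in $n$, so the $\epsilon$-power on the right-hand side of \eqref{eq: CZ-ineq} can be taken small enough to yield an arbitrarily slowly growing exponential bound.

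First I would fix $S$ to be the finite set of places of $K$ comprising all archimedean places together with all finite places at which either $u$ or $v$ has nonzero valuation; since integer powers introduce no new places of support, $u^n$ and $v^n$ are $S$-units for every $n \geq 1$. Multiplicative independence is preserved under powers: any relation $(u^n)^a = (v^n)^b$ with $(a,b) \neq (0,0)$ gives $u^{na} = v^{nb}$, contradicting the hypothesis on $u$ and $v$. Using the functoriality of the absolute (Weil) height, $H(u^n) = H(u)^n$, and setting $B := \log \max\{H(u), H(v)\}$, we have $\max\{H(u^n), H(v^n)\} = e^{Bn}$; note $B > 0$, since $u, v$ are not both roots of unity (else they would be multiplicatively dependent).

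Given $\epsilon > 0$, I would set $\delta := \epsilon/(2B)$ and apply \eqref{eq: CZ-ineq} with this $\delta$ to obtain
\[
|N((u^n - 1, v^n - 1))| \leq C(S, \delta) \cdot e^{B\delta n} = C(S, \delta) \cdot e^{(\epsilon/2) n},
\]
where $C(S,\delta)$ is a constant depending only on $u$, $v$, $\epsilon$, and not on $n$. For $n$ sufficiently large the prefactor is absorbed and the right-hand side falls below $e^{\epsilon n}$, as required. The deduction is essentially a one-line application once \eqref{eq: CZ-ineq} is granted, so there is no substantive obstacle at this level; the only point requiring care is that $S$ is chosen uniformly in $n$ so that the implicit constant in \eqref{eq: CZ-ineq} does not blow up as $n$ grows. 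All the depth is packed into the Schmidt--Schlickewei subspace theorem underlying \eqref{eq: CZ-ineq}.
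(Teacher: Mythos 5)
Your proof is correct and is exactly the deduction the paper intends: the paper gives no separate argument for Proposition \ref{prop: CZ} beyond the phrase ``This implies the following'' after stating \eqref{eq: CZ-ineq}, and your application with $u^n,v^n$ as $S$-units for a fixed $S$, together with $H(u^n)=H(u)^n$ and the preservation of multiplicative independence under powers, is the intended one-line derivation.
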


\subsection{Multiplicative dependence}  \label{sec: MultDep}
 
\begin{lemma} \label{lem: premultdep} 
If $u_1,\dots,u_k$ are multiplicatively dependent algebraic numbers in $K$ then
there  exists an algebraic number $w\in K$,  and   roots of unity $\zeta_1=1,\dots,\zeta_k$ such that 
\[
u_i = \zeta_i w^{r_i} \text{ for each  }  i, 1\leq i\leq k \text{ where }  \gcd_i r_i=1.
\]
\end{lemma}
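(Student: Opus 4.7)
The plan is to analyze the finitely generated multiplicative subgroup $G = \langle u_1,\dots,u_k\rangle \leq K^*$ and reduce the statement to a rank-one calculation modulo roots of unity.

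First let $\mu = \mu_K$ denote the finite cyclic group of roots of unity in $K$. An element of $K^*$ whose power is a root of unity is itself a root of unity, so $K^*/\mu$ is torsion-free. Consequently $G/(G \cap \mu)$ embeds into $K^*/\mu$ as a finitely generated torsion-free abelian group, and is therefore free abelian of some finite rank $r$.

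Next I would use the pairwise multiplicative dependence hypothesis to force $r \leq 1$. If every $u_i$ is a root of unity then $r = 0$ and the conclusion is immediate with $w = u_1$, $r_1 = 1$, and $r_i = 0$ for $i > 1$. Otherwise pick some $u_{i_0}$ with $\bar u_{i_0} \neq 0$ in $G/\mu$. For each $j$, the strict relation $u_{i_0}^a = u_j^b$ with $a,b$ nonzero integers forces $\bar u_j$ to lie in the one-dimensional $\mathbb Q$-subspace $\mathbb Q \bar u_{i_0}$ inside $(G/\mu)\otimes \mathbb Q$. Since the $\bar u_i$ generate $G/(G\cap \mu)$, this group has rank exactly one.

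I would then pick a generator $\bar v$ of the infinite cyclic group $G/(G\cap \mu)$ and lift it to $v \in K^*$. Each $u_i$ can now be written as $u_i = \eta_i v^{r_i}$ with $\eta_i \in \mu$ and $r_i \in \mathbb Z$, and the fact that the $\bar u_i$ generate the cyclic quotient forces $\gcd_i r_i = 1$.

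The final step is the normalization $\zeta_1 = 1$, which is the main delicate point. Using B\'ezout pick integers $c_1,\dots,c_k$ with $\sum_i c_i r_i = 1$ and set $w := \prod_i u_i^{c_i} = \bigl(\prod_i \eta_i^{c_i}\bigr) v$; this $w$ is still a lift of the generator $\bar v$, and substituting back expresses every $u_i$ as $\zeta_i w^{r_i}$ with $\zeta_i \in \mu$. By exploiting the remaining freedom in the choice of $(c_i)$ (the kernel of $(c_i)\mapsto \sum c_i r_i$ has rank $k-1$), and after a suitable reindexing so that the first entry $r_1$ is primitive with respect to the order of $\mu$, one can arrange $\zeta_1 = 1$. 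The rest of the argument is essentially bookkeeping on the structure theorem for finitely generated abelian groups.
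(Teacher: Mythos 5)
Your argument for the main content of the lemma is correct and takes a genuinely different route from the paper's. The paper proves the $k=2$ case by hand (a minimal relation $u^S=v^R$, B\'ezout applied to $r=R/g$ and $s=S/g$, and $w=u^mv^\ell$) and then inducts on $k$, absorbing one new $u_{k+1}$ at a time; you instead pass to the finitely generated group $G=\langle u_1,\dots,u_k\rangle$, use that $K^*/\mu_K$ is torsion-free to see that $G/(G\cap\mu_K)$ is free abelian, and use pairwise dependence to force its rank to be at most one. (Both proofs read the hypothesis as \emph{pairwise} multiplicative dependence, which is clearly what is intended.) Your route is cleaner and makes the condition $\gcd_i r_i=1$ transparent --- the $r_i$ must generate $\mathbb Z$ --- while the paper's induction buys an explicit recipe for $w$ that is reused verbatim in the proof of Corollary \ref{Cor: multfep}.

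The gap is in your final normalization. Replacing one B\'ezout vector $(c_i)$ by another only multiplies $w=\prod_i u_i^{c_i}$ by an element of the finite group $\langle \eta_1,\dots,\eta_k\rangle\subseteq\mu_K$, so to force $\zeta_1=1$ you need $\eta_1$ to be an $r_1$-th power in $\mu_K$ up to that ambiguity; and no reindexing guarantees an index with $\gcd(r_i,|\mu_K|)=1$, since $\gcd_i r_i=1$ is compatible with every single $r_i$ sharing a factor with $|\mu_K|$. Concretely, in $K=\mathbb Q(\zeta_6)$ take $u_1=4\zeta_6$ and $u_2=8\zeta_6$: these are multiplicatively dependent, any admissible $w\in K$ satisfies $|w|^{|r_1|}=4$, $|w|^{|r_2|}=8$, forcing $(|r_1|,|r_2|)=(2,3)$ and $w=2^{\pm1}\zeta_6^{c}$, whence $\zeta_1=\zeta_6^{1\mp 2c}$ and $\zeta_2=\zeta_6^{1\mp 3c}$, neither of which is ever $1$. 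So with $w\in K$ the normalization $\zeta_1=1$ is genuinely unattainable, even after permuting the $u_i$. You are in good company: the paper's own proof hits the same wall and resolves it by multiplying $w$ by a root of unity $\zeta_{ur}$ with $\zeta_{ur}^r=\zeta_u$, which need not lie in $K$ --- i.e., it silently trades ``$w\in K$'' for ``$\zeta_1=1$''. If you allow $w$ to lie in a finite extension of $K$ (which is all the paper's applications require), your argument closes in one line by adjoining an $r_1$-th root of $\eta_1$; if you insist on $w\in K$, the clause $\zeta_1=1$ must simply be dropped.
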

\begin{proof} By induction on $k$. For $k=2$ the numbers $u=u_1$ and $v=u_2$ are multiplicatively dependent so there exist non-zero integers $R,S$ with
 $u^S=v^R$. We select $R,S$ to be the minimal integers with this property.
 Let $g=(R,S)$ then $R=rg, S=sg$ so that $(r,s)=1$, and 
 select  integers $m,\ell$ so that $rm+s\ell=1$. Let $w=u^mv^\ell$ so that 
 \[
w^R =u^{mR}(v^{R})^\ell=u^{mR}(u^S)^\ell=u^g \text{ and similarly } w^S=v^g.
 \]
 Taking $g$th roots we deduce that  $u = \zeta_u w^r$ and $v = \zeta_v w^s$ where $\zeta_u^g=\zeta_v^g=1$.
 We can multiply $w$ by $\zeta_{ur}$  where $\zeta_{ur}^r= \zeta_u$ so that $u=w^r$.
 
Suppose the result is true for $k$, so  there exists $v\in K$ with $u_i=\zeta_i v^{R_i}$ for $1\leq i\leq k$ with $\gcd_i R_i=1$. Now $u=u_{k+1}$ and $v$ are multiplicatively dependent, by hypothesis, so the result for $k=2$ yields there exists $w\in K$ with 
 $u=\zeta_u w^r,\ v=\zeta_v w^s$ where $(r,s)=1$. Therefore
 $u_i=\zeta_i (\zeta_v w^s)^{R_i} = \zeta_i' w^{r_i}$ where $r_i=R_is$ for $1\leq i\leq k$, and so
 \[
 \gcd_{1\leq i\leq k+1}  r_i = \gcd( r, \gcd_{1\leq i\leq k}  R_is) = \gcd( r, s\gcd_{1\leq i\leq k}  R_i) =  \gcd( r, s)=1.\qedhere
 \]
 \end{proof}

\begin{corollary} \label{Cor: multfep} If $u_1,\dots,u_k$ are as in Lemma \ref{lem: premultdep}  with $N(\gcd_{1\leq i\leq k} ( u_i^n-1)^*)> 2^d$ then each $\zeta_i^n=1$ and 
\[
  \gcd_{1\leq i\leq k} ( u_i^n-1)^*  = (w^n-1)^*.
\]
\end{corollary}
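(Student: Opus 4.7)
The plan is to work congruence-class by congruence-class modulo prime-ideal powers dividing $G := \gcd_{1\le i\le k}(u_i^n-1)^*$. By Lemma~\ref{lem: premultdep} we have $u_i=\zeta_i w^{r_i}$ with $\zeta_1=1$ and $\gcd_i r_i=1$; in particular $u_1^n-1=w^{nr_1}-1$, so any prime-power $P^e$ dividing $G$ satisfies $w^{nr_1}\equiv 1\pmod{P^e}$. The first step is then to rewrite, for each $i$, the congruence $u_i^n\equiv 1\pmod{P^e}$ after raising to the $r_1$-th power:
\[
\zeta_i^{nr_1}\cdot\bigl(w^{nr_1}\bigr)^{r_i}\equiv 1\pmod{P^e},
\]
which combined with $w^{nr_1}\equiv 1$ gives $\zeta_i^{nr_1}\equiv 1\pmod{P^e}$, and hence $G$ divides the ideal $(\zeta_i^{nr_1}-1)$.

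Since $\zeta_i^{nr_1}$ is a root of unity, every archimedean conjugate of $\zeta_i^{nr_1}-1$ has modulus at most $2$, so $|N_{K/\mathbb{Q}}(\zeta_i^{nr_1}-1)|\le 2^d$ whenever $\zeta_i^{nr_1}\ne 1$; the hypothesis $N(G)>2^d$ therefore forces $\zeta_i^{nr_1}=1$ for every $i$. I then upgrade this to $\zeta_i^n=1$ by a symmetric Bezout trick. Choose integers $c_i$ with $\sum_i c_i r_i=1$, and set $\eta_n:=\prod_i\zeta_i^{nc_i}$. Taking the product of the congruences $u_i^n\equiv 1\pmod{P^e}$ with exponents $c_i$ gives $\eta_n w^n\equiv 1\pmod{P^e}$, hence $w^n\equiv\eta_n^{-1}\pmod{P^e}$; substituting back into $u_i^n\equiv 1$ yields $\zeta_i^n\equiv\eta_n^{r_i}\pmod{P^e}$. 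The same $2^d$-norm bound, applied this time to the root of unity $\zeta_i^n\eta_n^{-r_i}$, upgrades these congruences to equalities $\zeta_i^n=\eta_n^{r_i}$ in $K$, and the case $i=1$ forces $\eta_n^{r_1}=1$. The step I expect to be the main obstacle is concluding from these identities that $\eta_n=1$: this hinges on the canonical choice of $w$ from Lemma~\ref{lem: premultdep} having already absorbed any root-of-unity ambiguity compatible with the exponent vector $(r_1,\dots,r_k)$, so that the uniquely-determined $\eta_n$ with $\eta_n^{r_i}=\zeta_i^n$ for every $i$ is trivial; one then gets $\zeta_i^n=1$ for every $i$.

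Once each $\zeta_i^n=1$ has been established, we have $u_i^n-1=w^{nr_i}-1$, and the remaining step reduces to a polynomial identity. The Euclidean algorithm in $\mathbb{Z}[x]$ gives $\gcd(x^a-1,x^b-1)=x^{\gcd(a,b)}-1$, and iterating together with $\gcd_i r_i=1$ produces integer-coefficient polynomials $f_i(x)\in\mathbb{Z}[x]$ with $x-1=\sum_i f_i(x)(x^{r_i}-1)$. Substituting $x=w^n$ shows that $w^n-1$ lies in the ideal generated by the $w^{nr_i}-1$, so $\gcd_i(w^{nr_i}-1)\mid(w^n-1)$; the reverse divisibility $w^n-1\mid w^{nr_i}-1$ is immediate. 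Reading the resulting equality of fractional ideals and taking numerators yields $G=(w^n-1)^*$, completing the argument.
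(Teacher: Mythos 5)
Your reduction is sound up to a point: the norm-versus-$2^d$ argument correctly forces $\zeta_i^{nr_1}=1$, and your Bezout step correctly pins down a single root of unity $\eta_n$ with $\eta_n^{r_i}=\zeta_i^n$ for all $i$ and $\eta_n^{r_1}=1$, after which the polynomial identity $\gcd(x^a-1,x^b-1)=x^{(a,b)}-1$ finishes the job \emph{provided} $\eta_n=1$. But the step you flag as "the main obstacle" and then wave through --- that the canonical $w$ "has already absorbed any root-of-unity ambiguity," so $\eta_n=1$ --- is a genuine gap, and it is precisely where the paper's proof does its real work. The paper argues by induction on $k$, and in the base case $k=2$ it deduces from $\alpha^s=\beta^r$ that $u^{sn}=v^{rn}$, then invokes the \emph{minimality} of the exponent pair $(R,S)=(rg,sg)$ chosen in the proof of Lemma \ref{lem: premultdep} to conclude $g\mid n$ and hence $\zeta_u^n=\zeta_v^n=1$. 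That minimality is a property of the constructed $w$ that is not recorded in the statement of Lemma \ref{lem: premultdep}, so it cannot be invoked implicitly.

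The gap is not cosmetic, because the corollary is false for a general representation satisfying only the displayed conclusion of Lemma \ref{lem: premultdep}. Take $u_1=w^2$ and $u_2=-w^3$: here $\zeta_1=1$, $\gcd(r_1,r_2)=1$, and your argument produces $\eta=-1$ (indeed $\eta^2=\zeta_1$, $\eta^3=\zeta_2$), yet $\gcd(u_1-1,u_2-1)=(w+1)^*\neq(w-1)^*$ in general. The construction in Lemma \ref{lem: premultdep} would instead return $-w$ as the base, restoring the conclusion --- which shows that $\eta_n=1$ must be \emph{derived} from the minimality built into that construction (e.g.\ from $u_i^n=(\eta_nw^n)^{r_i}$ one gets relations $u_i^{nr_j}=u_j^{nr_i}$ and must argue these force $n$ into the lattice of exponents defining $w$), not asserted. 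For $k>2$ there is the further issue that the lemma's $w$ is built inductively, so the needed minimality statement relative to all $k$ numbers simultaneously itself requires proof; the paper sidesteps this by running the corollary's induction in lockstep with the lemma's, two elements at a time. Until you supply the $\eta_n=1$ step, the proof is incomplete at its central point.
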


 \begin{proof} By induction on $k$. For $k=2$, we define $R=rg, S=sg$ as in the proof of  Lemma \ref{lem: premultdep}  so that  $\zeta_u^g=\zeta_v^g=1$.
 Let $G:=(u^n-1,v^n-1)^*=(w^{rn}-\alpha, w^{sn}-\beta)^*$ where $\alpha=\zeta_u^{-n}, \beta=\zeta_v^{-n}$. Therefore $\alpha^s \equiv w^{rsn} \equiv \beta^r \pmod G$. If $\alpha^s\ne \beta^r$ then   $G^\sigma$ divides $(\alpha^s)^\sigma-(\beta^r)^\sigma\ne 0$ for each $\sigma\in \text{Gal}(K/\mathbb Q)$, so that 
 $|N(G)|\leq \prod_\sigma |(\alpha^s)^\sigma-(\beta^r)^\sigma|\leq 2^d$, contradicting the hypothesis. Therefore
  $\alpha^s=\beta^r$ and so 
   $u^{sn} = \alpha^{-s} w^{rsn}= \beta^{-r} w^{rsn} = v^{rn}$. This implies  $g|n$ by the minimality of $R$ and $S$, and so
 $\alpha=\beta=1$. Therefore $G =(w^{rn}-1, w^{sn}-1)^*=(w^n-1)^*$. 
 
 Also $w$ cannot be a root of unity else $N(w^n-1)\leq 2^d$ contradicting   hypothesis.
 
 Now suppose the result is true for $k$ (but replace $w$ by $v$ in the conclusion for convenience) and then let $u=u_{k+1}$, so that
 $u$ and $v$ are multiplicatively dependent and   
 \[
 \gcd_{1\leq i\leq k+1} ( u_i^n-1)^* = \gcd( u^n-1,  \gcd_{1\leq i\leq k} ( u_i^n-1)^*) =
 \gcd( u^n-1, v^n-1)^*
 \]
 and so we can (again) apply the result with $k=2$ to complete our proof.  
  \end{proof}

\begin{corollary} \label{Cor: multfep2} Suppose that $u_1,\dots,u_k$ are algebraic numbers which are not roots of unity or $0$. Fix $\epsilon >0$. If $n$ is sufficiently large then either $u_1^n,\dots,u_k^n$ are all integer powers of some $w^n$
with $gcd_{1\leq i\leq k} ( u_i^n-1)^*  = (w^n-1)^*$ or $gcd_{1\leq i\leq k} ( u_i^n-1)^* <e^{\epsilon n}$.
\end{corollary}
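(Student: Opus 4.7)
The plan is to split on whether the $u_i$ are pairwise multiplicatively dependent, reducing the statement to the two tools already in hand: Lemma~\ref{lem: premultdep} combined with Corollary~\ref{Cor: multfep} in the dependent case, and Proposition~\ref{prop: CZ} in the independent case.

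First I would consider the case that every pair $u_i,u_j$ is multiplicatively dependent. Then Lemma~\ref{lem: premultdep} produces a common $w\in K$, roots of unity $\zeta_1,\dots,\zeta_k$, and integers $r_1,\dots,r_k$ with $\gcd_i r_i=1$ such that $u_i=\zeta_i w^{r_i}$. Take $n$ large enough that $e^{\epsilon n}>2^d$, where $d=[K:\mathbb Q]$, and split on the size of $N(\gcd_i(u_i^n-1)^*)$: if this norm is at most $2^d$, it is already below $e^{\epsilon n}$ and the second alternative holds; otherwise the hypothesis of Corollary~\ref{Cor: multfep} is satisfied, so each $\zeta_i^n=1$ (whence $u_i^n=(w^n)^{r_i}$ is an integer power of $w^n$) and $\gcd_i(u_i^n-1)^*=(w^n-1)^*$, matching the first alternative exactly.

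Next I would handle the complementary case, in which some pair $u_i,u_j$ is multiplicatively independent. Here I would invoke Proposition~\ref{prop: CZ} on this pair: for $n$ sufficiently large in terms of $\epsilon$, $u_i$, $u_j$, one has $|N((u_i^n-1,u_j^n-1))|<e^{\epsilon n}$. Since $\gcd_{1\le\ell\le k}(u_\ell^n-1)^*$ divides the pairwise gcd, the same bound persists, giving the second alternative. As there are at most $\binom{k}{2}$ pairs to consider, a uniform threshold on $n$ is easy to arrange.

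There is no real obstacle here; the result is essentially a clean repackaging of Corollary~\ref{Cor: multfep} and Proposition~\ref{prop: CZ}. The only care needed is (i) to take $n$ past the maximum of the finitely many thresholds that arise (the comparison with $2^d$ and the Proposition~\ref{prop: CZ} threshold for each relevant pair), and (ii) to verify that the two alternatives in the conclusion line up with the two branches of the dichotomy; both points are immediate.
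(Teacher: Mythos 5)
Your argument is correct and is exactly the derivation the paper intends: Corollary \ref{Cor: multfep2} is stated without proof precisely because it follows by the dichotomy you describe, combining Lemma \ref{lem: premultdep} with Corollary \ref{Cor: multfep} in the pairwise multiplicatively dependent case and Proposition \ref{prop: CZ} applied to a single independent pair otherwise. The bookkeeping points you flag (taking $n$ past the finitely many thresholds, and reading the gcd bound as a bound on the norm) are handled appropriately.
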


\subsection{Proof of Theorem \ref{thm: main4}}
We may assume that $u_n$ is not divisible by a non-trivial exponential LDS, else we are done.

  Let $y>2\epsilon^{-2}$ and then $s_y(n)=\sum_{p|n, p>y} 1/p$. If $p|n$ we write $n=pm$ so  that
 \begin{align*}
 \#\{ n\leq x: n\equiv a \text{ mod }M: s_y(n)\geq \epsilon\} & \leq \epsilon^{-1} \sum_{\substack{ n\leq x\\ n\equiv a \pmod M}}  \sum_{p|n, p>y} \frac 1p \\
 & \leq \epsilon^{-1}  \sum_{y<p\leq x}  \frac 1p \sum_{\substack{  pm\leq x \\  pm\equiv a \pmod M}}  1\\ 
 & \leq \epsilon^{-1}  \sum_{y<p\leq x}\bigg( \frac x{p^2M}+ \frac 1p \bigg) <\frac \epsilon 2 \frac xM
 \end{align*}
if $x\gg M\log M$. Therefore if $x$ is sufficiently large there are $\geq \epsilon \frac x{2M}$ integers $n\leq x$ for which $n\equiv a \pmod M$, with $s_y(n)< \epsilon$ and $\gcd(u_n,\dots,u_{(k-1)n})>e^{\epsilon n}$.

Taking $\kappa=\epsilon$ in Corollary \ref{cor: Whattodo} we deduce that for each such $n$ there exists   a partition $S \in \mathcal I^\dag$ for which $\mathcal G_n(S)  \geq e^{\delta n}$ where $\delta=\rho_k \epsilon$.

We claim that  $S \in \mathcal I^*$ for, if not, 
 there exist ratios $\alpha_i/\alpha_j$ and $\alpha_{i'}/\alpha_{j'}$
with $i\in I_j$ and $i'\in I_{j'}$ that are   multiplicatively independent. Therefore
$\mathcal G_n(S)$ divides $N( (\alpha_i/\alpha_j)^n-1,(\alpha_{i'}/\alpha_{j'})^n-1)$ which is $<e^{\delta n}$ by Proposition
\ref{prop: CZ}, a contradiction.

By Corollary \ref{Cor: multfep2} we deduce that there exists an algebraic integer $\gamma$ such that 
each $(\alpha_i/\alpha_j)^n$ with $i\in I_j$ equals $(\gamma^n)^{e_i}$ for some integer $e_i$. In particular we have
\[
u_{rn} = \sum_{j=1}^h \alpha_j^{rn} \sum_{i\in I_j} g_i(rn) (\gamma^n)^{re_i} \equiv \sum_{j=1}^h \alpha_j^{rn} g_{I_j}(rn)= 0 \pmod {\gamma^n-1}
\]
for all $r\geq 1$. Converting this to polynomial form (as in section \ref{sec: Period}) we deduce that $\gamma^n-1$ is a factor of $u_n$ for $n\equiv a \pmod q$.  

Now $u_n$ is an integer and so $(\gamma^n-1)^\sigma$ is a factor of $u_n$ for $n\equiv a \pmod q$ for every $\sigma\in \text{Gal}(K/\mathbb Q)$, and these factors combine to yield a polynomial generated LDS $(v_n)_{n\geq 0}$ which divides
$u_n$ whenever $n\equiv a \pmod M$. The result follows by definition.\hfill  \qed

 \subsection{GCDs of LDSs} \label{sec: GCDsLDSs}
 
 We   use the Corvaja-Zannier result to show that $\gcd(2^n-1,F_n)$ is not a LDS (though it is easily seen to be a division sequence).
 
 \begin{theorem} \label{thm: NotALinRec}
 $u_n=(2^n-1,F_n)$ does not satisfy a linear recurrence.
 \end{theorem}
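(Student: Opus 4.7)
Assume for contradiction that $u_n = (2^n-1, F_n)$ satisfies a linear recurrence; since every gcd is a division sequence, $u_n$ is then an LDS. The plan is to show $u_n$ grows subexponentially via Corvaja--Zannier, invoke Corollary \ref{thm: main2} to force the rigid shape $u_n = \kappa_a(n/d)^{e_d}$, and then derive an arithmetic contradiction from the fact that $p \mid u_{r(p)}$ where $r(p) := \mathrm{lcm}(\mathrm{ord}_p(2), r_F(p))$ is coprime to $p$.

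In $K = \mathbb Q(\sqrt{5})$, writing $\alpha, \beta = \tfrac12(1\pm\sqrt{5})$ with $\alpha\beta = -1$, one has $F_n \sqrt{5}\,\alpha^n = \alpha^{2n} - (-1)^n$. Hence any prime $\mathfrak p$ of $\mathbb Z[\alpha]$ dividing $u_n$ satisfies $\mathfrak p \mid 2^{2n}-1$ and $\mathfrak p \mid \alpha^{4n}-1$ (squaring $n$ disposes of both parities, and the ramified prime above $5$ contributes only a $\log n$ factor by Proposition \ref{prop: P-div}). Since $2$ and $\alpha^2$ are multiplicatively independent---$\alpha^2$ is a unit while $2$ is not---Proposition \ref{prop: CZ} gives $u_n \leq e^{\epsilon n}$ for every $\epsilon > 0$ and all $n$ sufficiently large. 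Moreover Proposition \ref{prop: P-div} yields $v_p(u_n) = O_p(\log n)$ for each rational prime $p$, so Corollary \ref{thm: main2} applies and expresses $u_n$ as a product of a periodic LDS, a power LDS, and polynomially generated LDS's. A non-trivial polynomially generated factor would grow like $c^n$ with $c > 1$ (its dominant characteristic root has absolute value $> 1$), contradicting the subexponential bound; hence $u_n = \kappa_a (n/d)^{e_d}$ where $a \equiv n \pmod{M}$ and $d = (n, M)$, for fixed positive integers $M$, $e_d$, and integers $\kappa_a$.

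To finish, pick any prime $p$ not dividing the fixed integer $M \prod_a |\kappa_a|$; infinitely many such $p$ exist. By definition of $r(p)$ we have $p \mid u_{r(p)}$, while the putative shape gives $u_{r(p)} = \kappa_a (r(p)/d)^{e_d}$ with $a \equiv r(p) \pmod{M}$ and $d = (r(p), M)$. Since $p \nmid \kappa_a$, we must have $p \mid r(p)/d$, and hence $p \mid r(p)$. But $\mathrm{ord}_p(2) \mid p-1$ and $r_F(p) \mid p - \bigl(\tfrac{5}{p}\bigr)$, so $r(p) \mid p^2-1$ is coprime to $p$; contradiction. The main obstacle is the first step: $F_n$ is not literally $\gamma^n-1$, so one must route through $\alpha^{2n}-(-1)^n$ (and then $\alpha^{4n}-1$ to kill the sign) before Proposition \ref{prop: CZ} can be applied. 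Once that is arranged, the structural step is a direct appeal to Corollary \ref{thm: main2} and the final divisibility check is elementary.
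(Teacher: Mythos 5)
Your proof is correct and follows essentially the same route as the paper: Corvaja--Zannier (Proposition \ref{prop: CZ}) forces $u_n=e^{o(n)}$, the structure theory then pins $u_n$ down to a periodic sequence times a power of $n/d$, and a prime $p$ dividing $u_m$ for some $m$ coprime to $p$ yields the contradiction. The only cosmetic difference is in the middle step, where the paper applies Kronecker's theorem to the characteristic roots and then Theorem \ref{thm: main}, while you invoke Corollary \ref{thm: main2} and discard polynomially generated factors via their exponential growth.
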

 
 \begin{proof}   Assume that $u_n$ satisfies a linear recurrence and so is a LDS (as it is clearly a division sequence).
 Since  $2$ and $\frac{1+\sqrt{5}}2\big/ \frac{1-\sqrt{5}}2$ are  multiplicatively independent we see that 
 $u_n=e^{o(n)}$ by Proposition \ref{prop: CZ}.  Therefore each $|\alpha_i|\leq 1$, and as $u_n\in \mathbb Z$ we get all of the conjugates, and therefore they are roots of unity by Kronecker's Theorem.  This implies that there exists an integer $M$, such that if
 $(n,M)=d$ then $u_n=u_d (n/d)^{e_d}$ by Theorem \ref{thm: main}
 
 Let $p$ be a  prime $\equiv \pm 1 \pmod 5$ and $>u_M$. Now $p$ divides $(F_{p-1}, 2^{p-1}-1)=u_{p-1}$ which divides
 $u_d (p-1)^{e_d}$, and so $p$ divides $u_d$ which divides $u_M$, a contradiction as   $p>u_M$.
 \end{proof}
 
 This argument is easily generalized to many analogous situations.

 \section{Low order LDSs with $u_0=0$} \label{sec: Loworder}
 
 In this section we return to the quest of other researchers in this area to investigate the LDS's of low order.
 
 \subsection{Reducing the work} \label{sec: Reducing}
 
 \begin{lemma} \label{lem: Simplifying}
 If $(u_n)_{n\geq 0}$ is a LDS of order $k$ and period $M$ in the integers with $u_M\ne 0$ then
 $(v_n)_{n\geq 0}$ with $v_n=u_{Mn}/u_M$ is a LDS of order $\leq k$ and period $1$ (and in fact of period $k$ unless $u_n$ is degenerate).
 \end{lemma}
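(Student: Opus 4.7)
The plan is to lift the formula \eqref{eq: Formula} to the subsequence $(u_{Mn})_{n\ge 0}$ and read off its structure from the characteristic roots $\alpha_i^M$. Writing $u_n=\sum_{i=1}^m g_i(n)\alpha_i^n$ with the $\alpha_i$ distinct and each $g_i$ of degree $k_i-1$, we get
\[
u_{Mn}=\sum_{i=1}^m g_i(Mn)(\alpha_i^M)^n,
\]
which satisfies the linear recurrence whose characteristic polynomial divides $\prod_i(x-\alpha_i^M)^{k_i}$, a polynomial of degree $\sum_i k_i=k$. Since $u_M\ne 0$, the sequence $v_n:=u_{Mn}/u_M$ satisfies the same recurrence (scaling by a nonzero constant does not affect the order), so $(v_n)_{n\ge 0}$ has order at most $k$. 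Divisibility is inherited from $(u_n)$: if $m\mid n$ then $Mm\mid Mn$, so $u_{Mm}\mid u_{Mn}$, and dividing through by the common factor $u_M\ne 0$ gives $v_m\mid v_n$ in $\mathbb{Z}$ (the quotients are integers because $M\mid Mm\mid Mn$ and $(u_n)$ is an LDS).

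The key step, and the one requiring care, is the period computation. The characteristic roots of $(v_n)$ are among the $\alpha_i^M$, so I need to show that whenever a product $\prod_i(\alpha_i^M)^{e_i}=\prod_i\alpha_i^{Me_i}$ is a root of unity, it in fact equals $1$. If this product is some root of unity $\eta$, then raising to the order of $\eta$ shows that $\prod_i\alpha_i^{e_i}$ is itself a root of unity. By the defining property of the period $M$ of $(u_n)$, this latter product is an $M$-th root of unity, so its $M$-th power is $1$. But that $M$-th power is precisely $\prod_i\alpha_i^{Me_i}=\eta$, so $\eta=1$. Hence no non-trivial root-of-unity combinations arise among the $\alpha_i^M$'s, and the period of $(v_n)$ is $1$.

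For the final claim, suppose $(u_n)$ is non-degenerate, so that $\alpha_i/\alpha_j$ is never a root of unity for $i\ne j$; then $\alpha_i^M\ne \alpha_j^M$ as well (otherwise $(\alpha_i/\alpha_j)^M=1$ would make $\alpha_i/\alpha_j$ a root of unity). Hence no collapsing of terms occurs in the representation of $u_{Mn}$: each $\alpha_i^M$ appears with its own coefficient polynomial $g_i(Mn)$, of the same degree $k_i-1$ as $g_i$, so the minimal recurrence for $(v_n)$ has order exactly $\sum_i k_i = k$. The main obstacle here is the period calculation, but it is essentially the content of the period's definition: taking $M$-th powers is exactly the operation that kills all multiplicative relations among characteristic roots up to roots of unity.
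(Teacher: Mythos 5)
Your proof is correct and follows essentially the same route as the paper's (which is only a brief sketch citing Ward): pass to the characteristic roots $\alpha_i^M$, note that the period is $1$ directly from the definition of period, and observe that the order drops below $k$ exactly when two roots differ by an $M$th root of unity, i.e.\ when $(u_n)_{n\geq 0}$ is degenerate. You merely spell out the details the paper leaves implicit (including the divisibility of the $v_n$ and the fact that the parenthetical ``period $k$'' in the statement should read ``order $k$'').
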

 
 \begin{proof}
$(v_n)_{n\geq 0}$  has period $1$ by the definition of ``period''. It has order $\leq k$ since if $\rho_1,\dots,\rho_\ell$ are the roots of an irreducible factor of the characteristic polynomial of $(u_n)_{n\geq 0}$, then $\rho_1^M,\dots,\rho_\ell^M$ are the roots of a polynomial in $\mathbb Z[x]$ by elementary Galois theory. The order of $(v_n)_{n\geq 0}$ is less than that of  $(u_n)_{n\geq 0}$ if and only if there are distinct roots of the characteristic polynomial of $(u_n)_{n\geq 0}$ that differ by an $M$th root of unity, that is, if $(u_n)_{n\geq 0}$ is degenerate. (This was all observed by Ward \cite{Ward1} in 1937.)
 \end{proof}

 Lemma \ref{lem: Simplifying} allows us to work with  $(v_n)_{n\geq 0}$, a LDS of period $1$. By Theorem \ref{thm: main}   $v_n=n^e \lambda^{n-1} w_n$ where $w_n$ is a polynomially generated LDS,  and integers $e\geq 0, \lambda\ne 0$. 
(We obtained this same decomposition for $u_n$ if it is non-degenerate in Corollary \ref{thm: main6}.)
 Then $\text{order}(v_n)=(e+1)\text{order}(w_n)$. Therefore we have reduced determining all  LDS's of order $\leq K$,  to finding all period 1, polynomially generated LDS's of order $\leq K$ (although finding all LDS's of order $\leq K$ from this information can be challenging). Moreover, for polynomially generated LDS's, each $g_i(t)=g_i\ne 0$ in \eqref{eq: Formula} is a constant, $\sum_{i=1}^k g_i=u_0=0$ (so that $k\geq 2$), and we may assume that $w_1=1$.

 In  section \ref{sec: FindFactors} we  determined $w_n$ from $\mathcal I^*$. To do so we must 
 \begin{itemize}
 \item Identify all partitions of $\mathcal I=I_1\cup \dots \cup \{ 1,\dots, k\}$ with each $\sum_{i\in I_j} g_i=0$.  
 \item Assume  there exists  $\eta_{\mathcal I}\in K$ such that if $i\in I_j$ then
 $\alpha_i = \alpha_j \eta_{\mathcal I}^{r_{\mathcal I,i}}$ for some $r_{\mathcal I,i}\in \mathbb Z$ with $\gcd_i r_{\mathcal I,i}=1$.
 \end{itemize}
 
 If $(u_n)_{n\geq 0}$ is a LDS of order $k$ and period $M$, then the torsion associated with the characteristic roots must generate the $M$th roots of unity. Moreover if any given primitive $r$th root of unity is generated then we can obtain the others by conjugation. The smallest set of roots of unity which do this involves the primitive $p^e$th roots of unity for all $p^e>2$ for all prime powers $p^e\|M$.  Therefore
 \begin{equation} \label{eq: orderandperiod}
 k \geq \sum_{p^e\| M, \\  p^e>2}  \phi(p^e).
 \end{equation}

 \subsection{Polynomially generated LDS's with $k=2$, period 1 and $u_0=0$} We have $w_n=a\alpha^n+b\beta^n$ where $a+b=0$ and $w_1=1$ so
 $w_n=L_n= \frac{\alpha^n-\beta^n}{\alpha-\beta}$, a Lucas sequence.

\subsection{Polynomially generated LDS's with $k=3$, period 1 and $u_0=0$} \label{sec: k=3,p=1}
Here we have  $w_n=a \alpha^n+b\beta^n + c\gamma^n$ where $a+b+c=0$ and  
 no proper subsum can equal $0$ (as $a,b,c$ are all non-zero). Hence $\mathcal I^*=\{ \{ 1,2,3\}\}$.
 Therefore $\beta/\alpha=\eta^r$ and $\gamma/\alpha=\eta^s$  with $(r,s)=1$ and so $( \beta^n- \alpha^n, \gamma^n- \alpha^n)=(\eta^n-1)^*$.  This is the only possible such divisor of $w_n$ so we must have
 $w_n=\kappa  ( (\eta^n-1)^*)^e$ for some $\kappa \in K$ and integer $e\geq 1$. Lemma \ref{lem: order} implies that $e\leq 2$ (and, in general, $e\leq k-1$) and so $e=2$ for $k$ to be $3$. Therefore
$w_n=L_n^2$, for some Lucas sequence $L_n$.  We require only that $L_n^2\in \mathbb Z$, but not necessarily
$L_n$, so $w_n$ might be a Lehmer sequence, as discussed in section \ref{sec: conjugates}.
 
\subsection{The number of monomials in a product}
 
\begin{lemma}\label{lem: order} If none of $\gamma_1,\dots,\gamma_t$ are  roots of unity or $0$, and
\[
\prod_{i=1}^t (\gamma_i^n-1)^{h_i} \text{ divides }  \sum_{j=1}^r \delta_j \alpha_j^n \text{ for all } n\geq 1
\]
where the $\alpha_j$ are distinct,  and the $\delta_j\ne 0$ then $r\geq H+1$ where $H=\sum_{i=1}^t h_i$.
\end{lemma}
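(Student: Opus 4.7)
The plan is to lift the divisibility to a polynomial factorization in a Laurent polynomial ring, reduce to one variable via a generic substitution, and then bound the number of monomials via a Vandermonde-type multiplicity argument.

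By the Hadamard quotient theorem, $g(n) := f(n)/p(n)$ is a linear recurrence sequence, where $f(n) = \sum_j \delta_j \alpha_j^n$ and $p(n) = \prod_i (\gamma_i^n - 1)^{h_i}$. Let $\Gamma \subset \overline{\mathbb{Q}}^*$ be the finitely generated multiplicative group containing all the $\gamma_i$, all the $\alpha_j$, and the characteristic roots of $g$. Write $\Gamma = \langle \zeta \rangle \oplus \mathbb{Z}^s$ with $\zeta$ of order $M$, choose a $\mathbb{Z}$-basis $\beta_1, \ldots, \beta_s$ of the torsion-free part, and write $\alpha_j = \zeta^{a_{j,0}} \beta^{\mathbf{a}_j}$, $\gamma_i = \zeta^{b_{i,0}} \beta^{\mathbf{b}_i}$ with the exponent vector $\mathbf{b}_i = (b_{i,1}, \ldots, b_{i,s})$ nonzero (since $\gamma_i$ is not torsion). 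Because $n \mapsto (\zeta^n, \beta_1^n, \ldots, \beta_s^n)$ has Zariski-dense image in $\mu_M \times (\overline{\mathbb{Q}}^*)^s$ on any progression mod $M$, the identity $f = pg$ lifts to the Laurent polynomial identity
\[
F(T, Y) = P(T, Y)\, G(T, Y)
\]
in $R := \overline{\mathbb{Q}}[T_1^{\pm 1}, \ldots, T_s^{\pm 1}, Y]/(Y^M - 1)$, where $F(T, Y) = \sum_j \delta_j Y^{a_{j,0}} T^{\mathbf{a}_j}$ and $P(T, Y) = \prod_i (Y^{b_{i,0}} T^{\mathbf{b}_i} - 1)^{h_i}$. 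Decomposing $R = \prod_{\eta^M = 1} \overline{\mathbb{Q}}[T_1^{\pm 1}, \ldots, T_s^{\pm 1}]$, this produces $F_\eta(T) = P_\eta(T)\, G_\eta(T)$ in each component.

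After reducing to the case where the $\gamma_i$ are multiplicatively independent (absorbing relations among them into the basis $\beta_k$ by Lemma~\ref{lem: premultdep}), the character map $T \mapsto (T^{\mathbf{b}_i})_i$ is surjective onto $(\overline{\mathbb{Q}}^*)^t$, so for each $\eta \in \mu_M$ there is a $T_\eta$ in the torus with $T_\eta^{\mathbf{b}_i} = \eta^{-b_{i,0}}$ for every $i$; at such a point every binomial factor of $P_\eta$, hence $P_\eta$ itself, vanishes to order $H$. Not every $F_\eta$ can be identically zero: if it were, then $\sum_{j \in J_\mathbf{a}} \delta_j \eta^{a_{j,0}} = 0$ for every fiber $J_\mathbf{a} := \{j : \mathbf{a}_j = \mathbf{a}\}$ and every $\eta$, which by Fourier inversion on $\mathbb{Z}/M$ would force every $\delta_j = 0$, contradicting the hypothesis. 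Fix any $\eta$ with $F_\eta \neq 0$; the identity $F_\eta = P_\eta G_\eta$ then forces $F_\eta$ to vanish at $T_\eta$ to order $\geq H$.

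To conclude, substitute $T_k = T_{\eta, k}\, y^{w_k}$ for a generic integer weight vector $\mathbf{w} \in \mathbb{Z}^s$ chosen so that the integers $\mathbf{w} \cdot \mathbf{a}$ are distinct for the finitely many distinct $\mathbf{a}$ in the support of $F_\eta$. The result is a univariate Laurent polynomial $\tilde F_\eta(y)$ with exactly $|\mathrm{supp}(F_\eta)|$ distinct monomials, still vanishing at $y = 1$ to order $\geq H$. Applying the Vandermonde-type determinant of Corollary~\ref{cor: Vand1} to the square system of derivative evaluations at $y = 1$, one sees that a nonzero Laurent polynomial with $R$ distinct monomials cannot vanish to order $\geq R$ at any nonzero point; hence $|\mathrm{supp}(F_\eta)| \geq H + 1$. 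Since distinct monomials of $F_\eta$ come from distinct exponent vectors $\mathbf{a}_j$, and the $r$ distinct $\alpha_j$'s project to at least that many distinct $\mathbf{a}_j$'s, we obtain
\[
r \;\geq\; |\{\mathbf{a}_j\}| \;\geq\; |\mathrm{supp}(F_\eta)| \;\geq\; H + 1.
\]
The main obstacle is securing a single $\eta$ that both admits a common zero $T_\eta$ of $P_\eta$ in the torus and has $F_\eta \neq 0$; this is precisely where the reduction to multiplicative independence of the $\gamma_i$ and the Fourier argument on $\mu_M$ come in.
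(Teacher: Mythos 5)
Your overall strategy is the same as the paper's: apply the Hadamard quotient theorem to turn the divisibility of values into an identity of linear recurrences, lift that to a Laurent-polynomial identity via a multiplicative basis, specialise to one variable with a generic weight vector, and play off the order of vanishing at a point against the number of monomials via the Vandermonde determinant of Corollary \ref{cor: Vand1}. Your explicit $\mu_M$-decomposition and the Fourier argument showing that some $F_\eta\neq 0$ are correct, and are in fact more careful about torsion than the paper's proof, which in effect works only at $\eta=1$ by raising everything to the $M$th power. (Both you and the paper also quietly assume the Hadamard quotient is simple; a sentence comparing degrees in an auxiliary variable for $n$ would dispose of that.)

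The genuine gap is exactly at the step you flag as ``the main obstacle,'' and it is not closed. The Fourier argument yields \emph{some} $\eta$ with $F_\eta\neq 0$, but nothing guarantees that for this $\eta$ the binomial factors of $P_\eta$ have a common zero in the torus: the system $T^{\mathbf b_i}=\eta^{-b_{i,0}}$ is consistent if and only if $\eta^{c}=1$ for every relation $\prod_i\gamma_i^{e_i}=\zeta^{c}$, so when the $\gamma_i$ satisfy a relation with nontrivial torsion part (e.g.\ $\gamma_1=4$, $\gamma_2=-4$, where $\gamma_2\gamma_1^{-1}=-1$) only the $\eta$ in a proper subgroup of $\mu_M$ admit a common zero, and at the remaining $\eta$ the product $P_\eta$ vanishes at no single point to order more than $\max_i h_i$. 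It is entirely possible a priori that $F_\eta$ vanishes identically for every ``good'' $\eta$ (this just says $f(n)=0$ on the corresponding residue classes, where the divisibility is vacuous) while $F_\eta\neq 0$ only at ``bad'' $\eta$, and then your argument yields a weaker bound than $r\geq H+1$. The proposed remedy --- ``reducing to the case where the $\gamma_i$ are multiplicatively independent by absorbing relations into the basis $\beta_k$'' --- is not a reduction: changing the basis changes neither the $\gamma_i$ nor the binomials $\gamma_i^n-1$, and no choice of basis makes the inconsistent system above consistent. A genuine repair requires something extra, e.g.\ passing to a sub-progression $n\equiv a\pmod{M'}$ on which the relevant component of $f$ survives and rerunning the entire argument with the $\gamma_i^{M'}$ (whose relations then have trivial torsion), which is essentially the refinement the paper's own $\eta=1$ shortcut is gesturing at; as written, your proof establishes the lemma only under the additional hypothesis that every multiplicative relation among the $\gamma_i$ has trivial torsion part, in particular when they are multiplicatively independent.
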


This result is ``best possible'' since for $(\gamma-1)^h$ we have $r=h+1=H+1$.

\begin{proof} By the Hadamard quotient theorem there exists a linear recurrence sequence $v_n$ which is the quotient of the right-hand side divided by the left. The $\alpha_i, \gamma_j$ and characteristic roots for $v_n$ can all be written in terms of a multiplicative basis $\beta_1,\dots,\beta_b$ and torsion. If the torsion are all $M$th roots of unity then each
\[
\gamma_j^M=\beta_1^{c_{j,1}}\cdots \beta_b^{c_{j,b}} \text{ and } \alpha_j^M = \beta_1^{a_{j,1}}\cdots \beta_b^{a_{j,b}}
\]
 for integers $c_{j,i}$ and $a_{j,i}$, where the $c_j$-exponent vectors are distinct, and the $a_j$-exponent vectors are distinct.   
 As in section \ref{sec: Period} we can replace each $\beta_i^n$ by a variable $y_i$ and then the corresponding  
 left-hand side divides the right-hand side. 
 
 Next we replace each $y_i$ by $x^{N^{i-1}}$ where   integer $N>2\max_{i,j}  |a_{j,i}|, |c_{j,i}|$, so that 
  each $\alpha_j^n$ is replaced by $x^{a_j}$ where $a_j=\sum_{i=1}^b  a_{j,i}N^{i-1}$ for $1\leq j\leq r$, 
  and each $\gamma_j^n$ by $x^{c_j}$ where $c_j=\sum_{i=1}^b  c_{j,i}N^{i-1}$. 
  The $a_j$ are distinct for if $a_j=a_k$ then $a_{j,1}\equiv a_{k,1} \pmod N$ and therefore $a_{j,1}=a_{k,1} $. Looking at the next coefficient and repeating the argument we eventually deduce that $\alpha_j=\alpha_k$, a contradiction. 
From these substitutions we deduce that
\[
\prod_{i=1}^t (x^{c_i}-1)^{h_i} \text{ divides }  \sum_{j=1}^r \delta_j x^{a_j}.
\]
The left-hand side is divisible by $(x-1)^H$.
Applying $(x\frac d{dx})^\ell$ to the right-hand side for $0\leq \ell \leq H-1$ at $x=1$, we obtain
\[
\sum_{j=1}^r \delta_j a_j^\ell =0 \text{ for } 0\leq \ell \leq H-1.
\]
If $r\leq H$ we can construct the Vandermonde matrix, which has non-zero determinant as the $a_j$ are distinct, so that the $\delta_j$ are all $0$, contradicting the hypothesis. Therefore $r\geq H+1$ as claimed.
\end{proof}

Lamzouri points out an alternative proof of this last step:  Here $H$ is no more than the number of positive real roots of our polynomials which is bounded by the number of sign changes amongst the coefficients, by Descartes'  ``rule-of-signs'', which is $\leq r-1$.

\subsection{LDS's with $k=3, u_0=0$ and $u_1=1$} \label{sec: LDSk=3}
Lemma \ref{lem: Simplifying} implies that $u_{Mn}$ has period 1 and order $\leq 3$.

$\bullet$ \ If $u_{Mn}$ has order $3$ then $u_n$ is non-degenerate and we saw that $u_{Mn}=u_ML_{Mn}^2$  by section \ref{sec: k=3,p=1}, which we write as $u_{Mn}=c( (\alpha^{2})^{Mn} -2 (\alpha\beta)^{Mn} + (\beta^{2})^{Mn} )$
for some constant $c>0$ (where $\alpha$ and $\beta$ are only determined up to an $M$th root of unity).  Adjusting $\alpha$ and $\beta$ by a suitable $2M$th root of unity, we deduce that $u_{n}=c( (\alpha^{2})^{n} -2 (\zeta \alpha\beta)^{n} + (\beta^{2})^{n} )$ for some $M$th root of unity $\zeta$. Moreover $\alpha/\beta$ is not a root of unity, so any automorphism of $\mathbb Q(\zeta)$ fixes $\alpha$ and $\beta$, and so 
mapping $\zeta\to \zeta^{-1}$ we see that  $\zeta \alpha\beta= \zeta^{-1} \alpha\beta$, that is $\zeta=\pm 1$.
If $\zeta=-1$ we replace $\beta$ by $-\beta$ and therefore we have $u_n=L_n^2$. Again this is either the square of a Lucas sequence, or it is a Lehmer sequence.

$\bullet$ \ If $u_{Mn}$ has order $1$ and $u_n=\sum_i a_i \alpha_i^n$ then the $\alpha_i^M$ are all equal, and so
we can write $\alpha_i=\zeta_i a$ where each $\zeta_i$ is an $M$th root of unity. Therefore
$u_n= a^{n-1} v_n$ where $v_n$ is periodic of order 3. The only possible sets of three roots of unity involving only complete set of conjugates 
are $\{ \pm 1,\omega,\omega^2\}, \{ \pm 1,-\omega,-\omega^2\}$ and $\{ \pm 1,i,-i\}$, and the map $a\to -a$ means we can restrict attention to the sets with $1$ rather than $-1$.  In each case
$v_n$ starts $0,1,b\in \mathbb Z$ and then for the characteristic polynomial

$(x-a)(x^2+ax+a^2)$ we need $b$ divides $a^2$; 

$(x-a)(x^2+a^2)$\hskip .41in  we need $b-1$ divides $a^3$; 

$(x-a)(x^2-ax+a^2)$ we need $b$ divides $3a^2$, $2b-3$ divides $3a^4$ and $b-2$ divides $a^5$. 

\noindent For the characteristic polynomial to be in $\mathbb Z[x]$ we must have $A:=a^3\in \mathbb Z$ in the first case, and 
$a\in \mathbb Z$ in the other two.

$\bullet$ \ If $u_{Mn}$ has order $2$ then it is a Lucas sequence by section \ref{sec: k=3,p=1}, and so $u_n=a\alpha^n + b(\zeta\alpha)^n-(a+b) \beta^n$ where $\zeta^M=1$. Taking $n\equiv 1 \pmod M$ then $u_n|u_{2n}$ so
\[
u_n \text{ divides } (a+b)u_{2n}-u_n( (a+\zeta b) \alpha^n+(a+b)\beta^n) = ab (\zeta-1)^2 \alpha^{2n}
\]
and similarly $u_n$ divides $(a+b) ab (\zeta-1)^2 \beta^{2n}$ and so $u_n$ divides $(a+b) ab (\zeta-1)^2$ as $(\alpha,\beta)=1$. This is not possible as $u_n$ is not bounded (else it would be periodic).

The discussion in section \ref{sec: Reducing} reveals that the only other possibility is $n^2a^{n-1}$.

We have therefore proved that \emph{the only LDS's of order 3 with $u_0=0$ and $u_1=1$ are 
$n^2a^{n-1}, L_n^2a^{n-1}$ or $w_na^{n-1}$ (where $L_n$ is a Lucas sequence in $\mathbb Z$, and $w_n$ is a Lehmer sequence) and the three degenerate families of examples just above.}
We observe that the characteristic polynomials are $x^3-A$,
\[
(x-a)^3,\ (x-a\alpha^2)(x-a\beta^2)\cdot (x-a\alpha\beta), (x-a)(x^2+a^2) \text{ and } (x-a)(x^2-ax+a^2).
\]
This re-establishes the classification given by Ward \cite{Ward3}. All of the  characteristic polynomials other than $x^3-A$ are reducible, as remarked upon by Hall \cite{MaHa}.

\subsection{Polynomially generated LDS's with $k=4$, period 1 and $u_0=0$} Here we have  $w_n=a \alpha^n+b\beta^n + c\gamma^n+d\delta^n$ where $a+b+c+d=0$ and $a,b,c,d\ne 0$, so
 any proper subsums equalling $0$ must be a pair  like $a+b=0$ which implies $c+d=0$. If there is another pair then  $u_n=a(\alpha^n+\delta^n-\beta^n -\gamma^n)$.
 
 If $\{1,2,3,4\}\in \mathcal I^*$ then $\beta/\alpha=\eta^r, \gamma/\alpha=\eta^s, \delta/\alpha=\eta^t$ with $(r,s,t)=1$.
 If this is the only element of $\mathcal I^*$ then $w_n$ is essentially  $(\eta-1)^e$ for some integer $e\geq 1$ where 
 $(x-1)^e = dx^t+cx^s+bx^r+a$. To obtain exactly four monomials we must have $e=3$ and therefore
 $w_n=L_n^3$, for some Lucas sequence $L_n$. Here we need $L_n^3\in \mathbb Z$ so we could have $L_n\in \mathbb Z$, but also perhaps some analogy to the Lehmer sequence, but now for cubes.
 
 If we also have $\{1,2\}\cup \{3,4\} \in \mathcal I^*$  with $(r,s-t)=q$ then $\eta^q-1$ divides $w_n$. If these are the only elements of $\mathcal I^*$ then we have 
 $w_n =\kappa\lcm[ (t^q-1)^e, (t-1)^f]_{t=\eta}= \kappa   (t^q-1)^e(t-1)^{\max\{ 0,f-e\} }\big|_{t=\eta}$. Then 
 $\max\{ e,f\} \leq 3$ by Lemma \ref{lem: order}, and so we either get $w_n=L_{qn}L_n$ for some Lucas sequence $L_n$,
 or $f=3, e=1$ or $2$ to obtain $(t^q-1)^2(t-1)$ or $(t^q-1)(t-1)^2$. These have 6 terms so two must cancel, which only happens for $(t^2-1)(t-1)^2 = t^4-2t^3 +2t-1$. Therefore $w_n=L_{2n}L_n^2$ for some Lucas sequence $L_n$.
 This could be in $\mathbb Z$ but we only need to have $c L_{2n}L_n^2\in \mathbb Z$ for some constant $c$.
 Or if $L_n$ is as in the Lehmer sequence then $L_{2n}\in \sqrt{r}\mathbb Z$ and $L_n^2\in \mathbb Z$, and so 
 we could have $\sqrt{r}L_{2n}L_n^2\in \mathbb Z$.
 
 If $\mathcal I^*=\{ \{1,2\}\cup \{3,4\}, \{1,3\}\cup \{2,4\} \}$ then 
 $\alpha/\beta, \gamma/\delta$ are powers of $\eta$, and 
 $\alpha/\gamma, \beta/\delta$ are powers of $\theta$, where $\eta$ and are   multiplicatively independent.
 Therefore $(\eta-1)^e$ and $(\theta-1)^f$ divide $w_n$, with $e,f\geq 1$. Since they are independent, the $(e+1)(f+1)$ terms in the product are all distinct. If this is $\leq 4$ then $e=f=1$.
 Therefore $w_n$ is the product of two distinct Lucas sequences, or is a Lucas sequence of order two.
   
 Finally if $\mathcal I^*=\{ \{1,2\}\cup \{3,4\}, \{1,3\}\cup \{2,4\} , \{1,2,3,4\} \}$ then
 $\alpha/\beta, \gamma/\delta$ are powers of $\eta^r$, and 
 $\alpha/\gamma, \beta/\delta$ are powers of $\eta^s$ with $(r,s)=1$, so that for some $k\in \mathbb Z$ we have
 \[
 w_n=\kappa \alpha^n( 1-t^{br+krs}-t^{ds+krs}+t^{br+ds+krs})\big|_{t=\eta} \text{ where } (b,kr)=(d,ks)=(r,s)=1.
 \]
 This is not divisible by $(1-t^r)^2$ else, by differentiating $r|ds$ and $krs=0$ and so $r|d$ and $k=0$, so that $b, d=\pm 1$ and so $r=1$, which is impossible for an element of $\mathcal I^*$. Similarly it is not divisible by  $(1-t^s)^2$.
  If $(1-t)^2$ divides then $k=0$ and so $b=d=1$, and then $w_n=\kappa \alpha^n( 1-\eta^r)(1-\eta^s)$ so that $w_n=L_{rn}L_{sn}$ for a given Lucas sequence $L_n$.
 Finally $[ 1-t^r, 1-t^s]_{t=\eta}$ might be the $\eta$-part of $w_n$. If $s<r$ this equals $(1+t+\dots +t^{s-1})(1-t^r)$ so that $k=2s$. Hence 
 $s=2$ with $r$ odd and we obtain $w_n=[L_{2n}, L_{rn}]=(\alpha^n+\beta^n)L_{rn}$. This generalizes Bala's example (the case $r=3$).

 Therefore the possibilities for $k=4$ are $w_n=L_n^3, L_{n}^2L_{2n}$ where $L_n$ is a Lucas sequence in $\mathbb Z$ or $\sqrt{r} L_{n}^2L_{2n}$ where $L_n$ is the order two Lucas sequence that yields a Lehmer sequence, or  the product of two different Lucas sequences, or  a Lucas sequence of order two (like the Guy-Williams examples), or $\lcm [L_{2n}, L_{rn}]$ (like in Bala's example).
 One can find a different perspective to this classification in \cite{ABCM}. These examples were also identified by Oosterhaut \cite{Oost}.
 
 We leave the task of determining a full classification of all LDS's of order 4 (not just those of period 1) to the reader --- we already saw that it was complicated (but feasible) to deduce such classifications, even for order 3, in section \ref{sec: LDSk=3}.
 
\subsection{Polynomially generated LDS's with $k>4$, period 1 and $u_0=0$}

 One can continue in much the same way for larger $k$ and the examples mostly generalize what we have here. For example, for $k=6$, there are products like $(x-1)^2(y-1)$ and 
\[
(x-1)^5, (x-1)^3(x^2-1), (x-1)^4(x^2-1), (x-1)^2(x^2-1)(x^3-1),  (x-1)^2(x^2-1)(x^3-1)^2.
\]
There is also $\lcm [x^3-1,x^s-1]$ where $3\nmid s$, and $\lcm [x^2-1,(x^r-1)^2]$ where $2\nmid r$.

These lead to $L_n^2M_n$, where $M_n\ne L_n$ or $L_{2n}$, the product of two Lucas sequences, or
\[
L_n^5, L_n^3L_{2n}, L_n^4L_{2n}, L_n^2L_{2n}L_{3n}, L_n^2L_{2n}L_{3n}^2, [L_{2n}, L_{rn}^2], [L_{3n}, L_{sn}].
\]
A new type of product  appears at $k=6$ from the cancelation of the $xy$-term when forming the identity
 \[
 (1-x)(1-y)(1-xy) =  1-x-y+x^2+xy^2-x^2y^2.
 \]
 This  gives the product of three inter-related Lucas sequences:
 \[
 \frac{\alpha^n-\beta^n}{\alpha-\beta} \cdot \frac{\beta^n-\gamma^n }{\beta-\gamma} \cdot \frac{\gamma^n-\alpha^n}{\gamma-\alpha}
 \]
whose product is an integer, so $\alpha,\beta,\gamma$ could be integers, or the three roots of an irreducible cubic, or
$\alpha=uv, \beta=\overline{u}v, \gamma=u \overline{v}$ where $u,v$ are quadratics in the same field, and we end up dividing through by $(uv)^{n-1}$. We can rewrite this
\[
\frac{ \zeta_1^n+\zeta_2^n+\zeta_3^n-\xi_1^n-\xi_2^n-\xi_3^n}{\zeta_1 +\zeta_2 +\zeta_3 -\xi_1 -\xi_2 -\xi_3}
\]
where $\zeta_1=\alpha\beta^2, \xi_1=\alpha \gamma^2, \zeta_2= \beta\gamma^2, \xi_2=\gamma\beta^2, \zeta_3=\gamma\alpha^2, \xi_3=\alpha\gamma^2 $ so that each $\zeta_i\xi_i=(\alpha\beta\gamma)^2\in \mathbb Z$
and $\zeta_1\zeta_2\zeta_3=(\alpha\beta\gamma)^3$. (See \cite{GRW, GW2} for similar constructions.)

One can also use this $xy$-identity in different ways, for example obtaining $L_{an}L_{bn}L_{(a+b)n}$ for any integers $a>b\geq 1$, or 
 getting the two LDS's
\begin{align*}
\frac 1{10} (2^n-1)(3^n-1)(6^n-1)& =\frac 1{10} (36^n -18^n-12^n+3^n+2^n-1) \\
\text{ and } \frac 12(2^n-1)(3^n-1)(3^n-2^n)&=\frac 12(18^n-12^n-9^n+4^n+3^n-2^n).
\end{align*}

  .

 \subsection{Periodic LDS's of low order}  \label{sec: periodiclow}
 Since $(u_n)_{n\geq 0}\in \mathbb Z$, its characteristic roots contain whole sets of conjugate algebraic numbers; if $u_n$ is periodic these must be sets of primitive $m$th roots of unity for various $m$, and there can be no repetitions. Therefore if  the characteristic polynomial is $\prod_{m\in \mathcal M} \phi_{m}(x)$ for some set $\mathcal M$ of distinct integers then $\lcm_{m\in \mathcal M} m=M$ and order$(u_n)=k=\sum_{m\in \mathcal M} \phi(m)$. Running through the options
 we have, when $u_0=0$, and we multiply through by $(-1)^{n-1}$ to guarantee $u_2>0$:
    
\[ \begin{array}{ccll}
k & M & \mathcal M &  u_0,\dots,u_{M-1} \\
2 & 2 &   \{ 1,2\}    &  0,1\\
2 & 3 &  \{ 3 \} &   0,1,-1 \\
2 & 4 & \{  4 \} &  0,1,0,-1 \\
2 & 6 &  \{  6 \} & 0,1,1,0,-1,-1  \\
3 & 3&  \{ 1,3 \}  & 0,1,1\\ 
3 &4 &  \{ 1,4 \} &   0,1,2,1 \\
3 & 6 & \{ 1,6 \} &  0, 1, 3, 4, 3,1 \\
4 & 4  & \{ 1,2,4  \} &   0,1,b,\pm 1        \\ 
4 &  5 & \{  5   \} &   0,1,1,-1,-1 \text{ or }    0,1,-1,1,-1   \text{ or }    0,1,-1,-1,1  \\ 
4 & 6  & \{ 1,2,3  \} &   0,1,b,1-b,b,1        \\ 
4 & 6  & \{ 1,2,6   \} &    0,1,b,b+1,b,1        \\ 
4 & 6  & \{  3, 6   \} &  0,1,b,0,-b,-1 \text{ or } 0,1,b,-2,-b,1         \\ 
4 & 8 & \{  8   \} &  0,1,b,1,0,-1,-b,-1   \text{ or }      0,1,b,-1,0,-1,-b,1   \\ 
4 & 10   & \{  10   \} &         0,1,b,-1,-b,0,-1,-b,1,b \text{ or }  0, 1, 1, 1, 1, 0, -1, -1, -1, -1     \\ 
4 & 12  & \{  3, 4   \} &    0, 1, 0, -1, 0, 1, 0, -1, 0, 1, 0, -1      \\ 
4 & 12  & \{  4, 6   \} &    0, 1, 1, -2, -3, 1, 4, 1, -3, -2, 1, 1   \\ 
4 & 12  & \{  12    \} &     0, 1, b, 0, b, -1, 0, -1, -b, 0, -b, 1    \text{ or }   0, 1, b, 2, b, 1, 0, -1, -b, -2, -b, -1     \\ 
  \end{array}\] 
  
 Evidently one could go on further but there is an explosion of possibilities!  One can also work through possibilities with $u_0\ne 0$, with again many possibilities.

\subsection{Reducible characteristic polynomials} 

 In section \ref{sec: LDSk=3} we saw that the characteristic polynomial of any non-degenerate LDS  of  order $3$ is reducible. We guess that the following holds (though with limited evidence): \smallskip

 \noindent \textbf{(Hyp 1)}\, The characteristic polynomial of any non-degenerate LDS    of odd order $k>1$ is reducible.
 
 \smallskip
 
 If this is true we can also partly classify the degenerate cases:
 
 \begin{lemma} Suppose that (Hyp 1) is true. If $(u_n)_{n\geq 0}$ is a degenerate LDS    of odd order $k>1$ with irreducible  characteristic polynomial $f(x)\in \mathbb Z[x]$ then $u_{\ell n}=0$ for all $n\geq 0$ for some integer $\ell>1$, and $f(x)=g(x^\ell)$ for some irreducible $g(x)\in  \mathbb Z[x]$.
 \end{lemma}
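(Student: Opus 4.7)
My plan is to combine (Hyp 1) with the oddness of $k$ to force all roots of $f$ into a single block under the equivalence $\alpha\sim\beta$ iff $\alpha/\beta$ is a root of unity, then extract a nontrivial multiplication-by-root-of-unity symmetry group $S$ of the root set and use it to write $f(x)=g(x^\ell)$ with $\ell:=|S|$. Arguing by induction on $k$, either $\ell=k$ (whence $g$ is linear and $u_{kn}=0$ is immediate) or the argument can be iterated on a derived LDS of smaller order. The main obstacle is showing that $\alpha$ cannot lie in any cyclotomic field: this is a parity argument specific to odd $k$, and it is what forces the restriction cocycle $\sigma\mapsto\sigma(\alpha)/\alpha$ to have a nontrivial image inside $S$.

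For the reduction to a single block, write the distinct roots as $\alpha_1,\dots,\alpha_k$ and let $M$ be the period. First treat the case $u_M\ne 0$ (the case $u_M=0$ is handled by Corollary~\ref{cor: Un=02}, which already places the zero set of $(u_n)$ on a nontrivial arithmetic progression). By Lemma~\ref{lem: Simplifying}, $v_n:=u_{Mn}/u_M$ is a LDS of period $1$, hence non-degenerate, and of order $k'=k/\ell$, where $\ell$ is the common size of the blocks in the partition of $\{\alpha_i\}$ by the equivalence above (sizes are equal by Galois transitivity on $\{\alpha_i\}$, coming from irreducibility of $f$); its characteristic polynomial is irreducible of degree $k'$ since Galois acts transitively on blocks. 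Both $k'$ and $\ell$ are odd; if $k'>1$ then (Hyp 1) is contradicted, forcing $k'=1$. Therefore every root of $f$ takes the form $\zeta_i\alpha$ for distinct roots of unity $\zeta_i$ with $\zeta_1=1$.

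Next, I claim $\alpha$ lies in no cyclotomic field: every odd-degree abelian extension of $\mathbb{Q}$ is totally real, because the complex-conjugation element in the Galois group of any cyclotomic field has order $2$ and so lies in every subgroup of odd index. With $\alpha$ real and all conjugates real, the only possible root-of-unity ratios $\alpha_i/\alpha$ would be $\pm 1$; but the presence of $-1$ as such a ratio forces the set of roots to be closed under negation by Galois transitivity, making $k$ even --- contradicting $k$ odd. Now set $N:=\mathbb{Q}(\zeta_i:i)$, $K:=\mathbb{Q}(\alpha)$, $L:=KN$, $T:=\text{Gal}(L/N)$, and $S:=\{\zeta\in\mu_\infty:\zeta\{\zeta_i\}=\{\zeta_i\}\}$. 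For $\sigma\in T$, $\sigma$ fixes each $\zeta_i$, so the cocycle $c(\sigma):=\sigma(\alpha)/\alpha$ collapses to an honest homomorphism $T\to\mu_\infty$ with image in $S$; its kernel is $\text{Gal}(L/NK)=1$, yielding an injection $T\hookrightarrow S$ and hence $|S|\ge[L:N]>1$ since $\alpha\notin N$.

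Setting $\ell:=|S|$, the $S$-cosets partition $\{\zeta_i\}$ into $k/\ell$ orbits of equal size, collapsing $\{\alpha_i^\ell\}$ to $k/\ell$ distinct values transitively permuted by Galois; thus $f(x)=g(x^\ell)$ with $g\in\mathbb{Z}[x]$ irreducible of degree $k/\ell$. If $\ell=k$ then $\{\zeta_i\}=S=\mu_k$, $f(x)=x^k-\alpha^k$ with $\alpha^k\in\mathbb{Z}$, $g(y)=y-\alpha^k$, and the direct computation $u_{kn}=\alpha^{kn}\sum_i c_i=\alpha^{kn}u_0=0$ completes the argument. Otherwise $\ell<k$: if $u_\ell=0$ then $u_\ell\mid u_{\ell n}$ already forces $u_{\ell n}=0$ for all $n$; if $u_\ell\ne 0$ then $(u_{\ell n}/u_\ell)$ is an integer LDS of odd order $k/\ell>1$ with irreducible characteristic polynomial $g$ and is degenerate (ratios of the $\alpha_i^\ell$ are roots of unity), so by induction on $k$ there is $\ell'>1$ with $g(y)=h(y^{\ell'})$ and $u_{\ell\ell' n}/u_\ell=0$ for all $n$; then $\ell\cdot\ell'$ satisfies both conclusions of the lemma.
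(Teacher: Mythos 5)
There is a genuine gap: you establish the factorization $f(x)=g(x^\ell)$ only under the assumption $u_M\ne 0$, while in the case $u_M=0$ you invoke Corollary \ref{cor: Un=02} and obtain only the vanishing conclusion, not the factorization. That second case cannot be dismissed in a parenthetical, because it is the only case that actually occurs. Indeed, your own reduction shows that $u_M\ne0$ forces $k'=1$, i.e.\ every root is $\zeta_i\alpha$ with $\zeta_i^M=1$ (the $\zeta_i$ are ratios of characteristic roots, hence $M$th roots of unity by the definition of the period); but then $u_{Mn}=\alpha^{Mn}\sum_i c_i=\alpha^{Mn}u_0=0$, since the sequence cannot be periodic (an irreducible characteristic polynomial of a periodic LDS is cyclotomic, of degree $1$ or an even number, never odd and $>1$) and so $u_0=0$. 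This contradicts $u_M\ne0$, so the branch in which you prove $f(x)=g(x^\ell)$ is vacuous, and the branch carrying the content of the lemma --- e.g.\ $f(x)=x^3-A$ with $u_{3n}=0$ --- is exactly the one you skip. Worse, in that branch the single-block structure underlying your cocycle argument need not hold: take $g$ an irreducible cubic whose roots have multiplicatively independent ratios and $f(x)=g(x^3)$; the nine roots fall into three root-of-unity-ratio blocks, so the homomorphism $\sigma\mapsto\sigma(\alpha)/\alpha$ into $\mu_\infty$ is not even defined on all of $\mathrm{Gal}(L/N)$.

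The paper closes precisely this gap with Ferguson's theorem \cite{Ferg}, which needs no case split and no use of (Hyp 1): Galois transitivity partitions the roots into classes of equal size $m\ge2$ with root-of-unity ratios; odd degree forces a real root $r$, so there are $\ell\ge m$ roots on the circle $|z|=|r|$, and Ferguson's theorem then gives $f(x)=g(x^\ell)$ with $g$ irreducible, unconditionally. Only afterwards is (Hyp 1) applied, to $(u_{\ell n})_{n\ge0}$ --- either identically zero, or a non-degenerate LDS of odd order $k/\ell$ with irreducible characteristic polynomial $g$ --- to force $g$ linear or $u_{\ell n}\equiv0$. Your concluding induction is a workable substitute for that last step, and your totally-real observation about odd-degree abelian extensions is correct; but without Ferguson's theorem (or some replacement valid when the roots form several blocks) the factorization half of the lemma remains unproved.
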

 
 This is exactly what we saw in section \ref{sec: LDSk=3} in the case $k=3$.
 
\begin{proof} Let $G$ be  the Galois group of the splitting field extension of $f$. There exist distinct roots $\alpha$ and $\beta$ of $f(x)$ for which  $\alpha/\beta$ is a root of unity, say $\alpha=\zeta \beta$.  
Now $\alpha$ is Galois conjugate to every root of $f$, so if $\zeta_1\alpha,\dots,\zeta_m\alpha$ are the 
roots of $f$ that differ from $\alpha$ by a root of unity then there exists $\sigma_i\in G$ with $\alpha^{\sigma_i}=\zeta_i\alpha$. 
For any other root $\gamma$ there exists $\tau\in G$ with $\gamma=\alpha^\tau$ 
and then $\gamma^{\tau^{-1}\sigma_i \tau} = \alpha^{\sigma_i \tau}=(\zeta_i\alpha)^\tau=\zeta_i^\tau\gamma$, which is a  root of $f$ that differs from $\gamma$ by a root of unity. This argument implies that   we can partition the roots of $f$ into   sets of size $m$, such that the ratio of any two roots in the same set is a root of unity.   
 
 Now since $f$ has odd degree at least one root is real, call it $r$, and so there are   $\ell\geq m$ roots of $f(x)$ on the circle $|z|=|r|$.  Then Ferguson  \cite{Ferg} proved that 
$f(x)=g(x^\ell)$ for some $g(x)\in  \mathbb Z[x]$.
Moreover $g$ must be irreducible and of odd degree as $f$ is, and no two distinct roots of $g$ can differ by roots of unity.

Now $(u_{\ell n})_{n\geq 0}$ is a LDS and so either always equals $0$ or has  characteristic polynomial $g(x)$. 
In the latter case, (Hyp 1) implies that   $(u_{\ell n})_{n\geq 0}$ must have order 1, and so   $g(x)$ is linear, say $g(x)=x-a$ and so $f(x)=x^\ell-a$. Therefore $f$ has order $\ell$ and so $\ell=k$. But then the recurrence relation is
$u_{n+\ell}=au_n$ and so $u_{\ell n}=0$ for all $n\geq 0$ as $u_0=0$. This proves the claimed result.
 \end{proof}

For $k=2$   most LDS's have  irreducible characteristic polynomials since most Lucas sequences, like the Fibonacci numbers, come from an irreducible quadratic.
This is also true when $k=2^r$ since if $u_n$ is a product of $r$ Lucas sequences with roots say $\alpha_{i,1}, \alpha_{i,2}$  where each $(\alpha_{i,1}- \alpha_{i,2})^2=p_i$   a distinct prime, then the roots of  the characteristic polynomial of $u_n$ are $\alpha_{1,a_1}\cdots \alpha_{r,a_r}$ and these can all be mapped to each other by   Galois elements from each separate quadratic extension.

 \section{Large gcds among linear recurrence sequences} \label{sec: Future}

 
 In Theorem \ref{thm: main4} we've shown that if 
 $\gcd(u_n,u_{2n},\dots,u_{(k-1)n})>e^{\epsilon n}$ then there is an ``algebraic reason'' for it; that is, $u_n$ is divisible by an  exponentially-fast growing LDS.  One might guess this is true even if $(u_n,u_{2n})>e^{\epsilon n}$ but our techniques do not seem to be able to work with this weaker hypothesis (though see Proposition \ref{prop: Key result}).
 
 We might also guess that if $\max_{m<n} (u_m,u_n)>e^{\epsilon n}$  then  $u_n$ is divisible by an  exponentially-fast growing LDS.  Moreover if $(u_n)_{n\geq 0}$ and $(v_n)_{n\geq 0}$ are distinct linear recurrence sequences in $\mathbb Z$, and $\max_{m\leq n} (u_m,v_n)>e^{\epsilon n}$  then  $u_n$ and $v_n$ are both divisible by the same   exponentially-fast growing LDS.  Corvaja and Zannier's \eqref{eq: CZ-ineq} implies this holds if $(u_n)_{n\geq 0}$ and $(v_n)_{n\geq 0}$ are both LDS's:
 \medskip

\noindent \textbf{Theorem \ref{thm: main5}}
\emph{ If $(u_n)_{n\geq 0}$ and $(v_n)_{n\geq 0}$ are linear division sequences in the integers, and there are arbitrarily large integers $m,n$ for which
 $\gcd(u_m,v_n)>e^{\epsilon (m+n)}$ then there exists an exponential or polynomially generated LDS $(w_n)_{n\geq 0}$ in the integers and an integer $q\geq 1$ such that $w_n$ divides $(u_{qn},v_{qn})$.}
  \medskip
 
 \begin{proof}
 By Theorem \ref{thm: main} we know that $u_n$ and $v_n$ are each the products of   an exponential  LDS  and a finite number of polynomially generated LDS's up to a multiplicative factor of $n^{O(1)}$. If they have an exponential LDS in common then we are done.
 If not then we can disregard the exponential LDS since an exponential LDS and a polynomially generated LDS cannot have a fast growing gcd by Lemma \ref{PolyGenDivByp}. Since $u_n$ and $v_n$ are each the product of finite number of polynomially generated LDS there must be polynomially generated LDS's $U_n$ and $V_n$ dividing $u_n$ and 
 $v_n$ respectively, for which $\gcd(U_m,V_n)>e^{\epsilon' (m+n)}$  for some $\epsilon'$ depending only on $\epsilon$ and the number of LDS factors in $u_nv_n$.  Now any polynomially generated LDS is the lcm of factors of the form $\gamma^n-1$. Hence there exist $\gamma,\eta\in K$ for which $(\gamma^m-1)^*$ divides $u_m$, and 
 $(\eta^n-1)^*$ divides $v_n$ such that 
 \[
 \max_{m\leq n} N(\gamma^m-1,\eta^n-1)>e^{\epsilon'' n}.
 \]
 Taking $u=\gamma^m, v=\eta^n$ and $S$ to include all the of prime ideal factors of the numerator and denominator of $\eta$ and $\gamma$, we deduce that $\gamma^m$ and $\eta^n$ must be multiplicatively dependent by    \eqref{eq: CZ-ineq}.    Therefore there exists an integer $q$ such that $\gamma^q$ and $\eta^q$ are both powers of some 
 $\tau$, and $(\tau^n-1)^*$ divides both $u_{qn}$ and $v_{qn}$. By multiplying together a suitable set of conjugates of $(\tau^n-1)^*$ we deduce (as described in the introduction) that some Lucas sequence $(w_n)_{n\geq 0}$ (perhaps of order $>1$) divides both $u_{qn}$ and $v_{qn}$.
 \end{proof}

\subsection{Levin's generalization of Corvaja-Zannier's \eqref{eq: CZ-ineq} }
The great thing about \eqref{eq: CZ-ineq} is that it allows one to attack all the binomial factors of the linear recurrence sequence (and monomial factors are easy), which is how we obtained Theorem \ref{thm: main5} which focuses on products of monomials and binomials. 
  Aaron Levin \cite{Lev} recently provided a way to attack multinomial factors and this has had several interesting consequences:
  
Let $f(x_1,\dots,x_m)$,   $g(x_1,\dots,x_m)\in \overline{\mathbb Q}[x_1,\dots,x_m]$ be non-constant coprime polynomials, which do not both vanish at $(0,\dots,0)$. If $u_1,\dots,u_m$ are $S$-units for which
\begin{equation} \label{eq: CZ-ineq2}
|N((f(u_1,\dots,u_m),g(u_1,\dots,u_m)))| \gg_{S,\epsilon} \big( \max_i H(u_i) \big)^\epsilon
\end{equation}
(where $H(\cdot)$ is the height defined after the statement of \eqref{eq: CZ-ineq})
then $(u_1,\dots,u_m)$ must satisfy one of a finite number of sets of multiplicative equations of the form
\[
u_1^{a_{i,1}}\cdots u_m^{a_{i,m}} = A_i \text{ for } 1\leq i\leq \ell,
\]
 where each $a_{i,j}$ is an integer and each $A_i$ is an $S$-unit.\footnote{Technically, ``$(u_1,\dots,u_m)$
belongs to a finite union of translates of proper algebraic subgroups''.} (This was proved for $m=2$ by Corvaja and Zannier in  \eqref{eq: CZ-ineq}.)

 \subsection{Consequences of Levin's generalization  for linear recurrence sequences}
  
 Grieve and Wang \cite[Theorem 1.8(a)]{GrW} showed that  if $(u_n)_{n\geq 0}$ and $(v_n)_{n\geq 0}$ are any two   linear recurrences in $\mathbb Z$  for which $(u_n,v_n)>e^{\epsilon n}$ for  infinitely many $n$, then there exists 
a linear recurrence sequence $(w_n)_{n\geq 0}$ such that   $w_n$ divides $(u_n,v_n)$ for all $n$ in some arithmetic progression. And if $(u_n,v_n)>e^{\epsilon n}$ for all sufficiently large $n$ then $w_n$ divides $(u_n,v_n)$ for all $n\geq 1$.  (See also the inspiring works of Fuchs \cite{Fuc}.) Taking $v_n=u_{2n}$ we see that if
$(u_n,u_{2n})>e^{\epsilon n}$ for  all sufficiently large $n$ then there is  a linear recurrence $(w_n)_{n\geq 0}$ in $\mathbb Z$ which divides both $u_n$ and $u_{2n}$. Analogous to Theorem \ref{thm: main5} we guess that one can force $w_n$ to always divide $w_{2n}$ (but have not done so).

Levin \cite[Theorem 1.11]{Lev} show that if $(u_n)_{n\geq 0}$ and $(v_n)_{n\geq 0}$ are any two simple  linear recurrences in $\mathbb Z$, for which there are arbitrarily large $m\leq n$ with   $(u_m,v_n)>e^{\epsilon n}$ then there exists a linear recurrence sequence $(w_n)_{n\geq 0}$ in the integers, 
 and arithmetic progressions $an+b, cn+d$ such that $w_n$ divides $u_{an+b}$ and $v_{cn+d}$ for all integers $n\geq 1$.
 This does not extend to all linear recurrence sequences in $\mathbb Z$ (that is, those that are not simple) as one can see from the example:
\begin{equation} \label{eq: closemiss}
u_n=2^n-1, v_n=n2^n-1 \text{ so that }  u_{2^r+r} = v_{2^r};
\end{equation}
moreover, if $(a,c)=1$ then $(u_{at+b},v_{ct+d}) $ divides $(ct+d)^a -2^{bc-ad}$ which does not grow exponentially.
On the other hand, Luca \cite{FlLu} showed that if $u_n$ and $v_n$ have characteristic polynomials $(x-1)^q(x-a)^r$ and 
  $(x-1)^s(x-b)^t$ where $q,r,s,t\geq 1$ and $a$ and $b$ are multiplicatively independent integers then $(u_m,v_n)<e^{\epsilon n}$ whenever $m\leq n$.

Recently Xiao \cite{Xiao} showed that the only  examples with large gcds are like \eqref{eq: closemiss}: 
Given any two linear recurrences $(u_n)_{n\geq 0}$ and $(v_n)_{n\geq 0}$ there is a finite set of coprime positive integer pairs $(a_i,b_i)_{i=1,\dots,r}$ such that all sufficiently large solutions $m,n$ to $(u_m,v_n)>e^{\epsilon n}$ satisfy
$0<m-a_ik, n-b_ik \ll \log k$ for some integer $k$. It may well be that the only such pairs take the form
$a \ell^r +b r+c, A\ell^r +B r+C$ for constants $a,b,c,A,B,C$ and where $\ell$ is a characteristic root of both $(u_n)_{n\geq 0}$ and $(v_n)_{n\geq 0}$.


 \bibliographystyle{plain}

\appendix

\section{Applying the \v Cebotarev density theorem} \label{App A}

\centerline{by \textsc{Carlo Pagano}}
\medskip

We wish to show the following result:

\begin{prop} \label{prop; cebapple} Suppose that  $\alpha$ and $\beta$ are non-zero,   multiplicatively independent algebraic numbers in $K$. There exist primes $q$ and $r$ such that for a positive proportion of prime ideals $P$ of $K$ with $NP\equiv 1 \pmod {qr}$, we have  $\alpha$ is a $q$th power mod $P$, but not $\beta$, and $\beta$ is an $r$th power mod $P$, but not $\alpha$,
\end{prop}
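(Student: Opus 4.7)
I will apply Chebotarev's density theorem to
\[
L := K\bigl(\zeta_{qr},\,\alpha^{1/q},\,\beta^{1/q},\,\alpha^{1/r},\,\beta^{1/r}\bigr),\qquad M:=K(\zeta_{qr}),
\]
for carefully chosen distinct primes $q,r$. The action of $\text{Gal}(L/M)$ on the four radicals gives an embedding $\text{Gal}(L/M)\hookrightarrow(\Z/q)^2\times(\Z/r)^2$, and the first task is to arrange that this is an isomorphism. Because $M/K$ is Galois, $\text{Gal}(L/M)\triangleleft\text{Gal}(L/K)$, and the induced action of $\text{Gal}(M/K)\hookrightarrow(\Z/qr)^\times$ on $\text{Gal}(L/M)$ is coordinatewise scalar multiplication on each of the four Kummer characters (the scalars for $\alpha^{1/q}$ and $\beta^{1/q}$ come from the image in $(\Z/q)^\times$, those for $\alpha^{1/r}$ and $\beta^{1/r}$ from the image in $(\Z/r)^\times$).

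Choose distinct primes $q,r$ with $q\nmid r-1$ and $r\nmid q-1$, so $[M:K]$ divides $(q-1)(r-1)$ and is coprime to $qr$. By Kummer theory (together with the automatic disjointness of the $q$- and $r$-towers over $M$, since their degrees are coprime), it suffices to show that $\alpha,\beta$ remain independent in $M^*/(M^*)^q$ and in $M^*/(M^*)^r$. For the $q$-case, suppose $\alpha^a\beta^b=\gamma^q$ in $M^*$ with $0\le a,b<q$ and $(a,b)\ne(0,0)$. Applying $N_{M/K}$ and using $\alpha^a\beta^b\in K^*$ gives $(\alpha^a\beta^b)^{[M:K]}=N_{M/K}(\gamma)^q$ in $K^*$; picking $u,v\in\Z$ with $u[M:K]+vq=1$ then displays $\alpha^a\beta^b$ as a $q$-th power in $K^*$. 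So every $M$-relation descends to a $K$-relation, and Northcott's theorem rules these out for all but finitely many $q$: a hypothetical $\tilde\gamma\in K^*$ with $\tilde\gamma^q=\alpha^a\beta^b$ has height and degree bounded independently of $q$, hence ranges over a finite set, and for each such $\tilde\gamma$ the $\Q$-linear independence of $\alpha,\beta$ in $K^*\otimes\Q$ leaves only finitely many $q$ possible. The same applies to $r$, so for $q,r$ sufficiently large one obtains $\text{Gal}(L/M)\cong(\Z/q)^2\times(\Z/r)^2$.

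Now let $S\subset\text{Gal}(L/M)$ consist of elements $(0,b,c,0)$ with $b\in(\Z/q)^\times$ and $c\in(\Z/r)^\times$. Each $\sigma\in S$ fixes $\alpha^{1/q}$ and $\beta^{1/r}$ but moves $\beta^{1/q}$ and $\alpha^{1/r}$. The coordinatewise scalar action of $\text{Gal}(M/K)$ on $\text{Gal}(L/M)$ preserves the vanishing and non-vanishing of each coordinate, so $S$ is a $\text{Gal}(L/K)$-stable union of conjugacy classes. For an unramified prime $P$ of $K$ with $\text{Frob}_P\in S$, the inclusion $\text{Frob}_P\in\text{Gal}(L/M)$ is equivalent to $\text{Frob}_P$ fixing $\zeta_{qr}$, i.e.\ $NP\equiv1\pmod{qr}$, while the Kummer components translate exactly to: $\alpha$ is a $q$-th power mod $P$ but $\beta$ is not, and $\beta$ is an $r$-th power mod $P$ but $\alpha$ is not. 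Chebotarev then gives such $P$ in absolute density $(q-1)(r-1)/\bigl(q^2r^2\,[M:K]\bigr)$, a positive fraction of the density $1/[M:K]$ of primes with $NP\equiv1\pmod{qr}$, which completes the proof.

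The principal obstacle is the Kummer step: ensuring that multiplicative independence of $\alpha,\beta$ in $K^*$ is not spoiled modulo $q$-th powers inside the cyclotomic field $M$. The coprimality $\gcd([M:K],q)=1$, engineered by insisting $q\nmid r-1$, is precisely what allows the norm argument to collapse $M$-relations into $K$-relations, after which Northcott disposes of all but finitely many exceptional primes.
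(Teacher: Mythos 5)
Your argument is correct, and its overall architecture coincides with the paper's: both apply \v Cebotarev to the splitting field $L=K(\zeta_{qr},\alpha^{1/q},\beta^{1/q},\alpha^{1/r},\beta^{1/r})$, both must first show the Kummer part of $\operatorname{Gal}(L/K)$ has full order $q^2r^2$, and both then locate the Frobenius classes that fix $\alpha^{1/q}$ and $\beta^{1/r}$ while moving $\beta^{1/q}$ and $\alpha^{1/r}$. Where you genuinely diverge is in the key finiteness lemma. The paper proves that $\alpha,\beta$ stay independent in $K^*/(K^*)^p$ for all but finitely many $p$ by factoring the fractional ideals $(\alpha),(\beta)$, comparing $2\times2$ minors of the exponent matrices, and then disposing of the residual unit via a basis of $\mathcal O_K^*$; you instead descend a hypothetical relation from $M=K(\zeta_{qr})$ to $K$ by the norm (using $\gcd([M:K],qr)=1$, engineered by $q\nmid r-1$, $r\nmid q-1$) and then invoke Northcott to confine the putative $q$-th root to a finite set, after which independence in $K^*\otimes\Q$ pins $q$ to divisors of a fixed integer. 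Your route buys a cleaner treatment of the cyclotomic layer (the paper applies Kummer theory over $K$ itself, which strictly requires exactly the descent you make explicit) at the cost of quoting Northcott; the paper's ideal-theoretic computation is more elementary and effective. You also avoid the paper's explicit semidirect-product isomorphism $G\cong H$ by working only with the normal subgroup $\operatorname{Gal}(L/M)$ and observing that your target set $S$ is conjugation-stable because conjugation scales each Kummer coordinate by a unit --- a lighter bookkeeping that yields the same conclusion (indeed a whole union of classes rather than the paper's single element). The one step worth a sentence more of justification is the claim that each fixed $\tilde\gamma$ admits only finitely many $q$: one should note that the relation forces $\tilde\gamma$ into the $\Q$-span of $\alpha,\beta$ in $K^*\otimes\Q$, say $\tilde\gamma^m=\zeta\alpha^c\beta^d$, whence $ma=cq$, $mb=dq$ and $q\mid m$ unless $a=b=0$.
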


Let $L$ be the splitting field extension over $K$ of the polynomial 
\[
f(x):=(x^q-\alpha)(x^q-\beta)(x^{r}-\alpha)(x^{r}-\beta)
\]
with Galois group $G$.  If there is an element of $G$ which acts trivially on the roots of the first and fourth polynomial, and as a full cycle on the roots of the second and third polynomial then the theorem follows from  \v Cebotarev's density theorem.

\subsection{Finding the primes $q$ and $r$}
 
\begin{lemma}  If  $1,\alpha$ and $\beta$ are non-zero multiplicatively independent algebraic numbers in $K$ then  there are only finitely many primes $p$ for which  $1,\alpha, \beta$  are 
multiplicatively dependent in $(K^*)/(K^*)^p$.
\end{lemma}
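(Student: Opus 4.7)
The plan is to reduce the problem to a statement about a finitely generated free abelian group via the $S$-unit theorem, and then use that the saturation (division hull) of a rank-$2$ subgroup in a free abelian group of finite rank is a rank-$2$ overgroup of finite index.

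First, let $S$ be the finite set of places of $K$ consisting of all archimedean places together with every prime ideal $\mathfrak p$ at which $v_\mathfrak p(\alpha)\neq 0$ or $v_\mathfrak p(\beta)\neq 0$. Then $\alpha,\beta\in \mathcal O_{K,S}^*$, and by Dirichlet's $S$-unit theorem, $\mathcal O_{K,S}^*/\mu_K$ is free abelian of some finite rank $r_S$. Write $\bar\alpha,\bar\beta$ for the images of $\alpha,\beta$ in this free abelian group; by the multiplicative independence of $\alpha,\beta$, the pair $\bar\alpha,\bar\beta$ is $\mathbb Z$-linearly independent. Let
\[
\Gamma' := \{\bar x \in \mathcal O_{K,S}^*/\mu_K : \bar x^n \in \langle \bar\alpha,\bar\beta\rangle \text{ for some } n\ge 1\}
\]
be the saturation of $\langle\bar\alpha,\bar\beta\rangle$ in the free abelian group $\mathcal O_{K,S}^*/\mu_K$. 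Since this saturation is itself a rank-$2$ subgroup of a finite-rank free abelian group containing $\langle\bar\alpha,\bar\beta\rangle$, the index $d:=[\Gamma':\langle\bar\alpha,\bar\beta\rangle]$ is finite.

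Next, I claim that whenever $p$ is a prime with $p\nmid d$, the images of $\alpha$ and $\beta$ in $K^*/(K^*)^p$ are linearly independent. Suppose $\alpha^a\beta^b=\gamma^p$ for some $\gamma\in K^*$ and integers $0\le a,b<p$. For each prime $\mathfrak p \notin S$, $v_\mathfrak p(\alpha)=v_\mathfrak p(\beta)=0$, so $pv_\mathfrak p(\gamma)=0$, hence $\gamma\in \mathcal O_{K,S}^*$. Passing to $\mathcal O_{K,S}^*/\mu_K$ we obtain $\bar\gamma^p=\bar\alpha^a\bar\beta^b\in\langle\bar\alpha,\bar\beta\rangle$, so $\bar\gamma\in\Gamma'$. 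Therefore $\bar\gamma^d=\bar\alpha^{a_0}\bar\beta^{b_0}$ for some integers $a_0,b_0$. Computing $\bar\gamma^{pd}$ in two ways gives
\[
\bar\alpha^{ad}\bar\beta^{bd}=(\bar\alpha^a\bar\beta^b)^d=\bar\gamma^{pd}=(\bar\gamma^d)^p=\bar\alpha^{pa_0}\bar\beta^{pb_0},
\]
and the $\mathbb Z$-independence of $\bar\alpha,\bar\beta$ in the torsion-free group $\mathcal O_{K,S}^*/\mu_K$ forces $ad=pa_0$ and $bd=pb_0$. Since $\gcd(p,d)=1$, we deduce $p\mid a$ and $p\mid b$, and then $0\le a,b<p$ forces $a=b=0$. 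Thus the only primes for which a nontrivial relation can exist are the prime divisors of $d$, a finite set.

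The only genuine input used beyond pure module theory is the finite generation of $\mathcal O_{K,S}^*$, so there is no real obstacle: the one point that could look subtle is the descent from $K^*$ to $\mathcal O_{K,S}^*$ (i.e.\ showing that any $\gamma$ with $\gamma^p\in\langle\alpha,\beta\rangle$ automatically lies in $\mathcal O_{K,S}^*$), but this is immediate from valuations at primes outside $S$. Everything else is standard linear algebra over $\mathbb Z$ applied to the saturation in a free abelian group of finite rank.
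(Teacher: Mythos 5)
Your proof is correct. It takes a different route from the paper's, though the underlying inputs coincide: the paper argues directly on the two ``components'' of the multiplicative group separately --- it factors the ideals $(\alpha)$ and $(\beta)$ and shows that any relation mod $p$ forces $p$ to divide one of the finitely many nonzero $2\times 2$ minors $a_ib_j-a_jb_i$ of the exponent matrix (unless all minors vanish, in which case the relation descends to a genuine unit $\gamma=\alpha^m\beta^{-n}$), and then it expands that unit in a $\mathbb{Z}$-basis of $\mathcal O_K^*$ and shows $p$ must divide one of the finitely many nonzero exponents. You instead invoke Dirichlet's $S$-unit theorem to put $\alpha,\beta$ into a single finitely generated free abelian group $\mathcal O_{K,S}^*/\mu_K$, and reduce everything to one piece of $\mathbb{Z}$-linear algebra: the saturation of $\langle\bar\alpha,\bar\beta\rangle$ has finite index $d$, and the bad primes are exactly divisors of $d$. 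This packaging handles the ideal part and the unit part in one stroke, avoids the case analysis on vanishing minors, and yields a single explicit invariant controlling the exceptional primes; the paper's version is more hands-on but has to treat the two sources of $p$-divisibility separately. The only points you pass over lightly are routine: that $\bar\alpha,\bar\beta$ remain independent after killing the roots of unity (if $\alpha^m\beta^n$ is a root of unity of order $w$ then $\alpha^{mw}\beta^{nw}=1$, so $m=n=0$), and the reduction of a general relation to exponents $0\le a,b<p$ by absorbing $p$th powers of $\alpha,\beta$ into $\gamma$. Both are immediate, so the argument stands as written.
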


\begin{proof} Suppose that $\alpha^r\beta^s=\lambda^p$ for some prime $p$ with $r$ and $s$ not both $0 \pmod p$.
We factor the fractional ideals $(\alpha)=P_1^{a_1}\cdots P_k^{a_k}, (\beta)=P_1^{b_1}\cdots P_k^{b_k}$ into prime ideals with each $a_j,b_j\in \mathbb Z$ not both $0$,  so that each $ra_i+sb_i\equiv 0 \pmod p$.   
We suppose prime $p$ does not divide any non-zero $a_ib_j-a_jb_i$.

Taking any two  congruences $ra_i+sb_i\equiv 0 \pmod p$ we deduce that every $a_ib_j-a_jb_i\equiv 0 \pmod p$, since $(r,s)^T\not\equiv 0 \pmod p$. But then $a_ib_j-a_jb_i=0$ and so if $ma_1=nb_1$ with $(m,n)=1$ then
$ma_i=nb_i$ for all $i$.  Therefore  
$\gamma=\alpha^m\beta^{-n}$ is a unit, and if $r=\ell m$ then $p\nmid \ell$ so that $\gamma$ is a $p$th power.

If $u_1,\cdots,u_m$ is a basis for the units of $K$ of infinite order, then we can write
$\gamma=\zeta u_1^{c_1}\cdots u_m^{c_m}$ where each $c_i\in \mathbb Z$ and $\zeta$ is a root of unity. We suppose prime $p$ does not divide any non-zero $c_j$, but then $\gamma$ cannot be a $p$th power unless each $c_j=0$.
Therefore $\gamma=\alpha^m\beta^{-n}$ is a root of unity, say an $\ell$th root of unity, and so
$\alpha^{\ell m}=\beta^{\ell n}$, contradicting the hypothesis.
\end{proof}

 
Let $L_p=K(\alpha^{1/p},\beta^{1/p})$.
If $1,\alpha, \beta$  are multiplicatively independent in   $(K^*)/(K^*)^p$  then $[L_p:K]=p^2$ by Kummer theory.
 Let $\zeta_n=e^{2i\pi/n}$ be a primitive $n$-th root of unity; if $(n,\text{Disc}(K))=1$ then $\mathbb Q(\zeta_n)\cap K=\mathbb Q$ and  so $[K(\zeta_n):K]=\phi(n)$.   
 
For any primes $q\ne r$ with   $qr\nmid (q-1)(r-1)\text{Disc}(K)$, we see that 
\[ L=K(\alpha^{1/q},\beta^{1/q}, \zeta_q, \alpha^{1/r},  \beta^{1/r},\zeta_r)\]
 is generated by $L_q, L_r$ and $K(\zeta_{qr})$; since their extension degrees over $K$ are pairwise coprime 
 we deduce that $[L:K]=[L_q:K]\cdot [L_r:K] \cdot [K(\zeta_{qr}):K]= q^2(q-1)r^2(r-1)$.

\subsection{The key group structure}  
 Fix primes $q$ and $r$ as above. Let $H$ be the  finite group
\[
H:=(\mathbb{Z}/q\mathbb{Z})^2 \times (\mathbb{Z}/r\mathbb{Z})^2 \rtimes ((\mathbb{Z}/q\mathbb{Z})^* \times (\mathbb{Z}/r\mathbb{Z})^*),
\]
defined by the set of  quadruples $(a,A,b,B)\in H$ where the group law is given by  
\[
(a,A,b,B)\ast (a',A',b',B'):=(ba'+a,BA'+A,bb',BB').
\]

We now define a map $\phi: G\to H$; Let $u^q=U^r=\alpha$ and $v^q=V^r=\beta$. For each  $\sigma\in G$ we define $(a(\sigma),A(\sigma),b(\sigma),B(\sigma))\in H$, by  first obtaining 
$a(\sigma)$ and $b(\sigma)$ from
\[
\sigma(u)=\zeta_q^{a(\sigma)_1} u, \sigma(v)=\zeta_q^{a(\sigma)_2} v \text{ and } \sigma(\zeta_q)=\zeta_q^{b(\sigma)};
\]
and then defining $A(\sigma)$ and $B(\sigma)$ analogously  using the prime $r$.

\begin{prop} \label{injective}
The map $\phi$ is a  group isomorphism.
\end{prop}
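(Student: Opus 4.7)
My plan is to verify that $\phi$ is a well-defined homomorphism and then use an order count to upgrade injectivity to bijectivity. The structural insight is that $H$ has been cooked up precisely so that the semidirect product law matches how $\mathrm{Gal}(L/K)$ acts on the Kummer generators $u,v,U,V$ through the cyclotomic characters $b,B$.

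First I would check that $\phi$ is well-defined. The exponents $a(\sigma)_i, A(\sigma)_i$ live in $\mathbb{Z}/q\mathbb{Z}$ and $\mathbb{Z}/r\mathbb{Z}$ respectively because each $\sigma$ must permute the $q$-th roots of $\alpha$ and $\beta$ (respectively the $r$-th roots), and $b(\sigma)\in (\mathbb{Z}/q\mathbb{Z})^*, B(\sigma)\in (\mathbb{Z}/r\mathbb{Z})^*$ by the standard cyclotomic character.

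Next I would verify the homomorphism property by direct computation. For $\sigma,\tau\in G$,
\[
\sigma\tau(u) = \sigma\bigl(\zeta_q^{a(\tau)_1}u\bigr) = \sigma(\zeta_q)^{a(\tau)_1}\sigma(u) = \zeta_q^{b(\sigma)a(\tau)_1+a(\sigma)_1}\,u,
\]
so $a(\sigma\tau)_1=b(\sigma)a(\tau)_1+a(\sigma)_1$, which is precisely the first coordinate of the semidirect product law. The same calculation with $v$ gives the second coordinate, and the analogous computation at the prime $r$ gives the coordinates $A$. Finally $b(\sigma\tau)=b(\sigma)b(\tau)$ and $B(\sigma\tau)=B(\sigma)B(\tau)$ because $\sigma\tau(\zeta_q)=\zeta_q^{b(\sigma)b(\tau)}$ (similarly for $\zeta_r$). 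So $\phi$ is a group homomorphism.

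For injectivity: if $\phi(\sigma)$ is the identity of $H$, namely $(0,0,1,1)$, then $\sigma$ fixes each of $\zeta_q,\zeta_r,u,v,U,V$, and these together generate $L$ over $K$, so $\sigma=1$. Finally, I would combine this with the order computation already carried out in the excerpt: we know $[L:K]=q^2(q-1)r^2(r-1)$, and on the other hand $|H| = q^2\cdot r^2\cdot (q-1)(r-1)$ from the definition as a semidirect product. Since $|G|=|H|$ and $\phi$ is an injective homomorphism, it must be an isomorphism. The only point requiring any care is the bookkeeping of which cyclotomic character twists which Kummer coordinate; no serious obstacle arises.
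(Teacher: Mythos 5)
Your proposal is correct and follows essentially the same route as the paper: the paper also notes the homomorphism property (which you verify in more detail via the semidirect-product computation), shows injectivity by observing that the kernel fixes the generators $u,v,U,V,\zeta_q,\zeta_r$ of $L/K$, and concludes surjectivity from the order count $|G|=[L:K]=q^2(q-1)r^2(r-1)=|H|$.
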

\begin{proof} $\phi$ is evidently a group homomorphism. It is injective since 
if $\sigma\in \ker \phi$ then $\sigma(u)= u, \sigma(v)= v $ and $ \sigma(\zeta_q)=\zeta_q$, and similarly
$\sigma(U)= U, \sigma(V)= V$ and $ \sigma(\zeta_r)=\zeta_r$, so that $\sigma$ must be the identity map. 
We deduce that it is an isomorphism since $|G|= [L:K]=q^2(q-1)r^2(r-1)=|H|$.
\end{proof}

\subsection{Completing the proof}

Let $P$ be a prime ideal in $\mathcal{O}_K$ (the ring of integers of $K$) not dividing the discriminant. Fix a prime ideal $\widetilde{P}$ in $\mathcal{O}_L$, lying above $P$. There exists a unique  $\sigma:=\text{Art}(\widetilde{P},L/K)\in G$, the \emph{Artin element}, for which
\[
\sigma(\widetilde{P})=\widetilde{P} \text{ and } \sigma(x) \equiv x^{NP} \pmod {\widetilde{P}} \text{ for all } x\in \mathcal{O}_L.
\]
If $\widetilde{P}'$ is another prime ideal of $\mathcal{O}_L$ lying above $P$, then $\text{Art}(\widetilde{P}',L/K)$ 
is a conjugate  of $\text{Art}(\widetilde{P},L/K)$ in $G$. The  \v Cebotarev density theorem \cite[Th\'eor\`eme B.32]{Hara} implies that for any $\sigma\in G$, a positive proportion of   prime ideals $P$ of $\mathcal{O}_K$  are unramified in $L$, and have a prime ideal 
$\widetilde{P}$ of $L$, lying above $P$, for which $\text{Art}(\widetilde{P},L/K)=\sigma$.


\begin{proof} [Proof of Proposition \ref{prop; cebapple}]
Since  $((0,1),(1,0),1,1)\in H$ which is isomorphic to $G$, the \v Cebotarev density theorem implies that 
for a positive proportion of   prime ideals $P$ of $\mathcal{O}_K$ (unramified in $L$) we have 
$\sigma:=\text{Art}(\widetilde{P},L/K)=((0,1),(1,0),1,1)$  for some  prime ideal 
$\widetilde{P}$ of $L$, lying above $P$.

For the prime $q$ this implies that $\sigma(\zeta_q)=\zeta_q,  \sigma(u)=u$ and $\sigma(v)=\zeta_qv$.
Now $\sigma(\zeta_q)=\zeta_q$ implies that  $NP \equiv 1 \pmod {q}$ and so all the $q$th roots of unity in $\overline{\mathcal{O}_K/P}$  lie in  the finite field $\mathcal{O}_K/P$.

We have $u=\sigma(u)\equiv u^{NP}  \pmod {\widetilde{P}}$ and we get the same congurence for any $\widetilde{P}$ lying above $P$ after conjugating, so that $u^{NP} \equiv u \pmod P$. Therefore $u\in \mathcal{O}_K/P$, and so $\alpha$ is a $q$th power in $\mathcal{O}_K/P$; that is, mod $P$.

On the other hand  $\zeta_qv=\sigma(v)\equiv v^{NP}  \pmod {\widetilde{P}}$ and so $v^{NP}\not\equiv v \pmod P$, which means that $v\not \in \mathcal{O}_K/P$ and also $\zeta_q^i v\not \in \mathcal{O}_K/P$ since each $\zeta_1^i\in  \mathcal{O}_K/P$. Therefore none of the $q$th roots of $\beta$ are  in $\mathcal{O}_K/P$ so that $\beta$ is not a $q$th power in $\mathcal{O}_K/P$, and so mod $P$.

The analogous proof works for the prime $r$.
 \end{proof}

\end{document}